\newsavebox{\sticka}
\savebox{\sticka}[4pt]{
\begin{tikzpicture}[x=1pt, y=1pt]
\draw[line width= 1] (0,0) -- (0,6); 
\path [draw, fill] (0,7.5) circle (1.5pt); 
\end{tikzpicture}
}
\newtheorem{theorem}{Theorem} [section]
\newtheorem{lemma}[theorem]{Lemma}
\newtheorem{proposition}[theorem]{Proposition}
\theoremstyle{definition}
\newtheorem{remark}[theorem]{Remark}
\newcommand{\noi}{\noindent}
\newcommand{\Z}{\mathbb{Z}}
\newcommand{\R}{\mathbb{R}}
\newcommand{\C}{\mathbb{C}}
\newcommand{\T}{\mathbb{T}}
\newcommand{\X}{ X^\alpha}
\renewcommand{\H}{\mathcal H}
\newcommand{\W}{\mathcal W}
\renewcommand{\u}{{\mathbf u}}
\renewcommand{\v}{{\mathbf v}}
\newcommand{\w}{{\mathbf w}}
\newcommand{\vect}[2]{\begin{pmatrix} #1 \\ #2 \end{pmatrix}}
\def\stick{\mathchoice
    {\raisebox{-1pt}{\usebox{\sticka}}}%
    {\raisebox{-1pt}{\usebox{\sticka}}}%
    {}%
    {}}
\let\Re=\undefined\DeclareMathOperator*{\Re}{Re}
\let\Im=\undefined\DeclareMathOperator*{\Im}{Im}
\newcommand{\prob}{\mathbb{P}}  % for probability measure P
\newcommand{\Pt}[1]
{\mathcal{P}_{#1}}
\newcommand{\Q}{\mathcal{Q}}
\newcommand{\E}{\mathbb{E}}
\renewcommand{\L}{\mathcal{L}}
\newcommand{\B}{\mathcal{B}}
\newcommand{\F}{\mathcal{F}}
\newcommand{\al}{\alpha}
\newcommand{\be}{\beta}
\newcommand{\dl}{\delta}
\newcommand{\nb}{\nabla}
\newcommand{\vp}{\varphi}
\newcommand{\Dl}{\Delta}
\newcommand{\ep}{\varepsilon}
\newcommand{\g}{\gamma}
\newcommand{\Ld}{\Lambda}
\newcommand{\s}{\sigma}
\newcommand{\ft}{\widehat}
\newcommand{\wt}{\widetilde}
\newcommand{\cj}{\overline}
\newcommand{\dt}{\partial_t}
\newcommand{\embeds}{\hookrightarrow}
\DeclareMathOperator{\Law}{Law}
\newcommand{\ta}{\theta}
\renewcommand{\l}{\ell}
\renewcommand{\o}{\omega}
\renewcommand{\O}{\Omega}
\newcommand{\les}{\lesssim}
\newcommand{\ges}{\gtrsim}
\newcommand{\jb}[1]
{\langle #1 \rangle}
\newcommand{\sbb}[1]
{[ #1 ]}
\newcommand{\snb}{\sbb{\nb}}
\newcommand{\ind}{\mathbf 1}
\renewcommand{\S}{\mathcal{S}}
\newcommand{\pa}{\partial}
\newcommand{\N}{\mathbb{N}}
\newcommand{\CC}{\mathcal{C}}
\newcommand{\EE}{\mathcal{E}}
\renewcommand{\H}{\mathcal{H}}
\newcommand{\Wal}{\mathcal{W}^{\al,\frac{2}{\al}}}
\newtheorem*{ackno}{Acknowledgements}
\newcommand{\eps}{\ep}
\newcommand{\Dr}{\Theta}
\newcommand{\U}{\mathcal{U}}
\def\<#1>{\xusebox{#1}}
\newcommand{\pe}{\mathbin{\scaleobj{0.7}{\tikz \draw (0,0) node[shape=circle,draw,inner sep=0pt,minimum size=8.5pt] {\scriptsize  $=$};}}}
\newcommand{\pl}{\mathbin{\scaleobj{0.7}{\tikz \draw (0,0) node[shape=circle,draw,inner sep=0pt,minimum size=8.5pt] {\scriptsize $<$};}}}
\newcommand{\pg}{\mathbin{\scaleobj{0.7}{\tikz \draw (0,0) node[shape=circle,draw,inner sep=0pt,minimum size=8.5pt] {\scriptsize $>$};}}}
\tikzset{
    position/.style args={#1:#2 from #3}{
        at=(#3.#1), anchor=#1+180, shift=(#1:#2)
    }
}
\numberwithin{equation}{section}
\numberwithin{theorem}{section}
\DeclareFontFamily{U}{matha}{\hyphenchar\font45}
\DeclareFontShape{U}{matha}{m}{n}{
      <5> <6> <7> <8> <9> <10> gen * matha
      <10.95> matha10 <12> <14.4> <17.28> <20.74> <24.88> matha12
      }{}
\DeclareSymbolFont{matha}{U}{matha}{m}{n}
\DeclareFontFamily{U}{mathx}{\hyphenchar\font45}
\DeclareFontShape{U}{mathx}{m}{n}{
      <5> <6> <7> <8> <9> <10>
      <10.95> <12> <14.4> <17.28> <20.74> <24.88>
      mathx10
      }{}
\DeclareSymbolFont{mathx}{U}{mathx}{m}{n}
\DeclareMathDelimiter{\vvvert}{0}{matha}{"7E}{mathx}{"17}
\begin{document}
\baselineskip = 14pt

\title[Quasi-invariance for SDNLW on $\T^{2}$]
{Quasi-invariance of the Gaussian measure for the two-dimensional stochastic cubic nonlinear wave equation}

\author[J.~Forlano and L.~Tolomeo]
{Justin Forlano and Leonardo Tolomeo}
\dedicatory{Dedicated to the memory of Professor Giuseppe Da Prato}

\address{
Justin Forlano\\ School of Mathematics\\
The University of Edinburgh\\
and The Maxwell Institute for the Mathematical Sciences\\
James Clerk Maxwell Building\\
The King's Buildings\\
Peter Guthrie Tait Road\\
Edinburgh\\ 
EH9 3FD\\
United Kingdom}

\email{j.forlano@ed.ac.uk}

\address{
Leonardo Tolomeo\\ School of Mathematics\\
The University of Edinburgh\\
and The Maxwell Institute for the Mathematical Sciences\\
James Clerk Maxwell Building\\
The King's Buildings\\
Peter Guthrie Tait Road\\
Edinburgh\\ 
EH9 3FD\\
United Kingdom}

\email{l.tolomeo@ed.ac.uk}

\subjclass[2020]{35L15, 60H15}

%\date{\today}

\keywords{stochastic nonlinear wave equation; invariant measure; white noise, quasi-invariance, gaussian measure, absolute continuity}

\begin{abstract}
We consider the stochastic damped nonlinear wave equation $\partial_t^{2}u+\partial_t u+u-\Delta u +u^{3} = \sqrt{2} {\langle{\nabla}\rangle^{-s}} \xi$ on the two-dimensional torus $\mathbb T^2$, where $\xi$ denotes a space-time white noise and $s>0$. 
We show that the measure $\vec{\mu}_s$ corresponding to the unique invariant measure for the flow of the associated linear equation is quasi-invariant under the nonlinear stochastic flow.  
\end{abstract}

\maketitle
\tableofcontents

\section{Introduction}

We consider here the following stochastic damped nonlinear wave equation (SDNLW): 
\begin{equation}
\begin{cases}\label{SDNLW}
\dt^{2}u+\dt u+u-\Delta u +u^{3}=\sqrt{2} \jb{\nb}^{-s} \xi \\
(u,\dt u)|_{t = 0} = (u_0,u_1)
\end{cases}
\qquad ( t, x) \in \R_{+} \times \T^{2}, 
\end{equation}
where $s>0$, $\xi$ is a space-time white noise, and $\jb{\nb}^{-s}$ denotes the smoothing operator 
\begin{equation}
\jb{\nb}^{-s}:=(1-\Dl)^{-\frac{s}{2}}. \label{Bs}
\end{equation}
In the following, we consider \eqref{SDNLW} as a first-order system in the variable $\u := (u, v)^{\top}$,
\begin{equation}
\begin{cases}
\partial_t \u =  \partial_t\begin{pmatrix} u \\ v \end{pmatrix}  = -\begin{pmatrix} 0 & -1 \\  1-\Delta & 1 \end{pmatrix} \begin{pmatrix} u \\ v \end{pmatrix}  - \vect{0}{u^3} +\vect{0}{\sqrt{2} \jb{\nb}^{-s}\xi}, \\ 
\u|_{t = 0} = \u_0 = \begin{pmatrix} u_0 \\ u_1 \end{pmatrix}.
\end{cases}
 \label{NLWvec}
\end{equation}

The interest for the equation stemmed from recent results pertaining to the PDE construction of the Euclidean $\Phi^4$ quantum field theory \cite{dpd, BG, BG2, gh1, gh2, mw2}.
Namely, if one considers the stochastic quantisation equation
\begin{equation} \label{SQE}
\partial_t u + u - \Delta u = u^3 + \sqrt2\xi, 
\end{equation}
as time goes to infinity, we expect a generic solution to \eqref{SQE} to converge to the (massive) $\Phi^4$-measure.

In the above context, the stochastic wave equation 
\begin{equation}\label{CSQE}
\partial_t^2 u + \partial_t u + u - \Delta u = u^3 + \sqrt2 \xi, 
\end{equation}
corresponds to the so-called canonical stochastic quantisation equation, or equivalently, to the kinetic Langevin equation for the $\Phi^4$-measure with momentum $v = \partial_t u$. Therefore, one can sample the $\Phi^4$-measure by sampling the law of the first component of the solution of \eqref{CSQE}. This procedure has the name of Hamiltonian Montecarlo (HCM). 
It has numerically been observed that in the finite dimensional setting, HCM converges faster than MCMC (Markov Chain Montecarlo) in many situations, see \cite{EGZ} (and references within), for a rigorous justification of these faster convergence rates.

As a consequence, equation \eqref{CSQE} has received a substantial amount of attention in recent years, as an alternative approach to stochastic quantisation of the $\Phi^4$ measure. However, due to the worse analytical properties of the wave propagator $(\partial_t^2 - \Delta)^{-1}$ compared to those of the heat propagator $(\partial_t - \Delta)^{-1}$, there are far fewer results available, namely, uniqueness and convergence to the invariant measure have been proven by the second author only in \cite{t18erg} for dimension $d = 1$ and in \cite{Tolomeo3} for dimension $d = 2$. Local and global well posedness results for related models, but without uniqueness of the measure, can be found in \cite{GKO, GKO2, t18wave, OOT1, OOT2, OWZ}. 

In light of the above discussion, we see the equation \eqref{SDNLW} as a version of \eqref{CSQE} in fractional dimension $d = 2-2 s$. A similar approach to fractional dimension has already been considered for the parabolic case of \eqref{SQE}, see for instance \cite{cmw}. 
In the work by the authors \cite{FT1}, it has been shown that equation \eqref{SDNLW} also has a unique invariant measure, to which we have convergence for every initial data belonging to a certain class. However, strictly speaking, the equation \eqref{SDNLW} is not a Langevin equation, because of the presence of the smoothing operator $\jb{\nabla}^{-s}$. 
This prevents us from writing an explicit formula for the invariant measure, which in turn leaves many open questions. A fairly natural question to ask is the following. Denoting by $\vec{\mu}_s$ the invariant measure for the linear equation 
$$ \dt^{2}u+\dt u+u-\Delta u =\sqrt{2} \jb{\nb}^{-s} \xi, $$
or more precisely, to the system 
\begin{equation*}
\begin{cases}
\partial_t \u =  \partial_t \begin{pmatrix} u \\ v \end{pmatrix}  = -\begin{pmatrix} 0 & -1 \\  1-\Delta & 1 \end{pmatrix} \begin{pmatrix} u \\ v \end{pmatrix}  +\vect{0}{\sqrt{2} \jb{\nb}^{-s}\xi}, \\ 
\u|_{t = 0} = \u_0 = \begin{pmatrix} u_0 \\ u_1 \end{pmatrix}.
\end{cases}
\end{equation*}
is it true that the invariant measure $\vec{\rho}_s$ of \eqref{SDNLW} satisfies $\vec{\rho}_s \ll \vec{\mu}_s$? As opposed to the invariant measure for the nonlinear equation, the measure $\vec{\mu}_s$ has an explicit formula, namely, it formally holds that 
\begin{align}
 \vec{\mu}_s = Z^{-1} \exp\Big(- \frac12 \|u\|_{H^{1+s}}^2 - \frac12 \|v\|_{H^s}^2\Big). \label{mus}
\end{align}
Therefore, this absolute continuity property would allow us to bootstrap relevant properties of the invariant measure from the analogous properties of $\vec{\mu}_s$. However, while the analogous of this statement is relatively simple to show for the parabolic equation \eqref{SQE}, for the wave case, it turns out to be a very difficult problem. Indeed, even in the case of the stochastic Navier-Stokes equation on $\T^2$,
\begin{equation}
\partial_t u = \Delta u - u \cdot \nabla u + \jb{\nabla}^{-s} \xi \label{SNSE},
\end{equation}
this question is still open since the original proof of the existence of the invariant measure in \cite{FM95, Fer97}. 
Indeed, the current state-of-the art is that one can show that the invariant measure is absolutely continuous with respect to the stationary law of the linear solution when $\Delta$ is replaced with $-(-\Delta)^{1+\eps}$ for some $\eps > 0$, see \cite{MS}.

A closely related question is the following: if $\Law(\u_0) = \vec{\mu}_s,$ is it true that $\Law(\u(t)) \ll \vec{\mu}_s$? If this happens, we say that the measure $\vec{\mu}_s$ is \emph{quasi-invariant} under the (stochastic) flow of \eqref{NLWvec}. In the parabolic case of \eqref{SQE} and \eqref{SNSE}, or more in general, whenever one has the strong Feller property, the two questions are actually equivalent. Unfortunately, the strong Feller property fails for equation \eqref{NLWvec}, and one cannot deduce absolute continuity of the invariant measure from quasi-invariance.

Quasi-invariance for the deterministic flow of Hamiltonian and dispersive PDEs in situations where the initial data is {not} distributed according to an invariant measure has been studied extensively in the last decade. This pursuit was initiated by Tzvetkov in \cite{Tz} for the BBM equation, with many results following for a number of other Hamiltonian PDEs, see \cite{Tz, OTz, OTz2, OST, OTT, PTV, FT, GOTW, STX, DT2020,  GLT1, GLT2, PTV2, BTh, FS, OS, FT2, ST, CT, Knezevitch} and references therein for a list of results in this direction. We point out the results in \cite{OTz2, GOTW, STX} prove quasi-invariance results in the case of the deterministic and undamped version of \eqref{SDNLW}.

Nevertheless, the question of quasi-invariance for stochastic dispersive models is yet to be answered.  
In this paper, we prove the first of such results; in particular, we show the following.  

\begin{theorem}\label{THM:QI}
For each $s>0$, let $\Phi_{t}(\u_0;\xi)$ denote the solution to \textup{SDNLW}~\ref{SDNLW} with $\Law(\u_0 )= \vec{\mu}_{s}$. Then for each $t\geq 0$, the probability measure $\textup{Law}(\Phi_{t}(\u_0;\xi))$ is absolutely continuous with respect to $\vec{\mu}_{s}$.
\end{theorem}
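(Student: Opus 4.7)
The plan is to prove the quasi-invariance via an (infinite-dimensional) Girsanov change of measure. Heuristically, we rewrite SDNLW as the \emph{linear} stochastic wave equation driven by the shifted noise $\tilde{\xi} := \xi - \tfrac{1}{\sqrt{2}}\jb{\nabla}^{s}(u^{3})$. If $\tilde{\xi}$ is itself a space-time white noise under some probability measure $\tilde{\mathbb{P}}$ equivalent to $\mathbb{P}$, then, interpreted under $\tilde{\mathbb{P}}$, the process $\u$ solves the linear SPDE with initial law $\vec{\mu}_s$, so invariance of $\vec{\mu}_s$ under the linear flow yields $\Law_{\tilde{\mathbb{P}}}(\u(t)) = \vec{\mu}_s$. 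Passing back to the original measure produces a density
\begin{equation*}
\frac{d\Law(\u(t))}{d\vec{\mu}_{s}}(w) = \mathbb{E}_{\tilde{\mathbb{P}}}\Big[\tfrac{d\mathbb{P}}{d\tilde{\mathbb{P}}}\,\Big|\,\u(t)=w\Big],
\end{equation*}
which is exactly the statement $\Law(\u(t))\ll\vec{\mu}_s$.

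Making this rigorous requires verifying an integrability condition for the Girsanov exponential associated with the drift $a(\tau):=\tfrac{1}{\sqrt{2}}\jb{\nabla}^{s}(u(\tau)^{3})$. The cleanest sufficient condition would be the Novikov-type criterion
\begin{equation*}
\mathbb{E}\Big[\exp\Big(\tfrac{1}{2}\int_{0}^{t}\|a(\tau)\|_{L^{2}_x}^{2}\,d\tau\Big)\Big]<\infty,
\end{equation*}
which amounts to a uniform exponential moment bound on $\int_0^t \|u(\tau)^{3}\|_{H^s}^2\,d\tau$ along the nonlinear flow. The natural strategy is to first perform the whole argument at the level of a Galerkin truncation at frequency $\leq N$: there the classical finite-dimensional Girsanov theorem applies without subtlety, and one obtains an explicit density $g_N$ of $\Law(\u_N(t))$ with respect to the pushforward $(\pi_N)_*\vec{\mu}_s$, expressed as a conditional expectation of the (inverse) Girsanov density $D_N$.

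The main obstacle I anticipate is obtaining the uniform-in-$N$ control on $g_N$ that allows passage to the limit. For large $s$ (say $s>1$), the random field $u$ is a continuous function under $\vec{\mu}_s$, and the exponential moment bound should follow from Sobolev embeddings combined with the stochastic energy estimates already developed in \cite{FT1}. For small $s>0$, however, $u^3$ must be renormalized (Wick-ordered), the drift becomes a highly singular random field, and the naive Novikov bound cannot hold in the form above. One would then need to exploit the Gaussian structure of the linear flow: a combination of hypercontractivity for the renormalized Wick cubes under $\vec{\mu}_s$, together with an energy-type identity along the nonlinear evolution, should suffice to control the cumulative growth of the renormalized cube in $H^s$ after subtracting the appropriate Ito-type counterterms inside the exponential.

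Once the uniform estimate on $g_N$ is secured, standard tightness/weak compactness arguments together with the convergence in law of the truncated nonlinear flow $\u_N(t)\to\u(t)$ on the support of $\vec{\mu}_s$ (which follows from the well-posedness and stability theory established in \cite{FT1}) allow one to extract a limit density $g$ for $\Law(\u(t))$ against $\vec{\mu}_s$, which completes the proof of Theorem~\ref{THM:QI}.
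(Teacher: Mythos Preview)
Your Girsanov approach has a fundamental obstruction that cannot be repaired by renormalisation or hypercontractivity. The drift you identify is $a(\tau) = \tfrac{1}{\sqrt{2}}\jb{\nabla}^{s} u(\tau)^{3}$, and Girsanov requires $a \in L^{2}_{t,x}$ almost surely, i.e.\ $u^{3} \in L^{2}_{t} H^{s}_{x}$. But under $\vec{\mu}_{s}$ (and along the flow) the first component $u$ lives in $C^{s-\varepsilon}$ and \emph{not} in $H^{s}$; by the fractional Leibniz rule the best one obtains is $u^{3} \in C^{s-\varepsilon}$, since the cubic power of the linear part inherits exactly the regularity of the linear part. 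Hence $\|u^{3}\|_{H^{s}} = +\infty$ almost surely. This is not a small-$s$ phenomenon: it fails for every $s > 0$, because the obstruction is the $\varepsilon$-gap between the regularity of $u$ and the target space $H^{s}$. The Novikov exponent is therefore $+\infty$ a.s., the stochastic integral in the Girsanov exponential is ill-defined, and no equivalent measure $\tilde{\mathbb{P}}$ exists. (A side remark: since $s>0$, the field $u$ is an honest continuous function and $u^{3}$ needs no Wick ordering; the issue is purely one of Sobolev regularity, not of definition.) The paper in fact notes, in the discussion of \cite{HKN}, that the nonlinear remainder $\Phi_{t} - \Phi_{t}^{\text{lin}}$ fails to lie in the Cameron--Martin space $\H^{1+s}$ of $\vec{\mu}_{s}$; this is exactly the spatial-measure manifestation of the obstruction you are running into on the noise side.

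The paper's route is genuinely different. It replaces $\vec{\mu}_{s}$ by an \emph{equivalent} measure $\vec{\nu}_{s}$ built from a modified energy $E_{s,N} = G_{s} + R_{s,N}$, with the correction $R_{s,N}$ chosen so that the generator applied to $e^{-E_{s,N}}$ produces not $\jb{\nabla}^{s}(u^{3})$ alone but the \emph{commutator} $\jb{\nabla}^{s}(u^{3}) - 3 u^{2}\jb{\nabla}^{s}u$ together with a renormalised term $Q_{s,N}(u)\,u$; see \eqref{HEbracket}. This commutator gains derivatives and is bounded in $L^{2}$ by $\|u\|_{C^{s-\varepsilon}}^{3}$ (Proposition~\ref{PROP:com}), which is precisely the cancellation your approach cannot see. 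The transported densities are then controlled via the Fokker--Planck equation, a short-time Taylor expansion on compact sets, and an iterative $L^{p}$ argument on the lifted product space $X^{\alpha}\times\mathcal{Z}$, after which one passes to the limit $N\to\infty$ by weak compactness.
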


\subsection{Outline of the proof}
The proof of Theorem \ref{THM:QI} will follow closely the strategy of the deterministic results in \cite{OTz2, GOTW, STX}, hence we first describe how one obtains quasi-invariance of the measure $\vec{\mu}_s$ for the deterministic equation 
\begin{equation} \label{det_wave}
\dt^2 u + (1 - \Delta) u + u^3 = 0.
\end{equation}
One can formally see this equation as {an} infinite-dimensional system of ODEs in the form 
$$ \dot y = b(y), $$
with $\text{div}(b) = 0$ on some infinite-dimensional vector space $X$. Therefore, denoting by $\Phi_t$ the flow of this system of ODEs, for a measure $\nu_0 = \exp\big( - E(x)\big)dx$ {and} for any Borel set $A$, one has that 
$$ \frac{d}{dt} \nu_0(\Phi_t(A)) = \int_{\Phi_t(A)} - \frac{d}{dt}\left. E(\Phi_t(x))\right|_{t=0} d\nu_0(x). $$
This allows to perform a Gronwall-type argument for the quantity $\nu_0(\Phi_t(E))$, from which one obtains that quasi-invariance holds whenever for every ball $B_R \subset X$, there exists some $\eta_R > 0$ such that
\begin{equation}
 \int_{B_X(0, R)} \exp\Big(\eta_R \big|\frac{d}{dt}\left. E(\Phi_t(x))\right|_{t=0}\big|\Big) d \nu_0 < \infty.  \label{exp_integrability}
\end{equation}
While not codified in this way, one can find a proof of this result in \cite{PTV}. {As} the condition \eqref{exp_integrability} does not hold for the measures $\vec{\mu}_s$, what is shown in \cite{OTz2, GOTW, STX} is that, for an appropriate range of $s>0$, if one defines the energy functional 
$$ E(u,v) = \frac12 \|u\|_{H^{1+s}}^2 - \frac12 \|v\|_{H^s}^2 - \frac32 \int \big(\jb{\nabla}^s u - \s\big) u^2   $$ 
with $\s = \E_{\vec{\mu}_s} \|\jb{\nabla}^s u\|_{L^2}^2$, then 
$$ \vec{\mu}_s \ll Z^{-1} \exp(- E(u,v)) dudv \ll \vec{\mu}_s ,  $$
and the property \eqref{exp_integrability} holds.\footnote{The expression above must be considered formally, as $\s=+\infty$. Nevertheless, one can define the measure $Z^{-1} \exp(- E(u,v)) dudv$ via a renormalisation procedure.}

In order to prove Theorem \ref{THM:QI}, we follow a similar approach in the stochastic setting. We define the measure 
$$ \vec{\nu}_s \sim \exp\Big( - \frac12 \|u\|_{H^{1+s}}^2 - \frac12 \|v\|_{H^s}^2 - \frac32 \int \big(\jb{\nabla}^s u - \s\big) u^2 \Big)dudv, $$
and we try to show that under a condition analogous to \eqref{exp_integrability}, we can deduce quasi-invariance of the measure $ \vec{\nu}_s $ in the sense of Theorem \ref{THM:QI}. Indeed, in Section 5, we first prove that the measure $\vec{\nu}_s$  satisfies the alternative condition
\begin{equation} \label{exp_integrability_2}
 \int_{B_X(0, R)} \exp\Big(\eta_R \big|\frac{d}{dt}\left. \E_\xi\big[E(\Phi_t(\u_0, \xi))\big]\right|_{t=0}\big|\Big) d \nu_0(\u_0) < \infty, 
\end{equation}
and then we prove that this condition can indeed replace \eqref{exp_integrability} in the stochastic setting, thus completing the proof.
While the argument is specialised to equation \eqref{SDNLW}, we believe that a similar condition can be proven more in general. 

%$$f_t := \frac{d (\Phi_t)_\ast \nu_0}{d \nu_0}$$
%solves the transport equation 
%$$ \dt f_t = - b \cdot \nabla f_t - \frac{b\cdot \nabla \nu_0}{\nu_0} f_t= - b \cdot \nabla f_t  -\Big( \frac{d}{dt}\left. E(\Phi_t(x))\right|_{t=0}\Big)  f_t.$$

%\begin{theorem}\label{THM:QI}
%For each $s>0$, let $\Phi_{t}(\u_0;\xi)$ denote the solution to SDNLW~\ref{SDNLW} with $\textup{Law}(\u_0 )= \vec{\mu}_{s}$. Then for each $t\geq 0$, the probability measure $\textup{Law}(\Phi_{t}(\u_0;\xi))$ is absolutely continuous with respect to $\vec{\mu}_{s}$.
%\end{theorem}

%\begin{theorem}\cite{FT1}\label{THM:erg}
%Given $s > 0$ and $\g \in \R$, there exists a measure $\rho_s$ concentrated on a suitable Banach space $\X$ which is invariant for the Markov process associated to the flow $\Phi_t(\cdot,\xi)$ of \eqref{NLWvec}, in the sense that, for every measurable and bounded
%function $F:X^{\al}\to \R$,
%\begin{align*}
%\int \E[ F(\Phi_{t}(\mathbf{u}_0, \xi))]d\rho_{s}(\u_0)=\int F(\u_0)d\rho_{s}(\u_0), \,\,\,\, \text{for every} \,\, t>0.
%\end{align*}
%Moreover, $\rho_{s}$ is the only invariant measure concentrated on $\X$.
%\end{theorem} 

\subsection{Further remarks}
\begin{remark}\rm
In the proof of the condition \eqref{exp_integrability_2}, we follow a slightly different approach from the cited works \cite{OTz2, STX}. This allows us to prove that the estimate holds in the full non-singular regime $s >0$. When applied to the deterministic wave equation \eqref{det_wave}, this allows us to show the estimate \eqref{exp_integrability}, and thus quasi-invariance of $\vec{\mu}_s$, in the regime $s>0$, thus improving the result in \cite{STX}, which instead requires $s > \frac 52$. 
\end{remark}

\begin{remark}\rm 
In the deterministic case \eqref{det_wave}, it is in principle possible to write an explicit formula for the Radon-Nikodym derivative $\frac{d (\Phi_t)_\ast \vec{\mu}_s}{d \vec{\mu}_s}$. While this formula has not been used in the context of wave equations, for other deterministic models, it allowed to improve quasi-invariance results. See \cite{DT2020, FS, FT2, GLT1, GLT2, CT, Knezevitch} for some examples. However, in the stochastic case, we do not have such an explicit formula for the density, as we cannot solve the Fokker-Plank equation \eqref{FP} explicitly.
\end{remark}

\begin{remark}\rm
A very recent result by Hairer-Kusuoka-Nagoji \cite{HKN} shows the reverse of Theorem \ref{THM:QI} for a wide class of parabolic equations, i.e.\ that under certain assumptions, the invariant measure for {singular} parabolic SPDEs is singular with respect to the law of the stationary linear solution. 
Interestingly, the main assumption of their result, i.e.\ the fact that $\Phi_t - \Phi_t^{\text{lin}}$ does not belong to the Cameron-Martin space for $\vec{\mu_s}$, also applies to equation \eqref{SDNLW}. Nevertheless, the result of this paper shows that the measure is quasi-invariant. The discrepancy between these results is due to the following.
\begin{itemize}
\item[-] The result in \cite{HKN} heavily relies on the fact that the nonlinearity of the equation is not an $L^1_{\text{loc}}$ function when computed in the linear solution, and 
\smallskip

\item[-] The dispersive/Hamiltonian structure of the wave equation introduces further cancellations that are not present in the parabolic case. 
\end{itemize}

\end{remark}
\section{Preliminaries}

\noi
{\bf Notations.}
For a function $f: \T^2 \sim [0,1]^2 \to \C$, its Fourier series is given by $\widehat f: \Z^2 \to \C$, where
\begin{equation*}
\widehat f (n) := \int_{\T^2} f(x) e^{-2\pi i n \cdot x} d x \quad \text{and} \quad f(x) := \sum_{n\in\Z^2} \widehat f(n) e^{2\pi i n \cdot x}.
\end{equation*}
Moreover, for a function $F: \C^2 \to \C$, the expression $F(\nabla)u$ will denote the distribution with Fourier coefficients given by
\begin{equation*}
\widehat{F(\nabla) u}(n) := F(2 \pi i n) \widehat u (n).
\end{equation*}
For $x \in \R^2$, we denote $\jb{x} := \big(1 + |x|^2\big)^\frac 12 $.
%\quad \text{and} \quad  \jb{x}=(1+|x|^2)^{\frac 12}.
%In order to make many of the computations of this paper rigorous, it is necessary to consider a suitable finite dimensional truncation of \eqref{NLWvec}. 
Given $N\in \N_{0}:=\N\cup \{0\}$, we define $\Pi_{\leq N}$ as the sharp projection onto frequencies in the square $[-N,N]^{2}$, that is, 
\begin{align*}
\Pi_{\leq N}u(x)= \frac{1}{(2\pi)^{2}} \sum_{\substack{n\in \Z^{2}\\ |n|_{\infty}\leq N}} \ft{u}(n)e^{in\cdot x},
\end{align*}
where for $n=(n_1,n_2)$, $|n|_{\infty}:= \max_{j}|n_j|$,
and similarly, when $\u$ is vector valued,
\begin{align*}
\Pi_{\leq N}\u(x)= ( \Pi_{\leq N} \pi_1 \u (x), \Pi_{\leq N} \pi_2 \u(x)).
%\frac{1}{(2\pi)^{2}} \sum_{\substack{n\in \Z^{2}\\ |n|_{\infty}\leq N}} \ft{\u}(n)e^{in\cdot x}.
\end{align*}
We also define $\Pi_{>N}:=\text{Id}-\Pi_{\leq N}$ and for $N=-1$, extend these projections to $\Pi_{\leq -1}=0$ and $\Pi_{>-1}=\text{Id}$. As $\Pi_{\le N}$ is the Fourier restriction to a cube with sides of length $2N+1$, we can rewrite $\Pi_{\le N}$ as the composition of the Hilbert transform on each of the variables. Thus, from the boundedness of the Hilbert transform on $L^p(\T)$ for $1 < p < \infty$, we obtain
$$ \| \Pi_{\le N} u\|_{L^p(\T^2)} \les \|u \|_{L^p(\T^2)}, $$
 uniformly in the parameter $N$ and for $1< p<\infty$. When $N=+\infty$, we set $\Pi_{\leq +\infty}:=\text{Id}$.
 
 Given a vector $\u=(u,v)^{\top}$, we use $\pi_1$ and $\pi_2$ to denote the projection onto the first and second components of $\u$, respectively. Namely, $\pi_1 \u=u$ and $\pi_2 \u =v$.

%We recall the following version of the fractional Leibniz inequality, which will be useful throughout the paper.
%\begin{lemma}[Lemma 3.4, \cite{GKO}]\label{LEM:fraclieb}
%Let $0\leq \s\leq 1$. Suppose that $1<p_j, q_j ,r<\infty$, $\frac{1}{p_{j}}+\frac{1}{q_j}=\frac{1}{r}$, $j=1,2$. Then, we have 
%\begin{align}
%\| \jb{\nb}^{\s}(fg)\|_{L^{r}(\T^2)}\les \| \jb{\nb}^{\s}f\|_{L^{p_1}(\T^2)}\|g\|_{L^{q_1}(\T^2)}+\|f\|_{L^{p_2}(\T^2)}\|\jb{\nb}^{\s}g\|_{L^{q_2}(\T^2)}. \label{fraclieb}
%\end{align}
%\end{lemma}

\subsection{Function spaces}
We define the  Sobolev spaces $W^{\al,p}$ and $\W^{\al,p}:=W^{\al,p}\times W^{\al-1,p}$ via the norms
\begin{align*}
\| u\|_{W^{\al,p}}:= \| \jb{\nabla}^\al u\|_{L^p} \quad \text{and} \quad \| \u \|_{\W^{\al, p}}^p := \| \jb{\nabla}^\al u\|_{L^p}^p +  \| \jb{\nabla}^{\al-1}v\|_{L^p}^p,
\end{align*}
with the usual modification when $p=\infty$. Moreover, when $p = 2$, we write $\H^\al := \W^{\al,2}$.
With this definition and \eqref{linsol}, it holds that for every $t \in \R$,
\begin{align}
\| S(t)\u \|_{\H^{\al}} \les e^{-\frac{t}{2}}\| \u \|_{\H^{\al}}. \label{SonH}
\end{align}

Following \cite{FT1}, for $0<\al < 1$, we define the space 
\begin{align*}
\overline{X}^{\al}:=\{ \u \in \H^\al \, : \, S(t)\u \in C([0,+\infty); \W^{\al, \frac{2}{\al}}), \, \|S(t)\u\|_{\Wal}\les e^{-\frac{t}{8}} \},
\end{align*}
with the norm
  \begin{align}
  \| \u \|_{\X}:=\sup_{t\ge0} \, e^{\frac{t}{8}}\|S(t)\u\|_{\W^{\al,\frac{2}{\al}}}.
  \label{Xalpha}
\end{align}
 We define $X^{\al}$ to be the closure of trigonometric polynomials in $\overline{X}^{\al}$ under the norm $\|\cdot \|_{\X}$, which makes $(\X, \| \cdot\|_{\X})$ into a Polish space.
Note that for any $t\geq 0$, 
\begin{align}
\| S(t)\|_{X^{\al}\mapsto \X}\leq e^{-\frac{t}{8}}. \label{S(t)onX}
\end{align} 
Moreover the following embeddings hold (see \cite{FT1}):
\begin{align*}
\H^{1} \embeds X^{\al} \embeds \mathcal{W}^{0,6}.
\end{align*}
When $\al \geq 1$, we simply define $X^{\al}:=\H^{\al}$, and, when $\al>1$, the embedding $\H^{\al}\embeds L^{\infty}$ implies that $X^{\al}$ is an algebra. 

Similar to the spaces $\cj{X}^{\al}$ and $\X$, given $\al>0$, we let
\begin{align*}
\overline{Z}^{\al}:=\{ \u \in \H^{\al} \, :\, S(t)\u \in C([0,+\infty); \CC^{\al}), \,\, \| S(t)\u\|_{\CC^{\al}}\les e^{-\frac{t}{8}}\, \},
\end{align*}
and define $Z^{\al}$ to be the closure of trigonometric polynomials in $\cj{Z}^{\al}$ under the norm 
\begin{align}
\|\u\|_{Z^{\al}}=\sup_{t\geq 0} e^{\frac{t}{8}}\|S(t)\u\|_{\CC^{\al}}. \label{Znorm}
\end{align}
Now, $(Z^{\al},\|\cdot\|_{Z^{\al}})$ is a Polish space (see \cite[Lemma 1.2]{t18erg}) and $Z^{\al}\embeds X^{\al}$.

%We recall some compactness properties related to these spaces. Proofs of these can be found in \cite[Section 2]{FT1}.

%\begin{lemma}
%Let $1<p,q,r<\infty$ and $s\geq 0$ such that $\frac{1}{p}+\frac{1}{q}\leq \frac{1}{r}+\frac{s}{2}$. Then, 
%\begin{align}
%\|fg \|_{W^{-s,r}(\T^2)} \les \|f\|_{W^{-s,p}(\T^2)} \|g\|_{W^{s,q}(\T^2)}. \label{negprod}
%\end{align}
%\end{lemma}
%
%As a consequence, we have the following product estimate.
%
%\begin{lemma} Let $s>0$ and $0<\eps<\min(s,1)$. Then,
%\begin{align}
%\|f g\|_{H^{-s}} \les \|f\|_{W^{-\frac{\eps}{2}, \infty}}\|g\|_{H^{\eps}}.
%\end{align}
%\end{lemma}
%\begin{proof}
%We use \eqref{negprod} with $\dl=\frac{\eps}{2}$, $p=\frac{1}{\dl}$ and $q=\frac{2}{1+\dl}$. We have
%\begin{align*}
%\|fg\|_{H^{-s}} \leq \|fg\|_{H^{-\dl}}\les \|f\|_{W^{-\dl, \infty}} \|g\|_{W^{\dl, \frac{2}{1-\dl}}} \les  \|f\|_{W^{-\dl, \infty}}  \| g\|_{H^{2\dl}}.
%\end{align*}
%where the last inequality follows by Sobolev embedding. 
%\end{proof}

We define Littlewood-Payley projections as follows.
Let $\phi:\R\to [0,1]$ be a smooth bump function supported on $[-\frac 85, \frac 85]$ and $\phi =1$ on $[-\frac 54, \frac 54]$. For $\xi\in \R^2$, we set $\phi_{0}(\xi):=\phi(|\xi|)$ and $\phi_{j}(\xi) = \phi_{0}(2^{-j}\xi)-\phi_0 (2^{-j+1}\xi)$ for $j\in \N$. For $j\in \N_{0}$, we define the Littlewood-Payley projector $P_{N}$ as the Fourier multiplier operator with symbol
\begin{align*}
\vp_{N}(\xi) = \frac{\phi_{N}(\xi)}{\sum_{\l\in \N_0} \phi_{\l}(\xi)},
\end{align*}
where $N:=2^{j}$. 
This gives a decomposition of a distribution 
\[ f = \sum_{N\in 2^{\N_0}} P_N f.\]
We recall the definition of the Besov spaces $B^{\al}_{p,q}(\T^2)$ which are defined by norm:
\begin{align}
\| u\|_{B^{\al}_{p,q}} = \big\| N^{\al} \|P_{N} u\|_{L^{p}_{x}}  \big\|_{\l^{q}_{N}}.
\end{align}
We have $B^{\al}_{2,2}=H^{\al}$ for any $\al\in \R$, and we define the Besov-H\"{o}lder spaces $C^{\al}:=B^{\al}_{\infty,\infty}$. We also define vector versions of these spaces as $\CC^{\al}:=C^{\al}\times C^{\al-1}$ with the norm 
\begin{align*}
\| \u \|_{\CC^{\al}} =  \|\pi_1 \u\|_{C^{\al}}+\| \pi_2 \u\|_{C^{\al-1}}.
\end{align*}
The Besov spaces have the following properties. See for instance \cite{BCD}.

\begin{lemma}
The following estimates hold.\\
\textup{(i)} \textup{(interpolation)} For $0<s_1<s_2$ and $\ta =\frac{s_1}{s_2}$, it holds that
\begin{align}
\| u\|_{H^{s_1}} \les \|u\|_{H^{s_2}}^{\ta}\|u\|_{L^2}^{1-\ta}.
\end{align}
\textup{(ii)} \textup{(embeddings)} Let $s_1,s_2\in \R$ and $p_1,p_2, q_1,q_2\in [1,\infty]$. Then,
\begin{align}
\begin{split}
 \|u\|_{B^{s_1}_{p_1,q_1}} & \les \| u\|_{B^{s_2}_{p_2,q_2}} \quad \text{for} \,\, s_1 \leq s_2, p_1\leq p_2,\,\, \text{and} \,\, q_1\geq q_2, \\
 \|u\|_{B^{s_1}_{p_1,q_1}} &\les \|u\|_{B^{s_2}_{p_1,\infty}} \quad \text{for} \,\, s_1<s_2,\\
 \|u\|_{B^{0}_{p_1,\infty}} &\les \|u\|_{L^{p_1}} \les \|u\|_{B^{0}_{p_1,1}}.
\end{split} \label{Besovembeds}
\end{align}
\textup{(iii)} \textup{(algebra property)} For any $\s>0$, 
\begin{align}
\| uv \|_{C^{\s}} \les \|u\|_{C^\s}\|v\|_{C^{\s}}. \label{algebra}
\end{align}
\textup{(iv)} \textup{(duality)} Let $\s \in \R$ and $p,p',q,q'\in [1,\infty]$ such that $\frac{1}{p}+\frac{1}{p'}=\frac{1}{q}+\frac{1}{q'}=1$. Then,
\begin{align}
\bigg|\int_{\T^2}uv dx \bigg| \leq \|u\|_{B^{\s}_{p,q}}\|v\|_{B^{-\s}_{p',q'}}. \label{duality}
\end{align}
\textup{(iv)} \textup{(fractional Leibniz rule)} Let $p,p_1,p_2,p_3,p_4\in [1,\infty]$ such that $\frac{1}{p_1}+\frac{1}{p_2}=\frac{1}{p_3}+\frac{1}{p_4}=\frac{1}{p}$. Then, for all $\s>0$, we have
\begin{align}
\|uv\|_{B^{\s}_{p,q}} \les \|u\|_{B^{\s}_{p_1,q}}\|v\|_{L^{p_2}} + \|u\|_{L^{p_3}}\|v\|_{B^{\s}_{p_4,q}}. \label{Leibniz}
\end{align}
\textup{(iv)} \textup{(Sobolev embedding)} Let $1\leq p_2\leq p_1\leq \infty$, $q\in [1,\infty]$, and $s_2 \geq s_1+ 2(\frac{1}{p_2}-\frac{1}{p_1})$. Then, 
\begin{align}
\| u\|_{B^{s_1}_{p_1,q}} \les \|u\|_{B^{s_2}_{p_2,q}}. \label{Sobolev}
\end{align}
\end{lemma}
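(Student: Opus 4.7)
These are all standard estimates from the Besov-space toolbox, and my plan is to sketch the core arguments, all of which can be extracted from \cite{BCD}. The key ingredients are Plancherel's theorem, H\"older's inequality, Bernstein's inequality $\|P_N f\|_{L^{p_2}} \les N^{2(1/p_1-1/p_2)}\|P_N f\|_{L^{p_1}}$ valid for $p_1 \leq p_2$, the orthogonality of the $P_N$, and Bony's paraproduct decomposition $uv = T_u v + T_v u + R(u,v)$, where $T_u v := \sum_N P_{\leq N-2} u \cdot P_N v$ and $R(u,v) := \sum_{|N-M|\leq 1} P_N u \cdot P_M v$.

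For (i), I would write $\jb{n}^{2s_1} = (\jb{n}^{2s_2})^{\ta}$ with $\ta = s_1/s_2$, apply H\"older on the Fourier side with exponents $1/\ta$ and $1/(1-\ta)$, and invoke Plancherel. For the embeddings in (ii), the first chain follows by combining Bernstein (to control the $p_1 \leq p_2$ direction) with the trivial nestedness of $\ell^q$-norms and the monotonicity in $s$; the second is a geometric-series argument bounding $\|N^{s_1}\|P_N u\|_{L^{p_1}}\|_{\l^{q_1}_N}$ by $\big(\sup_N N^{s_2}\|P_N u\|_{L^{p_1}}\big)\cdot \|N^{s_1 - s_2}\|_{\l^{q_1}_N}$, which is finite thanks to the strict inequality $s_1<s_2$; and the third is the classical Littlewood-Paley characterisation of $L^{p_1}$. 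The Sobolev embedding at the end is similarly an immediate consequence of Bernstein applied blockwise, after multiplying by $N^{s_1}$ and using $s_2\geq s_1+2(1/p_2-1/p_1)$.

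For the algebra property (iii) and the fractional Leibniz rule, I would invoke the paraproduct decomposition and estimate each piece. For each paraproduct contribution, H\"older gives $\|P_K(P_{\leq N-2}u \cdot P_N v)\|_{L^p}\leq \|P_{\leq N-2}u\|_{L^{p_3}}\|P_N v\|_{L^{p_4}}$, which yields the Leibniz bound after summation in $N$ weighted by $N^{\s}$ and after noting that only $K \sim N$ contributes to a given block. The resonant term $R(u,v)$ requires the hypothesis $\s>0$ to sum a geometric series, since $P_K(P_N u \cdot P_M v) = 0$ unless $K \les \max(N,M)$, and one writes $K^\s \les \max(N,M)^\s$ and redistributes the $\s$ powers to $u$ and $v$ before summing. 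The duality statement (iv) follows by expanding $\int uv$ in Littlewood-Paley blocks, using near-orthogonality $\int P_N u \cdot P_M v = 0$ unless $|N - M|\leq 2$, and applying H\"older in $L^p$ on each block together with $\ell^q$-$\ell^{q'}$ duality in $N$.

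The only genuinely delicate step is the treatment of the resonant term, which is where the assumption $\s > 0$ enters; the remaining estimates are essentially consequences of H\"older and Bernstein applied block by block. Since these results are entirely classical, I expect in practice to simply cite \cite{BCD} rather than reproduce the proofs in full in the paper.
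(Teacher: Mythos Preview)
Your proposal is correct and aligns with the paper's treatment: the paper does not give a proof of this lemma at all, simply prefacing it with ``See for instance \cite{BCD}.'' Your sketches of the standard arguments are accurate, but in the end you and the paper do the same thing---defer to \cite{BCD}.
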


 \noi
Given two functions $f$ and $g$ on $\T^2$, we can expand the product $fg$ as
\begin{align}
fg & \hspace*{0.3mm}
= f\pl g + f \pe g + f \pg g\notag \\
& := \sum_{N < \frac{1}{2}M} P_{N} f \, P_M g
+ \sum_{\frac{1}{2}M\leq N \leq 2M} P_{N} f\,  P_M g
+ \sum_{M<\frac{1}{2}N} P_{N} f\,  P_M g.
\label{para1}
\end{align}
We will use the shorthand $N\ll M$ for $N<\frac{1}{2}M$ and $N\sim M$ for $\frac{1}{2}M \leq N \leq 2M$.

\begin{lemma}\label{LEM:para} 
Let $s_1, s_2 \in \R$ and $1 \leq p, p_1, p_2, q \leq \infty$ such that 
$\frac{1}{p} = \frac 1{p_1} + \frac 1{p_2}$.
Then, we have 
\begin{align}
\| f\pl g \|_{B^{s_2}_{p, q}} \les 
\|f \|_{L^{p_1}} 
\|  g \|_{B^{s_2}_{p_2, q}}.  
\label{para2a}
\end{align}

\noi
When $s_1 < 0$, we have
\begin{align}
\| f\pl g \|_{B^{s_1 + s_2}_{p, q}} \les 
\|f \|_{B^{s_1 }_{p_1, q}} 
\|  g \|_{B^{s_2}_{p_2, q}}.  
\label{para2}
\end{align}

\noi
When $s_1 + s_2 > 0$, we have
\begin{align}
\| f\pe g \|_{B^{s_1 + s_2}_{p, q}} \les 
\|f \|_{B^{s_1 }_{p_1, q}} 
\|  g \|_{B^{s_2}_{p_2, q}}  .
\label{para3}
\end{align}
In particular, if $s_1<0<s_2$ and $s_1+s_2>0$, the mapping $(f,g)\to fg$ extends to a continuous bilinear map from $B^{s_1}_{p_1,q}\times B^{s_2}_{p_2,q}$ to $B^{s_1+s_2}_{p,q}$.
%\noi
%\textup{(ii)}
%Let $s_1 <  s_2 $ and $1\leq p, q \leq \infty$.
%Then, we have 
%\begin{align} 
%\| u \|_{B^{s_1}_{p,q}} 
%&\les \| u \|_{W^{s_2, p}}.
%\label{embed}
%\end{align}
\end{lemma}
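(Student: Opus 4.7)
The plan is to prove the three paraproduct inequalities \eqref{para2a}, \eqref{para2}, \eqref{para3} separately via Littlewood--Paley analysis, H\"older's inequality, and discrete Young-type inequalities on the dyadic scale, and then to deduce the final bilinear extension by invoking the Bony decomposition $fg = f\pl g + f\pe g + f\pg g$. The backbone of all three estimates is the Fourier support localisation of dyadic products: for $N \ll M$ (resp.\ $M \ll N$), the Fourier support of $P_N f\cdot P_M g$ lies in an annulus of radius $\sim M$ (resp.\ $\sim N$), so $P_K$ of this product vanishes unless $K \sim \max(N,M)$; for $N \sim M$, the product is supported in a ball of radius $\sim N$, and $P_K$ vanishes unless $K \lesssim N$.

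For \eqref{para2a}, the localisation gives $P_K(f\pl g) = \sum_{M \sim K} P_K\big(Q_{<M}f \cdot P_M g\big)$ with $Q_{<M} := \sum_{N \ll M}P_N$. Using $L^p$-boundedness of $P_K$, H\"older's inequality with $\frac1p=\frac1{p_1}+\frac1{p_2}$, and the uniform $L^{p_1}$-boundedness of $Q_{<M}$ (a direct consequence of the Hilbert transform bounds recalled in the preliminaries), we obtain $\|P_K(f\pl g)\|_{L^p} \lesssim \|f\|_{L^{p_1}}\|P_M g\|_{L^{p_2}}$ for each $M \sim K$; multiplying by $K^{s_2}$ and taking the $\ell^q_K$-norm yields \eqref{para2a}. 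For \eqref{para2}, writing $a_N := N^{s_1}\|P_Nf\|_{L^{p_1}}$ and $b_M:=M^{s_2}\|P_M g\|_{L^{p_2}}$, the same setup produces
\[ K^{s_1+s_2}\|P_K(f\pl g)\|_{L^p} \lesssim \sum_{M\sim K}\sum_{N\ll M}\Big(\frac{N}{M}\Big)^{-s_1}a_N b_M. \]
Since $-s_1>0$, the dyadic kernel $(N/M)^{-s_1}\mathbf 1_{\{N\leq M\}}$ is $\ell^1$-summable in the relative dyadic index $\log M-\log N$; viewing the sum as a discrete convolution and applying Young's inequality in $\ell^q$ closes the estimate.

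For \eqref{para3}, the ball-type Fourier support analysis gives
\[ K^{s_1+s_2}\|P_K(f\pe g)\|_{L^p} \lesssim \sum_{N \gtrsim K} \Big(\frac{K}{N}\Big)^{s_1+s_2} a_N b_N, \]
and now the assumption $s_1+s_2>0$ is exactly what makes the kernel $(K/N)^{s_1+s_2}\mathbf 1_{\{N\gtrsim K\}}$ an $\ell^1$ convolution kernel in the dyadic index. Combining discrete Young with the sequence inequality $\|a_N b_N\|_{\ell^q_N} \leq \|a\|_{\ell^q}\|b\|_{\ell^q}$ (valid since $\ell^q\subset\ell^\infty$) completes the proof.

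For the bilinear extension, the first two pieces $f\pl g$ and $f\pe g$ of the Bony decomposition are controlled in $B^{s_1+s_2}_{p,q}$ by \eqref{para2} and \eqref{para3} respectively. The third piece $f \pg g = g \pl f$ is handled by applying \eqref{para2} with the roles of the two arguments reversed; its hypothesis that the low-frequency factor has negative Besov index is not satisfied by $g$, so one first uses the embedding \eqref{Besovembeds} (which is available because $s_2>0$) to trade the regularity of $g$ for a negative one before invoking \eqref{para2}. Summing the three bounds yields the desired continuous bilinear map $B^{s_1}_{p_1,q}\times B^{s_2}_{p_2,q}\to B^{s_1+s_2}_{p,q}$. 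The main technical obstacle throughout is the dyadic bookkeeping in the discrete Young step for \eqref{para2} and \eqref{para3}, where one must check that the sign conditions $s_1<0$ and $s_1+s_2>0$ are precisely what place the dyadic kernels $(N/M)^{-s_1}$ and $(K/N)^{s_1+s_2}$ into $\ell^1$.
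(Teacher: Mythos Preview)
The paper does not give a proof of this lemma; it simply refers to \cite{BCD}. Your plan for the three paraproduct estimates \eqref{para2a}, \eqref{para2}, \eqref{para3} is exactly the standard Bony--Littlewood--Paley argument found there, and it is correct. One minor point: the uniform $L^{p_1}$-boundedness of $Q_{<M}$ does not come from the Hilbert-transform bound quoted in the preliminaries (that one is for the sharp projector $\Pi_{\le N}$ and only gives $1<p<\infty$); rather, $Q_{<M}$ is a smooth Fourier multiplier and is bounded on every $L^{p_1}$, $1\le p_1\le\infty$, by a kernel estimate.

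There is, however, a genuine gap in your treatment of the bilinear extension. For the piece $f\pg g = g\pl f$, the high-frequency factor is $f$, whose regularity is $s_1$. Your proposal is to embed $g\in B^{s_2}_{p_2,q}$ into a negative-index space and then apply \eqref{para2}; but that yields only
\[
\|g\pl f\|_{B^{s_1-\varepsilon}_{p,q}}\lesssim \|g\|_{B^{-\varepsilon}_{p_2,q}}\|f\|_{B^{s_1}_{p_1,q}},
\]
which is \emph{weaker} than the target space $B^{s_1+s_2}_{p,q}$ since $s_1-\varepsilon<s_1<s_1+s_2$. In fact no argument can do better: taking $g\equiv 1$ gives $f\pg g = f-(\text{finitely many low modes})$, which generically lies in $B^{s_1}_{p_1,q}$ but not in $B^{s_1+s_2}_{p,q}$. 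The ``In particular'' clause as stated (with target $B^{s_1+s_2}_{p,q}$) is therefore a misprint; the correct target is $B^{s_1}_{p,q}$, and this is all that is actually used in the paper's subsequent applications (e.g.\ the proofs of \eqref{prod1}--\eqref{prod2}). With target $B^{s_1}_{p,q}$, the piece $f\pg g=g\pl f$ is handled directly by \eqref{para2a} together with the embedding $B^{s_2}_{p_2,q}\hookrightarrow B^{0}_{p_2,1}\hookrightarrow L^{p_2}$ (valid since $s_2>0$), with no need to invoke \eqref{para2}.
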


\noi
See \cite{BCD} for proofs in the non-periodic case.
We then have some useful consequences.

\begin{lemma}
Let $0<\al<s$ and $0<\dl<2(1-s+\al)$. Then,
\begin{align}
\|fg\|_{H^{-s}} \les \|f\|_{H^{-s+\al}}\|g\|_{C^{s-\al+\dl}}, \label{prod1}\\
\|fg\|_{H^{-s+\al}} \les \|f\|_{H^{s-\al+\dl}}\|g\|_{C^{-s+\al}}. \label{prod2}
\end{align}
\end{lemma}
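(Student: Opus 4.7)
The plan is to decompose both products via the paraproduct formula \eqref{para1}, $fg = f\pl g + f\pe g + f\pg g$, and to estimate each piece using Lemma~\ref{LEM:para} combined with the Besov embeddings from \eqref{Besovembeds}. In each case, the guiding idea is to produce a bound in an auxiliary Besov space $B^{\s'}_{2, \infty}$ with $\s'$ strictly larger than the target Sobolev index $\s$ ($-s$ for \eqref{prod1}, $-s+\al$ for \eqref{prod2}); the middle line of \eqref{Besovembeds} then provides the required embedding $B^{\s'}_{2, \infty} \hookrightarrow H^\s = B^\s_{2, 2}$.

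For \eqref{prod1}, I first apply \eqref{para2} to $f \pl g$ with $s_1 = -s + \al < 0$, $s_2 = s - \al + \dl$, $p = p_1 = 2$, $p_2 = \infty$, $q = \infty$. After using $H^{-s+\al} \hookrightarrow B^{-s+\al}_{2, \infty}$, this places $f \pl g \in B^{\dl}_{2, \infty}$ with norm at most $\|f\|_{H^{-s+\al}} \|g\|_{C^{s-\al+\dl}}$. The same choice of exponents applied via \eqref{para3} (legal since $s_1 + s_2 = \dl > 0$) handles $f \pe g$. For $f \pg g = g \pl f$, I use \eqref{para2a} together with $\|g\|_{L^\infty} \les \|g\|_{C^{s-\al+\dl}}$ (coming from $s-\al+\dl > 0$) to land in $B^{-s+\al}_{2, \infty}$. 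Since $\dl, -s+\al > -s$, all three pieces embed into $H^{-s}$.

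For \eqref{prod2}, the middle piece $f \pe g$ and the high-low piece $f \pg g = g \pl f$ are treated analogously: \eqref{para3} with $s_1 + s_2 = \dl > 0$ controls the former, and \eqref{para2} applied with the negative exponent $s_1 = -s + \al$ carried by $g$ controls the latter, both landing in $B^{\dl}_{2, \infty} \hookrightarrow H^{-s+\al}$. The main obstacle I anticipate is the low-high piece $f \pl g$, where the positive regularity of $f$ must compensate for the negative regularity of $g$ at high frequencies. A direct dyadic analysis combining the Bernstein-type bound $\|S_N f\|_{L^\infty} \les N^{[1-(s-\al+\dl)]_+} \|f\|_{H^{s-\al+\dl}}$ (obtained via Cauchy--Schwarz across dyadic scales) with $\|P_N g\|_{L^\infty} \les N^{s-\al} \|g\|_{C^{-s+\al}}$ leads to the sum $\sum_N N^{2(-s+\al)} \|P_N(f\pl g)\|_{L^2}^2$, whose summability is precisely where the upper bound $\dl < 2(1-s+\al)$ --- equivalent to $s - \al + \dl/2 < 1$ --- should enter. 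If this direct estimate turns out to be insufficient on part of the admissible range of $\dl$, I would upgrade the argument by dualising, combining \eqref{duality} with the fractional Leibniz rule \eqref{Leibniz} and bootstrapping from the dual formulation of \eqref{prod1}.
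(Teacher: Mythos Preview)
Your argument for \eqref{prod1} is correct. The paper instead routes through an auxiliary space $B^{\frac12\dl}_{p,2}$ with $p<2$ via Sobolev embedding and then applies the product statement at the end of Lemma~\ref{LEM:para}; your piece-by-piece paraproduct treatment is more direct and in particular makes visible that the upper bound $\dl<2(1-s+\al)$ plays no role in \eqref{prod1}.

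For \eqref{prod2}, your bounds on $f\pe g$ and $f\pg g=g\pl f$ are fine, but the low--high piece $f\pl g$ cannot be controlled as required --- and not for lack of a trick: the inequality \eqref{prod2} is \emph{false} as stated. With $f\equiv 1$ and $g=\sum_{j\ge 0}2^{j(s-\al)}e^{i2^{j}x_{1}}$ one has $\|g\|_{C^{-s+\al}}\sim 1$ while $\|g\|_{H^{-s+\al}}=\infty$, so $fg=g$ gives an infinite left-hand side against a finite right-hand side. Your dyadic computation detects exactly this: the resulting exponent $2(-s+\al)+2[1-(s-\al+\dl)]_{+}+2(s-\al)$ is nonnegative for every admissible $\dl$, and the duality fallback hits the same wall (the Leibniz rule \eqref{Leibniz} on $fh$ requires $\|h\|_{B^{s-\al}_{2,1}}$, strictly more than the available $\|h\|_{H^{s-\al}}$). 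The paper's own chain has the same gap: the ``in particular'' of Lemma~\ref{LEM:para} actually delivers the product only in $B^{s_{1}}_{p,q}$ (with $s_{1}$ the negative index), not in $B^{s_{1}+s_{2}}_{p,q}$ as written, and with that correction the middle inequality of the paper's display for \eqref{prod2} no longer reaches $B^{\frac12\dl}_{p,2}$. The harmless repair --- sufficient at both later invocations of \eqref{prod2} --- is to relax the target to $H^{-s+\al-\eps}$ for small $\eps>0$; then \eqref{para2a} places $f\pl g$ in $B^{-s+\al}_{2,\infty}\hookrightarrow H^{-s+\al-\eps}$ and your argument closes.
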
 
\begin{proof}
First assume that $0<s<1$ and consider \eqref{prod1}.
Let $p=\frac{2}{1+s+\frac{1}{2}\dl}$ with $0<\frac{1}{2}\dl<1-s$ sufficiently small so that $p>1$.
By Sobolev embedding and Lemma~\ref{LEM:para}, we have
\begin{align}
\| fg\|_{H^{-s}} \les \|fg\|_{B^{\frac{1}{2}\dl}_{p,2}} \les \|f\|_{H^{-s+\al}}\|g\|_{B_{\infty,2}^{s-\al+\frac{1}{2}\dl}} \les  \|f\|_{H^{-s+\al}}\|g\|_{C^{s-\al+\dl}}. \label{prod3}
\end{align}
For \eqref{prod2},  with $p=\frac{2}{1+\frac{1}{2}\dl+s-\al}$ and $0<\dl<2(1-s+\al)$, when $0<s<1$:
\begin{align*}
\|fg\|_{H^{-s+\al}}\les \|fg\|_{B^{\frac 12 \dl}_{p,2}} \les \|f\|_{H^{s-\al+\dl}} 
\|g\|_{B^{-s+\al-\frac 12 \dl}_{\infty,2}} \les \|f\|_{H^{s-\al+\dl}}\|g\|_{C^{-s+\al}}.
\end{align*}
If $s\geq 1$, then we instead first use the embedding $B^{\frac{1}{2}\dl}_{1,2}\subset H^{-s}$ for both \eqref{prod1} and \eqref{prod2}.
\end{proof}

\subsection{A commutator estimate}

Our main goal in this subsection is to give a proof of the following commutator estimate. 

\begin{proposition}\label{PROP:com}
For any $s>0$ and $0<\eps<\frac{1}{4}\min(s,1)$, it holds that 
\begin{align}
\| \jb{\nb}^{s}(u^3)-3u^2 \jb{\nb}^{s} u\|_{L^{2}(\T^2)} \les \|u\|_{C^{s-\eps}(\T^2)}^{3}. \label{comL2}
\end{align}
\end{proposition}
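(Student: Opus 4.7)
My plan is to pass to Fourier variables, where the difference becomes the trilinear Fourier multiplier operator
\begin{align*}
\widehat{\jb{\nb}^{s}(u^3) - 3u^2 \jb{\nb}^{s}u}(n) = \sum_{n_1+n_2+n_3 = n} M(n_1, n_2, n_3) \widehat u(n_1) \widehat u(n_2) \widehat u(n_3),
\end{align*}
with symmetric multiplier $M := \jb{n_1+n_2+n_3}^{s} - \jb{n_1}^{s} - \jb{n_2}^{s} - \jb{n_3}^{s}$. The combinatorial factor $3$ in the statement is precisely what is needed to make the subtraction fully symmetric in $(n_1, n_2, n_3)$; without it, $M$ would retain a non-decaying main term of order $\jb{n_1}^{s}$.

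After a Littlewood-Paley decomposition $u = \sum_{N} P_{N} u$, the symmetry of $M$ lets me reduce to bounding the dyadic blocks $T_{N_1, N_2, N_3}$ with $|n_i| \sim N_i$ under the ordering $N_1 \geq N_2 \geq N_3$, then summing. In the \emph{separated regime} $N_1 \gg N_2$ one has $|n|\sim N_1$, and the mean value theorem (using $|\nabla \jb{\cdot}^{s}| \les \jb{\cdot}^{s-1}$) gives $|M| \les N_1^{s-1}(|n_2|+|n_3|) + \jb{n_2}^{s} + \jb{n_3}^{s}$. To turn this into an operator bound I would use the exact algebraic identity
\begin{align*}
T_M = T_{\jb{n}^{s}-\jb{n_1}^{s}} - T_{\jb{n_2}^{s}} - T_{\jb{n_3}^{s}},
\end{align*}
estimating the first piece as a bilinear commutator $\jb{\nb}^{s}(u_{N_1}u_{N_2}u_{N_3}) - (\jb{\nb}^{s}u_{N_1})u_{N_2}u_{N_3}$ via the multiplier bound above, and the remaining two as plain products. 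H\"older's inequality with one $L^2$ and two $L^\infty$ factors, combined with the Besov characterisation $\|P_N u\|_{L^\infty} \les N^{-(s-\eps)}\|u\|_{C^{s-\eps}}$ and Bernstein, then yields summable dyadic weights such as $N_1^{\eps-1} N_2^{1-s+\eps} N_3^{-(s-\eps)} \|u\|_{C^{s-\eps}}^3$.

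In the \emph{resonant regime} $N_1 \sim N_2$ no cancellation from $M$ is available and one falls back on the trivial bound $|M| \les N_1^{s}$. Since $P_{N_1}u \cdot P_{N_2}u \cdot P_{N_3}u$ has Fourier support in a ball of radius $\les N_1$, H\"older applied to each of the four terms of $T_{N_1, N_2, N_3}$ yields $\|T_{N_1, N_2, N_3}\|_{L^2} \les N_1^{s}\|P_{N_1}u\|_{L^\infty}\|P_{N_2}u\|_{L^\infty}\|P_{N_3}u\|_{L^\infty}$. Two comparable high frequencies now each contribute a factor $N_1^{-(s-\eps)}$, giving a net weight of $N_1^{2\eps-s}N_3^{-(s-\eps)}\|u\|_{C^{s-\eps}}^3$, which is summable in $N_1$ for $\eps < s/2$.

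Summing the dyadic estimates produces geometric series of the form $\sum_{N_1} N_1^{2\eps-s}$ and $\sum_{N_1} N_1^{\eps-1}$, both convergent under the hypothesis $\eps < \frac{1}{4}\min(s,1)$; the extra slack accommodates the various Bernstein-type losses picked up along the way. The main technical subtlety I expect is in the separated regime: a pointwise triangle inequality $|M| \leq M_1+M_2+M_3$ does not by itself transfer to an $L^2$-operator bound, so it is crucial to use the exact decomposition of $T_M$ above and to control the commutator piece via the mean value theorem rather than via a crude bound on the symbol.
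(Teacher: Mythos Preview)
Your overall strategy is sound and in fact closely mirrors the paper's: the paper's paraproduct decomposition into $u^2\pl u$, $u^2\pe u$, $u^2\pg u$ is exactly your separated/resonant dichotomy in disguise, and your resonant-regime estimate is essentially the paper's auxiliary bound $\|u^3-3(u^2\pl u)\|_{H^s}\les\|u\|_{C^{s-\eps}}^3$. The difference is organisational rather than conceptual.

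There is, however, a genuine gap in the separated regime. You correctly flag that a pointwise bound on the symbol does not transfer to an $L^2$ operator bound, but your proposed fix --- ``control the commutator piece via the mean value theorem'' together with ``H\"older with one $L^2$ and two $L^\infty$ factors'' --- does not by itself yield the weight $N_1^{\eps-1}N_2^{1-s+\eps}N_3^{-(s-\eps)}$ you claim. The mean value theorem only gives $|\jb{n}^s-\jb{n_1}^s|\les N_1^{s-1}|m|$, and turning this into $\|[\jb{\nb}^s,g]u_{N_1}\|_{L^2}\les N_1^{s-1}\|\nabla g\|_{L^\infty}\|u_{N_1}\|_{L^2}$ is a Kato--Ponce type statement that does \emph{not} follow from H\"older alone: the symbol $\jb{n_1+m}^s-\jb{n_1}^s$ does not factor. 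A crude Young/$\ell^1$ argument produces instead an extra factor of $N_2$, giving weight $N_1^{\eps-1}N_2^{2-s+\eps}N_3^{-(s-\eps)}$, which fails to sum when $s\le 1$. What is actually needed is a first-order Taylor expansion of the symbol, $\jb{n_1+m}^s-\jb{n_1}^s=s\jb{n_1}^{s-2}n_1\cdot m+R$, whose main term \emph{does} factor as $(\jb{\nb}^{s-2}\nabla u_{N_1})\cdot\nabla g$ (to which H\"older applies), together with a nontrivial remainder estimate on $R$. This is precisely the content of the paper's Lemma~2.4 (the paraproduct commutator expansion $|\nb|^s(f\pl g)-f\pl|\nb|^s g=s\sum_j(\partial_j f)\pl(|\nb|^{s-2}\partial_j g)+R(f,g)$ with $\|R\|_{L^2}\les\|f\|_{C^{s-\eps}}\|g\|_{B^{s-\eps}_{2,\infty}}$), which the paper imports from \cite{BCD,STX}. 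Once you invoke that lemma, your argument goes through; without it, the separated regime is incomplete for small $s$.
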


Whilst the estimate \eqref{comL2} is far from optimal, it will be sufficient for our purposes as we only require the regularities on the right-hand side of \eqref{comL2} to be strictly less than $s$.
We need two preliminary results before we can give a proof of \eqref{comL2}.

\begin{lemma}
For any $s>0$ and $0<\eps<\frac12 s$, it holds that 
\begin{align}
\| u^3 - 3(u^2 \pl u)\|_{H^{s}(\T^2)}\les \|u\|_{C^{s-\eps}(\T^2)}^{3}. \label{u3com}
\end{align}
\end{lemma}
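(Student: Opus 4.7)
The plan is to exploit the three-fold permutation symmetry of $u^3 = u \cdot u \cdot u$ to identify precisely the part of $u^3$ that is cancelled by $3(u^2 \pl u)$, reducing the problem to bounding an ``all three frequencies comparable'' trilinear expression by Bernstein's inequality on $\T^2$.

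On the Fourier side, $\widehat{u^3}(n) = \sum_{n_1+n_2+n_3=n} \hat u(n_1)\hat u(n_2)\hat u(n_3)$, while $\widehat{u^2 \pl u}(n)$ is the same sum restricted by a smooth dyadic cutoff $\chi_{12}$ essentially supported where $|n_1+n_2| \ll |n_3|$. Since $\hat u(n_1)\hat u(n_2)\hat u(n_3)$ is symmetric in $(n_1,n_2,n_3)$, relabeling gives
\begin{align*}
3\,\widehat{u^2 \pl u}(n) = \sum_{n_1+n_2+n_3=n} \hat u(n_1)\hat u(n_2)\hat u(n_3)\,(\chi_{12} + \chi_{13} + \chi_{23}),
\end{align*}
so that
\begin{align*}
\widehat{u^3 - 3(u^2\pl u)}(n) = \sum_{n_1+n_2+n_3=n} \hat u(n_1)\hat u(n_2)\hat u(n_3)\,\bigl(1 - \chi_{12} - \chi_{13} - \chi_{23}\bigr).
\end{align*}
The support of the last factor is contained in the region where no pair of input frequencies sums to something much smaller than the remaining frequency; a direct check shows this forces the three dyadic sizes $|n_1|,|n_2|,|n_3|$ to be comparable. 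Thus the dangerous ``one-high-two-low'' contribution to $u^3$ has been cancelled exactly, and only the ``fully resonant'' regime survives.

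On this restricted regime I would bound $R := u^3 - 3(u^2 \pl u)$ in $H^s(\T^2)$ via Bernstein on the torus and the Fourier decay implicit in $u \in C^{s-\eps}$, namely $\|P_N u\|_{L^\infty(\T^2)} \les N^{-(s-\eps)}\|u\|_{C^{s-\eps}}$. Grouping the trilinear expansion of $R$ by the common dyadic scale $N$ and using $L^\infty \hookrightarrow L^2$ on $\T^2$,
\begin{align*}
\|P_M R\|_{L^2} \les \sum_{N \gtrsim M} \|P_N u\|_{L^\infty}^3 \les M^{-3(s-\eps)}\|u\|_{C^{s-\eps}}^3,
\end{align*}
and hence
\begin{align*}
\|R\|_{H^s}^2 \les \sum_{M \in 2^{\N_0}} M^{2s - 6(s-\eps)}\|u\|_{C^{s-\eps}}^6,
\end{align*}
where the dyadic sum converges precisely when $\eps < \tfrac{2s}{3}$, a condition implied by the hypothesis $0 < \eps < \tfrac{s}{2}$.

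The main obstacle will be translating the heuristic Fourier argument above into a rigorous paraproduct computation: the operator $\pl$ is defined through smooth dyadic projections rather than sharp indicators, so the three-fold symmetry step leaves ``transition regions'' where a pair sum is only comparable to, rather than strictly smaller than, the remaining frequency. These transition regions are nonetheless contained in a ``two-comparable'' paraproduct zone, and can be controlled either by applying the resonant estimate \eqref{para3} to the comparable pair before invoking Bernstein, or directly by noting that the output frequency there is still bounded by the largest input frequency so that the Bernstein argument applies verbatim. The $\eps$ loss on the right-hand side of \eqref{u3com} is exactly what allows the above dyadic sum to converge.
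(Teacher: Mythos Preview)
Your overall strategy---use the symmetry of $u\cdot u\cdot u$ to cancel the three ``one-high-two-low'' pieces and then bound the remainder via Bernstein---is sound and matches the paper's approach in spirit. However, the ``direct check'' you invoke is false as stated, and this leads to an incorrectly optimistic decay rate.

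Concretely: from $1-\chi_{12}-\chi_{13}-\chi_{23}\neq 0$ one can only conclude that the \emph{two largest} dyadic sizes among $|n_1|,|n_2|,|n_3|$ are comparable, not all three. Indeed, take $|n_1|\sim|n_2|\sim N$, $|n_3|\sim 1$, with $|n_1+n_2|\sim 1$; then $|n_1+n_3|\sim N\sim|n_2|$ and $|n_2+n_3|\sim N\sim|n_1|$, so $\chi_{13}=\chi_{23}=0$, while $|n_1+n_2|\sim 1$ is not $\ll |n_3|\sim 1$, so $\chi_{12}=0$ as well. This configuration survives in $R$ yet has $|n_3|\ll|n_1|\sim|n_2|$, and it is not a ``transition region'' artifact of the smooth cutoffs. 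Consequently your grouping ``by the common dyadic scale $N$'' and the bound $\|P_M R\|_{L^2}\les M^{-3(s-\eps)}$ are not justified.

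The fix is minor: once you know the two largest inputs sit at a common scale $N\gtrsim M$ and the third at some $N_3\lesssim N$, the same $L^\infty\hookrightarrow L^2$ argument gives
\[
\|P_M R\|_{L^2}\ \lesssim\ \sum_{N\gtrsim M}\ \sum_{N_3\lesssim N}\|P_N u\|_{L^\infty}^2\|P_{N_3}u\|_{L^\infty}\ \lesssim\ M^{-2(s-\eps)}\|u\|_{C^{s-\eps}}^3,
\]
and then $\sum_M M^{2s-4(s-\eps)}<\infty$ iff $\eps<\tfrac{s}{2}$, which is precisely the hypothesis in the lemma (and explains why the threshold is $\tfrac{s}{2}$ rather than $\tfrac{2s}{3}$). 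The paper arrives at the same $M^{-2(s-\eps)}$ via a telescoping identity $u^3=\sum_N\big((P_{\le 2N}u)^3-(P_{\le N}u)^3\big)$, which makes the ``two large frequencies'' structure explicit without invoking the symmetrised cutoff $1-\chi_{12}-\chi_{13}-\chi_{23}$; after the correction above, your argument and theirs are essentially equivalent.
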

\begin{proof}
We have
\begin{align*}
u^3 = \sum_{N}  \big\{(P_{\leq 2N}u)^3-(P_{\leq N}u)^3  \big\}=  \sum_{N} P_{N}u \big[ (P_{\leq 2N}u)^2+(P_{\leq 2N}u)(P_{\leq N}u)+(P_{\leq N}u)^2\big],
\end{align*}
so that 
\begin{align}
u^3 - 3(u^2 \pl u) & = \sum_{N} P_{N}u \big[ (P_{\leq 2N}u)^{2}-P_{\leq \frac{1}{2}N}(u^2)\big] \label{I} \\
&\hphantom{XX}+ \sum_{N} P_{N}u \big[ P_{\leq 2N}u \cdot P_{\leq N}u -P_{\leq \frac{1}{2}N}(u^2) \big]  \label{II}\\
& \hphantom{XX} + \sum_{N} P_{N}u  \big[(P_{\leq N}u)^2 - P_{\leq \frac{1}{2}N}(u^2)\big]. \label{III}
\end{align}
We provide details only for estimating \eqref{I} as the \eqref{II} and \eqref{III} follow from similar ideas exploiting the existence of at least two similar and large frequencies.
We write 
\begin{align}
\begin{split}
(P_{\leq 2N}u)^{2}-P_{\leq \frac{1}{2}N}(u^2)&=\sum_{N_1,N_2} P_{>\frac{1}{2}N}[ P_{\leq 2N}P_{N_1}u \cdot P_{\leq 2N}P_{N_2}u] \\
& \hphantom{XXXX}  + P_{\leq \frac{1}{2}N} \big[ P_{\leq 2N}P_{N_1}u\cdot P_{\leq 2N}P_{N_2}u - P_{N_1}u \cdot P_{N_2}u\big].
\end{split} \label{comu21}
\end{align}
Let $M\in 2^{\N_0}$. For the first piece on the right-hand side of \eqref{comu21}, we have 
\begin{align}
\begin{split}
\bigg\| \sum_{N,N_1,N_2} &P_{M} \big[ P_{N}u \cdot P_{>\frac{1}{2}N}[ P_{\leq 2N}P_{N_1}u \cdot P_{\leq 2N}P_{N_2}u]\big]\bigg\|_{L^2} \\
& \les \sum_{\substack{N,N_1,N_2\\ N_1 \sim N \ges M}} \| P_{N}u \cdot P_{>\frac{1}{2}N}[ P_{\leq 2N}P_{N_1}u \cdot P_{\leq 2N}P_{N_2}u]\big]\|_{L^2} \\
& \les \sum_{\substack{N,N_1,N_2\\ N_1 \sim N \ges M}} \| P_{N}u\|_{L^{\infty}} \|P_{N_1}u\|_{L^{2}}\|P_{N_2}u\|_{L^{\infty}} \\
& \les \bigg( \sum_{\substack{N,N_1,N_2\\ N_1 \sim N \ges M}} (NN_1 N_2)^{-(s-\eps)} \bigg) \|u\|^{3}_{C^{s-\eps}} \\
& \les M^{-2(s-\eps)}\|u\|^{3}_{C^{s-\eps}}.
\end{split} \label{comu22}
\end{align} 
Using the characterisation 
\begin{align*}
\| f\|_{H^{s}}^2  \sim \sum_{M} M^{2s} \|P_{M}f\|_{L^{2}}^{2},
\end{align*}
we see that this contribution is controlled provided that $2s-4(s-\eps)<0$, enforcing $\eps<\frac{1}{2}s$. 
For the second piece on the right hand side of \eqref{comu21}, we further write it as 
\begin{align*}
 P_{\leq \frac{1}{2}N} &\big[ P_{\leq 2N}P_{N_1}u\cdot P_{\leq 2N}P_{N_2}u - P_{N_1}u \cdot P_{N_2}u\big] \\
 & =  P_{\leq \frac{1}{2}N} \big[ P_{\leq 2N}P_{N_1}u \cdot P_{>2N}P_{N_2}u - (P_{>2N}P_{N_1}u)(P_{N_2}u)\big]
\end{align*}
Again, we have at least that $\max(N_1,N_2)\ges N$ in each of these terms, and we can proceed similar to \eqref{comu22} provided that $\eps<\frac{1}{2}s$. This completes the proof.
\end{proof}

\begin{lemma}\label{LEM:com2}
Given $s>0$ and $0<\eps<\frac{1}{2}s$, there exists a multilinear operator $R$ such that 
\begin{align}
|\nb|^{s}(f \pl g) - f \pl (|\nb|^{s}g) = s \sum_{j=1}^{2} (\partial_{j}f) \pl (|\nb|^{s-2}\partial_j g) + R(f,g),
\end{align}
and $R$ satisfies 
\begin{align*}
\| R(f,g)\|_{L^{2}} \les \|f\|_{C^{s-\eps}}\|g\|_{B^{s-\eps}_{2,\infty}}.
\end{align*}
\end{lemma}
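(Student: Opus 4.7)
The plan is to Taylor-expand the Fourier multiplier $\sigma(\xi) := |\xi|^s$ on the paraproduct region $\{(n, m) : |n| \ll |m|\}$, identify the linear term with the claimed main term, and bound the remainder via a dyadic decomposition combined with a standard bilinear Fourier multiplier estimate. On the Fourier side, both $|\nb|^s(f \pl g)$ and $f \pl (|\nb|^s g)$ are bilinear multipliers on this region, with symbols $\sigma(n+m)$ and $\sigma(m)$ respectively (up to the standard $2\pi$ factors). Since $|m + \tau n| \sim |m|$ uniformly in $\tau \in [0,1]$ on the paraproduct region, Taylor's theorem yields
\begin{align*}
\sigma(n+m) - \sigma(m) = s |m|^{s-2} (m \cdot n) + r(n, m),
\end{align*}
with remainder $r(n, m) = \int_0^1 (1-\tau) \langle \nabla^2 \sigma(m+\tau n) n, n\rangle d\tau$ obeying $|r(n, m)| \les |n|^2 |m|^{s-2}$ together with the analogous derivative bounds $|\partial_n^\alpha \partial_m^\beta r(n, m)| \les |n|^{2-|\alpha|} |m|^{s-2-|\beta|}$.

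A direct Fourier computation identifies (up to the absolute constant appearing in the claimed identity) the linear term $s |m|^{s-2}(m \cdot n)$ above with the Fourier symbol of $s \sum_{j=1}^{2} (\partial_j f) \pl (|\nb|^{s-2} \partial_j g)$, producing the stated main term. I then define $R(f, g)$ as the bilinear Fourier multiplier associated with $r(n, m)$ restricted to the paraproduct region, so that the identity holds by construction, and it remains to prove the $L^2$ bound on $R$.

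To bound $\|R(f, g)\|_{L^2}$, I decompose dyadically: restricted to $|n| \sim N$ and $|m| \sim M$ with $N \ll M$, the rescaled symbol $(x, y) \mapsto N^{-2} M^{-(s-2)} r(Nx, My)$ is smooth, compactly supported, and uniformly bounded (together with all its derivatives) in $N$ and $M$. Expanding this rescaled symbol in a Fourier series on its support—whose coefficients decay rapidly by smoothness—and recombining via triangle and H\"older inequalities yields the bilinear multiplier bound
\begin{align*}
\|R(P_N f, P_M g)\|_{L^2} \les N^2 M^{s-2} \|P_N f\|_{L^\infty} \|P_M g\|_{L^2}.
\end{align*}
This bilinear Fourier multiplier estimate is the main technical step, though its proof follows classical lines; the key input is that on the paraproduct region the ratio $|n|/|m|$ is bounded away from $1$ and $|m|$ away from $0$, so the singularity of $\sigma$ at the origin plays no role.

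Finally, since $R(P_N f, P_M g)$ is Fourier-supported near $|k| \sim M$, almost-orthogonality in $M$ combined with $\|P_N f\|_{L^\infty} \les N^{-(s-\eps)} \|f\|_{C^{s-\eps}}$ and $\|P_M g\|_{L^2} \les M^{-(s-\eps)} \|g\|_{B^{s-\eps}_{2,\infty}}$ gives
\begin{align*}
\|R(f,g)\|_{L^2}^2 \les \|f\|_{C^{s-\eps}}^2 \|g\|_{B^{s-\eps}_{2,\infty}}^2 \sum_M M^{2(s-2)-2(s-\eps)} \Big(\sum_{N \ll M} N^{2-(s-\eps)}\Big)^2.
\end{align*}
The inner dyadic sum is dominated by $N \sim M$ in the relevant regime, giving $\les M^{2-(s-\eps)}$, so the entire expression simplifies to $\sum_M M^{4\eps-2s}$, which converges precisely when $\eps < s/2$. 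This is where the hypothesis $\eps < s/2$ enters, and it is the balance between the loss $N^2 M^{s-2}$ in the multiplier and the two regularity gains $N^{-(s-\eps)}$, $M^{-(s-\eps)}$ that forces it.
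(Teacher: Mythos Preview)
The paper does not give a proof of this lemma; it simply cites \cite[Theorem~2.92]{BCD} for the Euclidean case and \cite[Lemma~13]{STX} for the periodic setting. Your approach --- Taylor-expand the symbol $|\xi|^s$ on the paraproduct region, identify the first-order term, and bound the second-order remainder via a Coifman--Meyer-type bilinear multiplier estimate obtained by Fourier-series expansion of the rescaled symbol --- is precisely the standard argument behind those references, so your proposal aligns with what the paper invokes.

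One point in your final summation deserves care. Your claim that ``the inner dyadic sum is dominated by $N \sim M$'' holds only when $2-(s-\eps) > 0$, i.e.\ when $s-\eps < 2$. In the complementary case $s-\eps \ge 2$ the inner sum $\sum_{N\ll M} N^{2-(s-\eps)}$ is $O(1)$, and the remaining outer sum $\sum_M M^{2\eps-4}$ converges if and only if $\eps < 2$. Thus your computation actually proves the estimate for $0<\eps<\min(s/2,2)$ rather than the full stated range $0<\eps<s/2$. This is in fact sharp: taking $f$ supported at a single unit frequency turns $R(f,\cdot)$ into a Fourier multiplier of exact order $s-2$ on $g$, so the claimed inequality would force the embedding $B^{s-\eps}_{2,\infty}\hookrightarrow H^{s-2}$, which fails once $\eps\ge 2$. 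Hence the lemma as stated is slightly too generous when $s>4$; but since the only application in the paper (Proposition~\ref{PROP:com}) takes $\eps<\tfrac14\min(s,1)<2$, nothing downstream is affected, and your argument covers everything that is actually used.
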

 Lemma~\ref{LEM:com2} is well known in the case of $\R^2$; see the more general result in \cite[Theorem 2.92]{BCD}. For an argument in the periodic setting, we refer to \cite[Lemma 13]{STX}.

\begin{proof}[Proof of Proposition~\ref{PROP:com}]
We begin by writing 
\begin{align}
\jb{\nb}^{s}(u^3) -3u^2 \jb{\nb}^{s} u    =& |\nb|^{s}(u^3) -3u^2 |\nb|^{s} u  \label{commain} \\
& + (\jb{\nb}^{s}-|\nb|^{s})(u^3) - 3u^2 ( \jb{\nb}^{s}-|\nb|^{s})(u). \label{comdiff}
\end{align}
By the mean-value theorem,
\begin{align}
|\jb{n}^{s}-|n|^{s}| \les \jb{n}^{s-1} \quad \text{for all} \quad n\in \Z^{2}. \label{multiplierdiff}
\end{align}
If $s\leq 1$, then \eqref{multiplierdiff} implies 
\begin{align}
\|(\jb{\nb}^{s}-|\nb|^{s})(u^3) \|_{L^2} \les \|u\|_{L^6}^{3}. \label{comdiff1}
\end{align}
Otherwise, if $s>1$, \eqref{Leibniz} implies
\begin{align}
\|(\jb{\nb}^{s}-|\nb|^{s})(u^3) \|_{L^2}  \les \| u^3\|_{H^{s-1}}\les  \|u\|_{L^{\infty}}^{2} \|u\|_{H^{s-1+\eps}}\label{comdiff2}
\end{align}
for any $\eps>0$.
We also have by \eqref{multiplierdiff} that
\begin{align}
\| u^2 ( \jb{\nb}^{s}-|\nb|^{s})(u)\|_{L^2} \les \|u\|_{L^{\infty}}^{2}\|u\|_{H^{s-1}}. \label{comdiff3}
\end{align}
Now \eqref{comdiff1}, \eqref{comdiff2}, and \eqref{comdiff3} give 
\begin{align*}
\eqref{comdiff} \les \|u\|_{C^{s-\eps}}^{3}.
\end{align*}
For \eqref{commain}, we write
\begin{align}
\eqref{commain}  = & |\nb|^{s}(u^3) -3|\nb|^{s}(u^{2} \pl u)  \label{com1}\\
& + 3|\nb|^{s}(u^2 \pl u) - 3 u^2 \pl |\nb|^{s}u \label{com2} \\
& +3u^2 \pl |\nb|^{s} u - 3u^2 |\nb|^s u. \label{com3}
\end{align}
First, we have 
\begin{align*}
\eqref{com3} = -3\big( u^2 \pe |\nb|^{s} u + u^2 \pg |\nb|^s u \big).
\end{align*}
Using \eqref{para3} and \eqref{algebra}, we have 
\begin{align*}
\| u^2 \pe |\nb|^{s} u  \|_{L^2}\les \| u^2 \pe |\nb|^{s} u \|_{C^{\frac{s}{4}}} \les \| u^2\|_{C^{\frac{s}{2}}} \||\nb|^{s}u \|_{C^{-\frac{s}{4}}} \les \|u\|^{2}_{C^{\frac{s}{2}}} \| u\|_{C^{\frac{3s}{4}}} \les \|u\|_{C^{s-\eps}}^3.
\end{align*}
 We obtain the same bound for $u^2 \pg |\nb|^s u$ by using \eqref{para2}. Thus, 
 \begin{align*}
\eqref{com3} \les \|u\|_{C^{s-\eps}}^{3}.
\end{align*}
For \eqref{com1}, we use  \eqref{u3com}.
It remains to control \eqref{com2} for which we use Lemma~\ref{LEM:com2}. 
For the remainder term, we obtain the bound $\les  \|u\|_{C^{s-\eps}}^{3}$, after using \eqref{algebra}.
For the first term, we fix $j=1,2$, and use \eqref{para2a} and \eqref{algebra}, 
\begin{align*}
\| (\partial_j u^2)\pl (|\nb|^{s-2} \partial_j u)\|_{L^2}  & \leq \sum_{N_1 \ll N_2} \| (\partial_j P_{N_1}(u^2))\pl (|\nb|^{s-2} \partial_j P_{N_2}u)\|_{L^2} \\
& \les \sum_{N_1 \ll N_2} \| \partial_j P_{N_1}(u^2)\|_{L^{\infty}_{x}}\| |\nb|^{s-2} \partial_j P_{N_2}u\|_{H^{s}} \\
& \les \sum_{N_1 \ll N_2} N_1^{1-s+\eps} N_{2}^{s-1}N_{2}^{-s+\eps} \| u^2\|_{C^{s-\eps}} \|u\|_{H^{s-\eps}}.
\end{align*}
Now, if $s>1$, taking $\eps<\frac{1}{2}$ we may sum over the dyadics. If $s\leq 1$, we need $\eps<\frac{1}{2}s$. 
\end{proof}

%
%
%
%Similar to the spaces $\cj{X}^{\al}$ and $\X$, for $0<\al<1$, we define 
%\begin{align*}
%\overline{Z}^{\al}:=\{ \u \in \H^{\al} \, :\, S(t)\u \in C([0,+\infty); \CC^{\al}), \,\, \| S(t)\u\|_{\CC^{\al}}\les e^{-\frac{t}{8}}\, \},
%\end{align*}
%and define $Z^{\al}$ to be the closure of trigonometric polynomials in $\cj{Z}^{\al}$ under the norm 
%\begin{align*}
%\|\u\|_{Z^{\al}}=\sup_{t\geq 0} e^{\frac{t}{8}}\|S(t)\u\|_{\CC^{\al}}.
%\end{align*}
%Now, $Z^{\al}$ is separable and is a Banach space (see \cite[Lemma 1.2]{t18erg}) and clearly $Z^{\al}\embeds X^{\al}$.
% Notice that, by definition of the operator $S(t)$ in \eqref{linsol} and of the Sobolev spaces $\W^{\al,p}$, we have that 
%\begin{equation*}
%\Big\| \partial_t S(t) \u \Big\|_{\W^{\be,q}} = \Big\| 
%-\begin{pmatrix}
%0 & -1 \\ 1- \Delta & 1
%\end{pmatrix}
%S(t)\u
%\Big\|_{\W^{\be,q}} \les \|S(t) \u\|_{\W^{\be+1,q}}.
%\end{equation*}
%Therefore, for $0\leq t_1 \le t_2$, 
%\begin{equation*}
%\Big\| \jb{\nabla}^{-1} \frac{S(t_1) - S(t_2)}{|t_1 - t_2|} \u \Big\|_{\W^{\be,q}} \les \sup_{t_1 \le t \le t_2} \|S(t) \u\|_{\W^{\be,q}},
%\end{equation*}
%and by interpolation, for every $0 \le \ta \le 1$ and $\ep > 0$, we have that  
%\begin{gather}
%\Big\| \frac{S(t_1) - S(t_2)}{|t_1 - t_2|^\ta} \u \Big\|_{\W^{\be,q}} \les  \sup_{t_1 \le t \le t_2}  \|S(t) \u\|_{\W^{\be+\ta,q}}, \label{derivTrade}\\
%\Big\| \frac{S(t_1) - S(t_2)}{|t_1 - t_2|^\ta} \u \Big\|_{\CC^{\be}} \les \sup_{t_1 \le t \le t_2}  \|S(t) \u\|_{\CC^{\be+\ta+\ep}} \label{derivTrade1}.
%\end{gather}

\subsection{The white noise and stochastic convolution}

The space-time white noise $\xi$ is the unique (in law) random distribution such that $\{ \jb{\xi,\phi} \}_{\phi\in L^2(\R\times \T^2)}$ is a family of centred Gaussian random variables on a probability space $(\O, \F, \prob)$ with the property that 
\begin{align}
\E\big[ \jb{\xi, \phi} \jb{\xi, \psi}  \big]=\jb{\phi, \psi}_{L^2(\R\times \T^2)}  \label{whiteprop}
\end{align}
for any $\phi, \psi\in C^\infty_c(\R\times \T^2)$. 
For $t\geq 0$, we set $$\tilde{\F}_{t}=\s\big( \big\{ \jb{\xi,\phi} \, : \, \phi\vert_{(t,+\infty)\times \T^2}\equiv 0, \, \phi\in L^2(\R\times \T^2)\big\}\big)$$ and we denote by $\{\F_{t}\}_{t\geq 0}$ the usual augmentation of the filtration $\{ \tilde{\F}_{t}\}_{t\geq 0}$ (see~\cite[p. 45]{RevuzYor}).\label{filtration_def}

%It is well known that the space-time white noise can be seen as a space-time distribution belonging to the space $C^{-\frac 12 - \eps}_t H^{-1-\eps}_x$. 
%By this, we mean that there exists a stochastic process $W\in C^{\frac12-\eps}(\R; H^{-1-\eps}(\T^2))$, called the cylindrical Wiener process, such that the distributional time derivative $\partial_t W = \xi$ satisfies \eqref{whiteprop}. 
%These properties can be easily checked by defining 
%\begin{equation*}
%W(t,x) := \sum_{n \in \N} W_n(t) e_n(x) ,
%\end{equation*}
%where $\{e_n\}_{n \in \N}$ is any orthonormal basis of $L^2(\T^2)$ consisting of smooth functions, and $\{W_n\}_{n \in \N}$ are i.i.d.\ Brownian motions on $\R$.

We define the stochastic convolution $\stick_{t}(\xi)$ in a pathwise manner as the space-time distribution satisfying
\begin{align} \label{sticktested}
\jb{\stick_t(\xi), \phi}_{L^{2}_{t',x}} := {\sqrt{2}} \jb{ \xi(t'), \ind_{\{ 0\leq t'\leq t\}}\jb{\nb}^{-s}\pi_2 S(t-t')^{\ast}\phi}_{L^{2}_{t',x}},
\end{align}
for all test functions $\phi =( \phi_1 , \phi_2)^{\top}:\T^2 \to \R$.  Here, 
$S(t)$ is the linear propagator for the (damped) wave equation, and it is given by the formula
\begin{equation} \label{linsol}
e^{-\frac t2}
\begin{pmatrix}\cos\big(t\snb\big) + \frac12\frac{\sin(t\snb)}{\snb}
&\frac{\sin(t\jb{\snb})}{\snb} \\
-\big(\snb+\frac1{4\snb}\big)\sin(t\snb) & 
\cos(t\snb) -\frac12\frac{\sin(t\snb)}{\snb}
\end{pmatrix},
\end{equation}
where $\snb:= (\frac 34 -\Dl)^{\frac 12}$.
For smooth $\xi$, the definition \eqref{linsol} corresponds exactly to 
\begin{align}
\stick_t(\xi) =  \int_0^t S(t-t')\vect{0}{\sqrt{2}\jb{\nb}^{-s}\xi(t')} dt'.
\label{stick}
\end{align}
It enjoys the following regularity property. 

%Notice that this expression is well-defined for almost every realisation of the noise $\xi$, since for every $t \ge 0$, and every test function $\phi$, it is easy to check that
%$$ \ind_{\{ 0\leq t'\leq t\}}\jb{\nb}^{-s}\pi_2 S(t-t')^{\ast}\phi \in \mathrm{BV}(\R_{t'};C^2(\T^2)),$$
%while $\xi$ belongs to the space $C^{-\frac 12 - \eps}_{t'}H^{-1-\eps}_x$.
% By using standard stochastic techniques we can guess that $\stick_t(\xi) \in L^2_t\H^{s-\eps}_x$. See for instance \cite[Theorem 5.2]{DPZ1}. However, it turns out that $\stick_t(\xi) \in C(\R; \W^{s-\eps,\infty})$, and the extra integrability in space will be crucial while establishing both the local and global well-posedness results of Section 3.
%In order to contain $\stick_{t}(\xi)$ within the space $\X$, we will further restrict to $0<\al <s \wedge 1$. 

\begin{lemma} \label{PROP:stickZ}
Let $0<\al<s$ and $N\in 2^{\N_0}\cup\{\infty\}$. 
Then, $\Pi_{\leq N}\stick_{t}(\xi)\in C([0,+\infty); Z^{\al})$
 almost surely. 
 Moreover, for every $T>0$, 
\begin{align}
\E\bigg[  \sup_{N\in 2^{\N_0}\cup \{\infty\}}  \| \Pi_{\leq N}\stick_{t}(\xi)\|_{C([0,T];Z^{\al})}^{p}\bigg]
\leq C(p,T), \label{momentsstick}
\end{align}
where $1\leq p<\infty$.
\end{lemma}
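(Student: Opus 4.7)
The plan is to reduce the statement to explicit Gaussian moment bounds on the process $t' \mapsto S(t')\stick_t(\xi)$, which is precisely the object appearing in the $Z^{\al}$ norm. By the semigroup property,
\begin{align*}
S(t')\stick_t(\xi) \,=\, \int_0^t S(t+t'-t'')\vect{0}{\sqrt{2}\jb{\nb}^{-s}\xi(t'')}\, dt'',
\end{align*}
so each Fourier coefficient $\widehat{S(t')\stick_t(\xi)}(n)$ is a mean-zero Gaussian whose variance can be read off directly from the matrix entries of $S$ in~\eqref{linsol}. The damping factor $e^{-(t+t'-t'')/2}$ together with $|\sin|, |\cos| \leq 1$ yields, uniformly in $t \in [0,T]$ and $t' \geq 0$,
\begin{align*}
\E \big| \pi_1 \widehat{S(t') \stick_t(\xi)}(n)\big|^2 \les e^{-t'}\jb{n}^{-2(1+s)}, \qquad \E\big|\pi_2 \widehat{S(t')\stick_t(\xi)}(n)\big|^2 \les e^{-t'}\jb{n}^{-2s}.
\end{align*}
Crucially, applying the sharp frequency projector $\Pi_{\leq N}$ only removes Fourier modes, hence can only decrease the pointwise variance of the resulting Gaussian field; the same bounds thus hold for $\Pi_{\leq N}S(t')\stick_t(\xi)$ uniformly in $N \in 2^{\N_0}\cup \{\infty\}$.

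With the variance bounds in hand, Gaussian hypercontractivity on each dyadic block $P_M$ produces
\begin{align*}
\Big(\E\| P_M \pi_1 \Pi_{\leq N} S(t')\stick_t(\xi)\|_{L^q(\T^2)}^q\Big)^{1/q} \les_q e^{-t'/2} M^{-s},
\end{align*}
and an analogous estimate for $\pi_2$ with exponent $M^{1-s}$. Choosing $q$ large enough that $\al + \tfrac{2}{q} < s$, the Besov embedding $B^{\al + 2/q}_{q,q} \embeds C^{\al}$ together with the definition of the Besov norm yields, after summing the Littlewood--Paley series,
\begin{align*}
\Big(\E \| \Pi_{\leq N}S(t')\stick_t(\xi)\|_{\CC^{\al}}^p\Big)^{1/p} \les_{p, \al} e^{-t'/2},
\end{align*}
uniformly in $t\in [0,T]$ and $N$. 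To pass to the sup over $t' \geq 0$ defining the $Z^{\al}$ norm, I would discretise the half-line into unit intervals and combine this pointwise estimate with a Kolmogorov-type modulus of continuity on the increments $S(t')\stick_t(\xi) - S(\tilde{t}')\stick_t(\xi)$, whose Fourier variances gain a factor $|t' - \tilde{t}'|^{2\kappa}$ at the price of a slightly smaller spatial regularity, absorbed by the margin $s - \al > 0$. The resulting bound $\E\sup_{t'\in[k,k+1]} \|\Pi_{\leq N} S(t')\stick_t(\xi)\|_{\CC^{\al}}^p \les e^{-kp/2}$ is summable against the weight $e^{pt'/8}$ since $\sum_k e^{-3kp/8} < \infty$, yielding the claimed $N$-uniform moment bound on $\|\Pi_{\leq N}\stick_t(\xi)\|_{Z^{\al}}$.

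Continuity in $t$ is then obtained by a parallel argument applied to the semigroup identity
\begin{align*}
\stick_t(\xi) - \stick_{t'}(\xi) \,=\, (S(t-t') - \text{Id})\stick_{t'}(\xi) + \int_{t'}^{t} S(t-t'') \vect{0}{\sqrt{2} \jb{\nb}^{-s}\xi(t'')} \, dt'',
\end{align*}
where the second summand has Fourier-mode variance of order $|t-t'|$ and the first gains $|t-t'|^{\kappa}$ at a small regularity cost, yielding a $\CC^{\al}$-valued Hölder modulus in $t$ that upgrades to $C([0,T]; Z^{\al})$-continuity via Kolmogorov. The principal technical subtlety I anticipate is the interaction between the sup over $t' \geq 0$ in the $Z^{\al}$ norm and the sharp projector $\Pi_{\leq N}$: since $C^{\al} = B^{\al}_{\infty,\infty}$ is $L^{\infty}$-based and $\Pi_{\leq N}$ is not diagonal with respect to Littlewood--Paley blocks, it is cleanest to route the entire argument through $L^{q}$-moments at the Littlewood--Paley level and convert to $L^{\infty}$ only via Besov embedding at the final step, which also furnishes uniformity in $N$ for free via the variance comparison noted above.
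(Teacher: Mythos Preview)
Your argument establishes, for each fixed $N$, a moment bound
\[
\sup_{N\in 2^{\N_0}\cup\{\infty\}}\,\E\Big[\|\Pi_{\leq N}\stick_t(\xi)\|_{C([0,T];Z^{\al})}^{p}\Big]\leq C(p,T),
\]
but the statement asks for the supremum \emph{inside} the expectation. Your observation that $\Pi_{\leq N}$ only decreases the pointwise variance is correct and gives $N$-uniform constants at every stage of the moment computation, but it does not by itself let you interchange $\sup_N$ and $\E$. After Kolmogorov and Besov embedding you have, for each $N$ separately, an almost-sure bound with an $N$-independent constant; to control $\E[\sup_N(\cdots)]$ you still need either a maximal inequality over the countable family $\{N\in 2^{\N_0}\}$ or a summability argument.

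The paper supplies exactly this missing step via telescoping: one writes
\[
\Pi_{\leq N}\stick_t(\xi)=\Pi_{\leq 1}\stick_t(\xi)+\sum_{M\leq N/2}\big(\Pi_{\leq 2M}-\Pi_{\leq M}\big)\stick_t(\xi),
\]
so that $\sup_N\|\Pi_{\leq N}\stick_t\|$ is dominated by the full dyadic sum, and then shows
\[
\E\big[\|(\Pi_{\leq 2M}-\Pi_{\leq M})\stick_t(\xi)\|_{C([0,T];Z^{\al})}^{p}\big]\les_{p,T} M^{-\theta p}
\]
for some $\theta>0$. The decay comes from the frequency localisation $|n|_\infty\sim M$, which converts the margin $s-\al-\tfrac{2}{q}>0$ in your Besov embedding into a negative power of $M$. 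Minkowski in $L^p(\Omega)$ then sums the telescoping series and yields $\E[\sup_N(\cdots)]$. All of your ingredients (variance bounds, hypercontractivity on blocks, Besov embedding, Kolmogorov on unit intervals in both $t'$ and $t$) are the right ones and match the paper's route; you only need to reorganise them around the dyadic differences $\Pi_{\leq 2M}-\Pi_{\leq M}$ rather than around $\Pi_{\leq N}$ directly.
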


Before we give the proof, we first recall a quantitative version of the Kolmogorov continuity criterion, which is a slight modification of \cite[Lemma 2.4]{KMV}.

\begin{lemma}\label{LEM:Kolm}
Given $\eps>0$, $j\in \N_0$, $\g \in (0,1)$, $1\leq p<\infty$, a Banach space $X$, and a process $F:[j,j+1]\to X$ that is almost surely continuous,
\begin{align}
\E\big[ \|F\|_{C^{\g}([j,j+1];X)}^{p} \big] \les_{\g,\eps} \E[ \| F(j)\|_{X}^{p}] + \sup_{j\leq t'<t\leq j+1} \E \bigg[ \frac{ \|F(t)-F(t')\|_{X}^{p} }{|t-t'|^{1+\g p +\eps}} \bigg]. \label{Kolm}
\end{align}
\end{lemma}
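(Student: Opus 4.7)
The plan is a dyadic chaining argument that makes Kolmogorov's continuity theorem quantitative. By time-translation I may take $j=0$ and work on $[0,1]$. Let $D_n := \{k 2^{-n} : 0 \le k \le 2^n\}$ and $D := \bigcup_{n \ge 0} D_n$, and introduce the dyadic oscillations
$$K_n := \max_{0 \le k < 2^n} \bigl\|F((k+1) 2^{-n}) - F(k 2^{-n})\bigr\|_X.$$
Writing $M$ for the supremum on the right-hand side of \eqref{Kolm}, a union bound over the $2^n$ consecutive dyadic pairs at spacing $2^{-n}$ gives
$$\E[K_n^p] \le 2^n \cdot M \cdot (2^{-n})^{1 + \g p + \eps} = M \cdot 2^{-n(\g p + \eps)}.$$

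Next I establish a pointwise chaining bound on $D$. For $t \in D$, let $q_n(t)$ denote the largest element of $D_n$ with $q_n(t) \le t$; then $q_n(t) \to t$ and $q_{n+1}(t) - q_n(t) \in \{0, 2^{-(n+1)}\}$, so telescoping yields
$$F(t) - F(q_{n_0}(t)) = \sum_{n \ge n_0} \bigl[F(q_{n+1}(t)) - F(q_n(t))\bigr],$$
with each summand either zero or an increment across a length-$2^{-(n+1)}$ dyadic subinterval, hence of $X$-norm at most $K_{n+1}$. For $s, t \in D$ with $2^{-(n_0+1)} < |t-s| \le 2^{-n_0}$, the ancestors $q_{n_0}(t), q_{n_0}(s) \in D_{n_0}$ differ by at most one dyadic unit (since $\lfloor t 2^{n_0}\rfloor - \lfloor s 2^{n_0}\rfloor \in \{0,1\}$), so the triangle inequality gives
$$\|F(t) - F(s)\|_X \le K_{n_0} + 2 \sum_{n > n_0} K_n \le 2 \sum_{n \ge n_0} K_n.$$
Dividing by $|t-s|^\g > 2^{-(n_0+1)\g}$ and taking the supremum over $s \ne t$ in $D$ yields
$$[F]_{C^\g(D;X)} \le 2^{1+\g} \sum_{n \ge 0} 2^{n \g} K_n.$$

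Taking $L^p$-norms via Minkowski's inequality and inserting the moment bound on $K_n$,
$$\bigl\| [F]_{C^\g(D;X)} \bigr\|_{L^p} \le 2^{1+\g} \sum_{n \ge 0} 2^{n \g} \,\E[K_n^p]^{1/p} \le 2^{1+\g} M^{1/p} \sum_{n \ge 0} 2^{-n \eps/p},$$
and the last series is summable since $\eps > 0$. By almost sure continuity of $F$, the H\"older seminorm on $D$ equals that on $[0,1]$, and the elementary estimate $\sup_t \|F(t)\|_X \le \|F(0)\|_X + [F]_{C^\g}$ upgrades the seminorm bound to one on the full $C^\g$ norm. Raising to the $p$th power produces \eqref{Kolm} with a constant $C(\g, \eps)$ depending only on $\g, \eps, p$. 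The only subtle point is the bookkeeping in the dyadic chaining, in particular the choice $n_0 \sim \log_2(1/|t-s|)$ that relates the telescoping series $\sum_n 2^{n\g} K_n$ to the pointwise H\"older quotient; everything else reduces to routine Chebyshev and Minkowski manipulations.
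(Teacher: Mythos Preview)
Your argument is correct: this is the standard dyadic chaining proof of the quantitative Kolmogorov criterion. The paper does not give its own proof of this lemma but simply cites \cite[Lemma~2.4]{KMV}, and your argument is precisely the classical one underlying that reference, so there is nothing to compare. One cosmetic point: the implicit constant you obtain, namely $2^{(1+\g)p}\bigl(1-2^{-\eps/p}\bigr)^{-p}$ together with the factor $2^{p-1}$ from $(a+b)^p\le 2^{p-1}(a^p+b^p)$, does depend on $p$ as well as on $\g,\eps$; this is harmless for the applications in the paper (where $p$ is fixed), but you may want to write $\les_{\g,\eps,p}$ for consistency.
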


\begin{proof}[Proof of Lemma~\ref{PROP:stickZ}]
The almost sure properties and the moment bound \eqref{momentsstick} when $N=+\infty$ were proved in \cite[Proposition 2.7 and 2.8]{FT1}. 
Note that this does not automatically imply \eqref{momentsstick}, since the operator $\Pi_{\leq N}$ is not bounded on $\CC^{\al}$.
However, the estimate immediately follows form the arguments in \cite[Proposition 2.8]{FT1}, so here we highlight the main modifications to the argument.
For \eqref{momentsstick}, a telescoping series argument and Minkowski's inequality shows that it suffices to prove that
\begin{align}
\E\big[ \| \Pi_{\leq 1}\stick_{t}(\xi)\|_{C([0,T];Z^{\al})}^{p}\big] & \les_{p,T} 1,\label{diffstick2}\\
\E \big[  \| \Pi_{\leq 2M}\stick_{t}(\xi)-\Pi_{\leq M}\stick_{t}(\xi)\|_{C([0,T];Z^{\al})}^{p}] &\les_{p,T} M^{-\ta} \label{diffstick1}
\end{align}
for some $\ta>0$, and every $M\in 2^{\N_0}$. 

 For shorthand, we define $\stick_{t}(\xi_M):= \Pi_{\leq 2M}\stick_{t}(\xi)-\Pi_{\leq M}\stick_{t}(\xi)$.  To control the supremum over $t\in [0,T]$ in \eqref{diffstick1}, we decompose $[0,T]$ into unit intervals and then apply Lemma~\ref{LEM:Kolm} on each interval. We only discuss the first term on the right-hand side of \eqref{Kolm}. The estimates required for the time-differences term in \eqref{Kolm} follow similarly using the strategy below and the estimates in the proof of \cite[Proposition 2.7]{FT1}, up to the modification we give below. Fix $t\in (0,T]$. 
Recalling the definition of the $Z^{\al}$-norm in \eqref{Znorm}, for finite $p>\frac{2}{s-\al}$, Minkowski's inequality implies
\begin{align*}
\E[ \| \stick_{t}(\xi_M)\|_{Z^{\al}}^{p}]^{\frac{1}{p}} & \les \sum_{T'\in \N} e^{\frac{T'}{8}} 
\E \bigg[  \sup_{\tau \in [T'-1,T']} \| S(\tau)\stick_{t}(\xi_M)\|_{\CC^{\al}}^{p} \bigg]^{\frac 1p}.
\end{align*}
To control the supremum over $\tau$, we again need to use Lemma~\ref{LEM:Kolm}. We again only discuss the contribution from the first term from \eqref{Kolm}. By Sobolev embedding and the Gaussianity of $\xi$, we have
\begin{align*}
\E\big[ \| \jb{\nb}^{\al+\frac{2}{p}} S(T'-1)\stick_{t}(\xi_M)\|_{\W^{0,p}}^{p} \big]^{\frac{1}{p}}
& \les \bigg(  \sum_{n} \jb{n}^{2\al+\frac{4}{p}} \, \E[ |\jb{ S(T'-1)\stick_{t}(\xi_M), \phi_{n}}|^{2}] \bigg)^{\frac 12},
\end{align*}
where $\phi_{n} = ( e_n, \jb{n}^{-1}e_n)^{\top}$. Using \eqref{sticktested} and the frequency localisations from the difference in $\stick_{t}(\xi_M)$, which forces $|n|_{\infty}\sim M$, we bound this by 
\begin{align*}
e^{-\frac{T'}{2}}\bigg( \sum_{|n|_{\infty}\sim M} \jb{n}^{2\al+\frac{4}{p}-2s-2} \bigg)^{\frac 12} \les e^{-\frac{T'}{2}} M^{-(s-\al-\frac{2}{p})},
\end{align*}
which gives \eqref{diffstick1}, as long as $p>\frac{2}{s-\al}$. Finally, \eqref{diffstick2} follows similarly. 
\end{proof}

\section{The flow for \eqref{SDNLW}}

\subsection{Review of well-posedness}

In this section, we recall elements from \cite{FT1} of the notion of solution and associated well-posedness for the equation \eqref{SDNLW}.
We view \eqref{SDNLW} in the following integral formulation: 
\begin{align}
\u= S(t)\u_0 +\int_{0}^{t}S(t-t') \begin{pmatrix} 0 \\ {\text{\small{$\sqrt{2}$}}} \jb{\nb}^{-s}\xi(t') \end{pmatrix} dt' -\int_{0}^{t}S(t-t')\begin{pmatrix} 0 \\ (\pi_{1}\u(t'))^{3} \end{pmatrix}dt',
\label{duhamel}
\end{align}
where $\pi_1$ denotes the projection to the first coordinate.
Motivated by \eqref{duhamel} and these considerations about the stochastic convolution~\eqref{stick}, we define a solution of $\eqref{SDNLW}$ in terms of the first-order expansion~\cite{mckean, Bo96, dpd}:
\begin{align} \label{soldef}
\u(t) = \Phi_t(\u_0,\xi) =S(t)\u_0 +\stick_{t}(\xi)+\w(t),
\end{align}
where $\w=(w, \dt w)^{\top}$ solves the equation
\begin{align}
\w(t) = - \int_0^t S(t-t') \vect{0}{\pi_1( S(t')\u_0+\stick_{t'}(\xi)+\w(t'))^{3}  } dt',
\label{veqn}
\end{align}
We note that the equation \eqref{veqn} is chosen to be the mild formulation of the differential equation 
$$ \dt^{2}w + \dt w+ (1-\Delta) w + \pi_1(S(t)\u_0 +\stick_{t}(\xi)+\w(t))^3 = 0, $$
which we obtain by inserting the ansatz \eqref{soldef} into the equation \eqref{SDNLW}. We recall the well-posedness statement for \eqref{veqn} below in Proposition~\ref{PROP:LWP}.

In order to make the arguments in this paper rigorous, we require approximating finite dimensional versions of \eqref{SDNLW}, which we describe now.
Given $N\in 2^{\N_0}$, we define the truncated system of \eqref{NLWvec} by 
\begin{equation}
\begin{cases}
\partial_t \vect{u^N}{v^N}  &= -\begin{pmatrix} 0 & -1 \\  1-\Delta & 1 \end{pmatrix} \begin{pmatrix} u^N \\ v^N \end{pmatrix}  - \Pi_{\leq N} \vect{0}{(\Pi_{\leq N}u^N)^3} + \vect{0}{\sqrt{2} \jb{\nb}^{-s}\xi} \\
 \begin{pmatrix} u^N \\ v^N \end{pmatrix}(0) & = \begin{pmatrix} u_0 \\ u_1 \end{pmatrix}.
\end{cases} \label{NLWvectrunc}
\end{equation}
Solutions to \eqref{NLWvectrunc} naturally decompose as:
\begin{align*}
\u^{N}(t) = \Pi_{\leq N} \u^N (t)  + \Pi_{>N}\u^N(t)= \u_{N} (t)+\u^{\perp}_{N}(t),
\end{align*}
where the low-frequency part $\u_N (t) =: (u_N, v_N)$ solves 
\begin{equation}
\begin{cases}
\partial_t \vect{u_N}{v_N}  &= -\begin{pmatrix} 0 & -1 \\  1-\Delta & 1 \end{pmatrix} \begin{pmatrix} u_N \\ v_N \end{pmatrix}  - \Pi_{\leq N} \vect{0}{u_N^3} + \vect{0}{\sqrt{2} \jb{\nb}^{-s} \Pi_{\leq N}\xi} \\
 \begin{pmatrix} u_N \\ v_N \end{pmatrix}(0) & = \begin{pmatrix}  \Pi_{\leq N} u_0 \\ \Pi_{\leq N}u_1 \end{pmatrix}.
\end{cases} \label{NLWuN}
\end{equation}
and the high-frequency part $\u^{\perp}_{N} = :(u_{N}^{\perp}, v_{N}^{\perp})$ solves the linear damped stochastic wave equation:
\begin{equation}
\begin{cases}
\partial_t \vect{u^{\perp}_N}{v^{\perp}_N}  &= -\begin{pmatrix} 0 & -1 \\  1-\Delta & 1 \end{pmatrix} \begin{pmatrix} u^{\perp}_N \\ v^{\perp}_N \end{pmatrix} + \vect{0}{\sqrt{2} \jb{\nb}^{-s} \Pi_{>N}\xi} \\
 \begin{pmatrix} u^{\perp}_N \\ v^{\perp}_N \end{pmatrix}(0) & = \begin{pmatrix}  \Pi_{> N} u_0 \\ \Pi_{>N}u_1 \end{pmatrix}.
\end{cases} \label{NLWuNperp}
\end{equation}

%The Gaussian measure 
%\begin{align*}
%d\nu_s = Z_{s}^{-1} e^{-\frac 12 \|\u\|^{2}_{\H^{s+1}}} d\u 
%\end{align*}
%decomposes according to the projections $\Pi_{\leq N}$ as
%\begin{align*}
%d\nu_{s} = d\nu_{s,N}\otimes d\nu_{s,N}^{\perp}.
%\end{align*}

We write solutions $\u_{N}$ to \eqref{NLWuN} as 
\begin{align}
\begin{split}
\u_N (t) &=S(t) \Pi_{\leq N}\u_0 +\stick_{t}(\Pi_{\leq N}\xi)+\w_N (t) \\
&  = : \Phi_{t}^{\text{lin}}(\Pi_{\leq N}\u_0, \Pi_{\leq N}\xi)+ \w_{N}(t).
\end{split}\label{uN}
\end{align}
where 
\begin{align}
\Phi_{t}^{\text{lin}}(\u_0,\xi) : = S(t) \u_0 + \stick_{t}(\xi) \label{Philin}
\end{align}
 is the solution map for the linear stochastic damped NLW,
 and $\w_N$ solves 
\begin{align}
\w_{N}(t) = - \int_0^t S(t-t')\Pi_{\leq N} \vect{0}{(\pi_1 \Pi_{\leq N}(S(t')\u_0+\stick_{t'}(\xi)+\w_{N}(t')))^3  } dt'. 
\label{veqnN}
\end{align}
We will occasionally use the fact that 
$\Pi_{\leq N}\Phi^{\text{lin}}_{t}(\u_0,\xi) = \Phi^{\text{lin}}_{t}(\Pi_{\leq N}\u_0,\Pi_{\leq N}\xi)$ without explicit reference. 
We have the following global well-posedness results for~\eqref{SDNLW} and \eqref{veqnN}.

\begin{proposition}\label{PROP:LWP}\cite[Section 3]{FT1}
Let $N\in 2^{\N_0}$, $s>0$, $0<\al<s$, $T>0$, $\u_0 \in \X$, and $\w_0\in \H^1$. Then, the equations 
\begin{align}
\w(t) &=S(t)\w_0 - \int_0^t S(t-t')\vect{0}{(\pi_1 (S(t')\u_0+\stick_{t'}(\xi)+\v(t')))^3  } dt', \label{veqnd}\\
\w_{N}(t) &=S(t)\w_0 - \int_0^t S(t-t') \Pi_{\leq N} \vect{0}{(\pi_1 \Pi_{\leq N}(S(t')\u_0+\stick_{t'}(\xi)+\v_{N}(t')))^3 } dt',
\label{veqnNd}
\end{align}
are almost surely globally-well posed in $\H^{1}(\T^2)$. Moreover, the following hold true:

\smallskip
\noi
\textup{(i)} With $\w_N$ denoting the solution to \eqref{veqnNd} with $\w_0\equiv 0$,
 for every $N\in 2^{\N_0}\cup\{\infty\}$ and for $0<\al<s\wedge 1$, 
it holds that
\begin{align}
\begin{split}
\|\w_N(t)\|_{C([0,T];\H^1)}^{2} &\leq C( \|\Pi_{\leq N}\u_0\|_{X^{\al}}, \|\Pi_{\leq N}\stick_{t}(\xi)\|_{C([0,T];X^{\al})}, T).
\end{split} \label{wNbd}
\end{align}

%\begin{align*}
%\|\w_N(t)\|_{\H^1}^{2} &\les \eta(t; \|\u_0\|_{\X}) \bigg(1+\|\Pi_{\leq N}\u_0\|_{\X}^{13}+\|\pi_1 \Pi_{\leq N}\stick_{t}\|_{L^6}^{6} \\
%& \hphantom{XX}+ \int_0^{t} e^{-c(t-t')} \big(\| \Pi_{\leq N}\stick_{t'}\|_{\W^{\al,\frac{4}{\al}}}^{\frac{8}{\al}}+\| \wick{(\Pi_{\leq N}\stick_{t'})^{2}}_{\g}\|_{L^4}^{7}+\| \wick{(\Pi_{\leq N}\stick_{t'})^{3}}_{\g}\|_{L^2}^{2}\big) dt' \bigg),
%\end{align*}

%
%\smallskip 
%\noi
%\textup{(ii)}
%For every $p < \infty$, there exists $C_p > 0$ such that 
%\begin{equation}\label{vmoments}
%\sup_{N\in 2^{\N_0}\cup \{\infty\}}\big(\E \big[\|\w_N(t)\|_{\H^{1}}^p\big]\big)^\frac 1 p \le \eta(t; \|\u_0\|_{\X})^\frac 12(C_p + \| \u_0\|_{\X}^{7}).
%\end{equation}

\smallskip 
\noi
\textup{(ii)}
For every $T<\infty$, we have 
\begin{align*}
\lim_{N\to \infty} \|\w-\w_N\|_{C([0,T];\H^1)}=0 \quad\textup{a.s.}
\end{align*}
%and for every $R>0$, the convergence is uniform on the ball in 
%\begin{align*}
%\{ (\u_0,\xi) \in X^{\al}\times Z^{\al} \,:\, \| \u_0\|_{X^{\al}} + \| \stick_{t}(\xi)\|_{Z^{\al}} \leq R\}.
%\end{align*}
\end{proposition}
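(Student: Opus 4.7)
The plan is to prove Proposition~\ref{PROP:LWP} by combining a local well-posedness argument via a contraction in $C([0,T]; \H^1)$, a global energy estimate that is uniform in the truncation parameter $N$, and a stability argument for the convergence $\w_N \to \w$ as $N\to\infty$, following the approach of \cite{FT1}.

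For local well-posedness, I would view \eqref{veqnd} (and the truncated variant \eqref{veqnNd}) as a fixed point for the Duhamel map
$\Gamma[\v](t) := S(t)\w_0 - \int_0^t S(t-t')\vect{0}{(\pi_1 (S(t')\u_0 + \stick_{t'}(\xi) + \v(t')))^3}\, dt'$
on the space $C([0,T]; \H^1)$. The two-dimensional Sobolev embedding $H^1 \embeds L^p$ for any $p < \infty$, combined with $X^\al \embeds \W^{\al, 2/\al} \embeds \W^{0,6}$, places the cube $(\pi_1(\cdots))^3$ in $L^2$ with norm controlled by a cube of sums of the relevant norms. Since the wave Duhamel integral maps $L^1_t L^2_x$ into $C_t \H^1_x$, a standard contraction argument on a suitable ball gives local existence on an interval whose length depends only on the $X^\al$ norms of $\u_0$ and $\stick_\cdot(\xi)$; in the truncated case the same estimates hold uniformly in $N$ since $\Pi_{\le N}$ is bounded on all the function spaces involved.

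For the global bound in \textup{(i)}, I would employ the modified energy
\[
\tilde E_w(t) := \tfrac{1}{2}\|\dt w\|_{L^2}^2 + \tfrac{1}{2}\|w\|_{H^1}^2 + \tfrac{1}{4}\|w\|_{L^4}^4 + \int_{\T^2} w^3 \Psi \,dx,
\]
where $w := \pi_1 \w$ and $\Psi := \pi_1(S(t)\u_0 + \stick_t(\xi))$. Writing out the PDE $\dt^2 w + \dt w + (1-\Dl)w + (w+\Psi)^3 = 0$ satisfied by $w$ and using that $\dt \Psi = \pi_2(S(t)\u_0 + \stick_t(\xi)) =: \widetilde \Psi$, a direct computation will yield
\[
\frac{d}{dt} \tilde E_w = -\|\dt w\|_{L^2}^2 - 3\int \dt w \cdot w \Psi^2\,dx - \int \dt w \cdot \Psi^3\,dx + \int w^3 \widetilde\Psi\,dx,
\]
in which the worst interaction $-3\int \dt w \cdot w^2 \Psi$ from the nonlinearity cancels exactly against the term $3\int w^2 \dt w \cdot \Psi$ generated by differentiating the corrector. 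Of the three remaining forcings, the first two are absorbed by Cauchy--Schwarz/Young using $\Psi \in L^p$ for some $p$ large (from $X^\al \embeds \W^{0, 2/\al}$), while the last is bounded by duality together with a fractional Leibniz estimate, producing a bound of the form $|\int w^3 \widetilde\Psi|\les \tilde E_w\cdot(1+\|\widetilde\Psi\|_{W^{\al-1,2/\al}})$. Gr\"onwall's inequality then delivers a global bound on $\tilde E_w$, and hence on $\|\w\|_{\H^1}^2$, after removing the corrector $\int w^3 \Psi$ by Young's inequality. For the truncated equation, the frequency localisation $\Pi_{\le N} w_N = w_N$ combined with the $L^2$-self-adjointness of $\Pi_{\le N}$ ensures the same computation applies with $\Psi$ replaced by $\Pi_{\le N}\Psi$, producing uniformity in $N \in 2^{\N_0} \cup \{\infty\}$.

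For part \textup{(ii)}, I would study the difference $\mathfrak d_N := \w - \w_N$, which solves a linear damped wave equation whose source decomposes as a linear expression in $\mathfrak d_N$ with coefficients controlled by the uniform bounds from part \textup{(i)}, plus remainders that vanish as $N\to\infty$, thanks to the convergences $\Pi_{\le N}\u_0 \to \u_0$ in $X^\al$ and $\Pi_{\le N}\stick_t(\xi)\to\stick_t(\xi)$ in $C([0,T];X^\al)$ provided by Lemma~\ref{PROP:stickZ}. A further Gr\"onwall argument then yields $\mathfrak d_N \to 0$ in $C([0,T]; \H^1)$ almost surely. I expect the hard part of the whole proof to be the energy estimate: identifying the correct modified energy to absorb the worst trilinear interaction, and then bounding the residual $\int w^3 \widetilde\Psi$ in which $\widetilde\Psi$ has negative regularity, which will require carefully combining fractional Leibniz with the two-dimensional Sobolev embeddings of $H^1$ to place $w^3$ into a positive-regularity Besov space dual to that of $\widetilde\Psi$.
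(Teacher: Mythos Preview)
Your proposal is correct and follows essentially the same route as the paper (which in turn defers to \cite[Section 3]{FT1}): local theory by a Banach fixed point using the embeddings $H^1(\T^2)\hookrightarrow L^p$ and $X^\al\hookrightarrow \W^{0,6}$, globalisation via a Gr\"onwall argument on a modified NLW energy with the corrector $\int w^3\Psi$ that cancels the worst cross term $-3\int \dt w\,w^2\Psi$, and convergence $\w_N\to\w$ by a difference estimate plus Gr\"onwall. Your identification of the residual term $\int w^3\widetilde\Psi$ as the delicate one, handled by duality and fractional Leibniz against the negative-regularity $\widetilde\Psi\in W^{\al-1,2/\al}$, matches the mechanism behind \cite[(3.12)]{FT1}.
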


The existence and uniqueness of $\w$ and $\w_N$ is guaranteed by an application of Banach fixed point theorem, which uses only the fractional Leibniz rule and Sobolev embedding. The global bound \eqref{wNbd} follows from a Gronwall argument on an energy quantity resembling that of the Hamiltonian for the undamped, non-stochastic NLW; see \cite[(3.12)]{FT1}. The a priori bound \eqref{wNbd} is only of interest when $s<1$. When $s\geq 1$, we still have global well-posedness for $\w$ by a Gronwall argument with the energy 
\begin{align}
\begin{split}
H_{N}(u,v)&:= H(\Pi_{\leq N} u, \Pi_{\leq N}v),\\
H(u,v) &:= \frac{1}{2}\int_{\T^{2}}u^2+| \nb u|^{2} +v^2 dx+ \frac{1}{4}\int_{\T^2} u^4 dx.
\end{split} \label{HN}
\end{align}
We omit the details. See for example \cite{BuTz2}.

%The additional term in this energy allowed the authors in \cite{FT1} to leverage the time-decay from the damping term to obtain the good-long time bound \eqref{vmoments} which is controlled uniformly in $t\geq 0$. The convergence property (iii) follows from standard arguments. 

We remark that by regularity counting arguments, we expect $\w$ to be more regular than $\H^{1}(\T^2)$ and to belong to $\H^{1+\al}(\T^2)$. This additional regularity can be justified a posteriori from Proposition~\ref{PROP:LWP}, and we will make use of it later; see Lemma~\ref{LEM:Qscontrol}.

Recall that  for$\u_0\in \X$, and $N\in 2^{\N_0}$, we have
\begin{align}
\Phi_{t}^{N}(\u_0; \xi):=\u^{N}(t) = \u_{N}(t) + \u^{\perp}_{N}(t), \label{PhiN}
\end{align}
where $\u_N$ has the decomposition \eqref{uN} and $\u^{\perp}_{N}(t)$ solves the linear equation \eqref{NLWuNperp}.
When $N=\infty$, we write 
\begin{align}
\Phi^{\infty}_{t}(\u_0;\xi):=\Phi_{t}(\u_0;\xi):=S(t)\u_0  +\stick_{t}(\xi) +\w(t), 
\label{PhiN2}
\end{align}
where $\w$ solves \eqref{veqnd}. We have the following convergence statement for the nonlinear flows. 

\begin{lemma}\label{LEM:ASconvflow}
Let $s>0$, $0<\al<s$, $\u_0\in X^{\al}$, and $T>0$. Then, $\Phi_{t}^{N}(\u_0,\xi)$ converges to $\Phi_{t}(\u_0,\xi)$ almost surely in $C([0,T];\H^{\al}(\T^2))$ as $N\to \infty$. 
\end{lemma}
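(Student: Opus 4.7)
The main idea is to identify a cancellation in the linear part of the flow that reduces the difference $\Phi_t^N(\u_0;\xi) - \Phi_t(\u_0;\xi)$ to $\w_N(t) - \w(t)$, and then invoke Proposition~\ref{PROP:LWP}(ii). By \eqref{PhiN}, $\Phi_t^N(\u_0;\xi) = \u_N(t) + \u_N^\perp(t)$. From \eqref{uN}, we have $\u_N(t) = \Phi_t^{\text{lin}}(\Pi_{\leq N}\u_0, \Pi_{\leq N}\xi) + \w_N(t)$, while the unique solution to the linear equation \eqref{NLWuNperp} is $\u_N^\perp(t) = \Phi_t^{\text{lin}}(\Pi_{>N}\u_0, \Pi_{>N}\xi)$. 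Since $\xi \mapsto \stick_t(\xi)$ is linear by \eqref{sticktested} and $S(t)$ commutes with $\Pi_{\leq N}$, the map $\Phi_t^{\text{lin}}$ is linear in $(\u_0, \xi)$, so the two pieces sum to $\Phi_t^{\text{lin}}(\u_0, \xi) + \w_N(t)$. Subtracting \eqref{PhiN2} yields the key identity
\begin{align*}
\Phi_t^N(\u_0;\xi) - \Phi_t(\u_0;\xi) = \w_N(t) - \w(t).
\end{align*}

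\smallskip

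Proposition~\ref{PROP:LWP}(ii) then gives $\w_N \to \w$ in $C([0,T]; \H^1)$ almost surely. In the range $0 < \al \leq 1$, the embedding $\H^1 = H^1 \times L^2 \embeds H^\al \times H^{\al-1} = \H^\al$ is immediate and completes the proof.

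\smallskip

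For the case $1 < \al < s$ (only possible when $s > 1$), I would upgrade the $\H^1$-convergence by a bootstrap-and-interpolate argument. Starting from the mild formulation \eqref{veqnNd} with $\w_0 \equiv 0$, using the pathwise bound on $\Pi_{\leq N}\stick_t(\xi)$ in $C([0,T]; Z^{\al'})$ for $\al' < s$ from Lemma~\ref{PROP:stickZ}, the a priori bound \eqref{wNbd}, the fractional Leibniz rule \eqref{Leibniz}, the algebra property \eqref{algebra}, and the commutator estimate of Proposition~\ref{PROP:com}, one can close a uniform-in-$N$ bound $\|\w_N\|_{C([0,T]; \H^{1+\gamma})} \leq C(\omega, T)$ for some $\gamma > \al - 1$. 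Combined with the same bound for $\w$, the Sobolev interpolation
\begin{align*}
\|\w_N - \w\|_{\H^\al} \les \|\w_N - \w\|_{\H^1}^{1-\theta}\, \|\w_N - \w\|_{\H^{1+\gamma}}^\theta, \qquad \theta = (\al - 1)/\gamma,
\end{align*}
together with the $\H^1$-convergence, delivers the desired convergence in $\H^\al$. \textbf{Main obstacle:} the uniform $\H^{1+\gamma}$ bound on $\w_N$ when $\al > 1$; the product estimates of Section~2 must close without losing regularity uniformly in $N$, and the flexibility afforded by the non-singular regime $s > 0$ together with the commutator estimate should be sufficient to accomplish this.
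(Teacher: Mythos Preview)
Your reduction $\Phi_t^N - \Phi_t = \w_N - \w$ and the case $\al \le 1$ match the paper exactly. For $\al > 1$ the paper takes a slightly more direct route: rather than proving uniform $\H^{1+\gamma}$ bounds and interpolating, it estimates the difference $\w_N - \w$ in $\H^{1+\al}$ directly from the Duhamel formula, using that $X^\al = \H^\al$ is an algebra when $\al \ge 1$ and that $\Pi_{\le N}S(t)\u_0$ and $\Pi_{\le N}\stick_t(\xi)$ converge in $\H^\al$. Your interpolation argument is correct and arguably more robust (you only need uniform bounds, not difference estimates, at the top regularity), while the paper's approach avoids the interpolation step entirely. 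One small remark: the commutator estimate of Proposition~\ref{PROP:com} plays no role here---for the uniform $\H^{1+\gamma}$ bound on $\w_N$ you only need the fractional Leibniz rule \eqref{Leibniz} (or the algebra property of $\H^\al$ when $\al>1$), exactly as in \eqref{tightv}.
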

\begin{proof}
When $\al<1$, this is immediate from \eqref{PhiN}, \eqref{PhiN2}, and Proposition~\ref{PROP:LWP} (iii). When $\al \ge 1$, we can estimate the difference $\w_N -\w$ in $\H^{1+\al}$ directly using the Duhamel formulation and exploiting the convergence in Proposition~\ref{PROP:LWP} and the convergence of $\Pi_{\leq N}S(t)\u_0$ and $\Pi_{\leq N}\stick_{t}(\xi)$ in $\H^{\al}$.
\end{proof}

\subsection{Markov semigroups}

In this section, we recall how the flows \eqref{PhiN} and \eqref{PhiN2} generate Markov semigroups on $X^{\al}$.
We denote by $\B_{b}(\X)$ the space of measurable, bounded functions from $\X$ to $\R$, and by $\mathcal{C}_{b}(\X)$ the space of bounded, continuous functions from $\X$ to $\R$, both endowed with the norm 
\begin{align*}
\|F\|_{L^{\infty}}:=\sup_{\u_0 \in \X}|F(\u_0)|.
\end{align*}
Given $N\in 2^{\N_0}\cup\{\infty\}$ and $F\in \mathcal{B}_{b}(\X)$, we define the family of bounded linear operators $\Pt{t}^{N}:F\mapsto \Pt{t}^NF$, by 
\begin{align}
\Pt{t}^NF(\u_0):=\E[ F(\Phi_{t}^N(\u_0;\xi))], \quad t\geq 0, \label{Ptdefn}
\end{align}
for every $\u_0\in \X$, and $\Pt{t}:=\Pt{t}^{\infty}$.
We recall from \cite[Section 4]{FT1}, that $\Pt{t}$ defines a Markov semigroup. The same is true for the operators $\Pt{t}^{N}$, $N\in 2^{\N_0}$, defined using the truncated flow. 
We state the main properties of the semigroups $\{\Pt{t}^{N}\}_{N}$. For the proof, see \cite[Proposition 4.1, Lemma 4.2, and Proposition 4.3]{FT1}. 

\begin{lemma} \label{LEM:Markov}
Let $N\in 2^{\N_0}\cup \{\infty\}$. Then, the following hold true: 

\smallskip
\noi
\textup{(i)} Let $\u_0\in \X$. Then the map $[0,\infty)\to \X$ defined by $t\to \Phi_{t}^{N}(\u_0;\xi)$ is adapted with respect to the filtration $\{\mathcal{F}_{t}\}_{t\geq 0}$. In particular, for every $t\geq 0$,
\begin{align}
\Phi^{N}_{t}(\u_0; \xi) = \Phi^{N}_{t}(\u_0 , \ind_{[0,t]}\xi) \label{adapt}
\end{align}

\smallskip
\noi
\textup{(ii)}For a.e.\ realisation of $\xi$, and for every $t,h\geq 0$, we have that 
\begin{equation}\label{semigroup}
\Phi^{N}_{t+h}(\u_0, \xi) = \Phi^{N}_{h}(\Phi^{N}_{t}(\u_0, \ind_{[0,t]}\xi), R_{{t}}(\ind_{[t,\infty)}\xi)),
\end{equation}
where $R_{t}$ is the (right) time translation by $t_0$, which is defined on space-time distributions as an extension of the map $R_{t}f(s,x) = f(s+t,x),$ for $f\in L^1_{\mathrm{loc}}(\R \times \T^2)$.

\smallskip
\noi
\textup{(iii)} $\{\Phi_{t}^{N}\}_{t\geq 0}$ is a Markov process and $\{\Pt{t}^{N}\}_{t\geq 0}$ is a Markov semigroup.

\end{lemma}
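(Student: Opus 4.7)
The plan is to derive (i) directly from the pathwise construction of $\Phi^N_t$, (ii) from uniqueness of the fixed-point solution in Proposition~\ref{PROP:LWP}, and (iii) by combining (i), (ii), and the independence of increments of the white noise $\xi$. For (i), I would first observe that the stochastic convolution is adapted: by \eqref{sticktested}, the pairing $\jb{\stick_t(\xi), \phi}_{L^2_{t',x}}$ only probes $\xi$ against test functions supported in $[0,t] \times \T^2$, so $\stick_t(\xi)$ is $\F_t$-measurable and equals $\stick_t(\ind_{[0,t]}\xi)$. The contraction argument producing $\w_N$ on $[0,t]$ via \eqref{veqnN} uses $S(t)\u_0$ and $\{\stick_{t'}(\xi)\}_{t' \in [0,t]}$ as its sole stochastic input, so the Picard iterates and hence their almost-sure limit are $\F_t$-measurable, yielding \eqref{adapt} after invoking almost sure uniqueness. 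The $N = \infty$ case is identical.

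For (ii), at $s = 0$ both sides of \eqref{semigroup} coincide with $\Phi^N_t(\u_0,\xi)$ by part (i). Splitting the time integral in \eqref{stick} at $t$ and using the linear semigroup property $S(t+s-t') = S(s)S(t-t')$ for $t' \leq t$ gives
\[
\stick_{t+s}(\xi) = S(s)\stick_t(\ind_{[0,t]}\xi) + \stick_s(R_t\ind_{[t,\infty)}\xi), \qquad s \in [0,h],
\]
and an analogous splitting holds for the Duhamel nonlinear term in \eqref{veqnN}. Substituting these decompositions shows that $s \mapsto \Phi^N_{t+s}(\u_0,\xi)$ and $s \mapsto \Phi^N_s(\Phi^N_t(\u_0,\ind_{[0,t]}\xi), R_t\ind_{[t,\infty)}\xi)$ satisfy the same mild equation \eqref{veqnN} on $[0,h]$ with the same initial data, so uniqueness in Proposition~\ref{PROP:LWP} forces them to agree.

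For (iii), given $F \in \B_b(\X)$ I would condition on $\F_t$: applying (ii) and then using that $\Phi^N_t(\u_0,\ind_{[0,t]}\xi)$ is $\F_t$-measurable by (i), while $R_t\ind_{[t,\infty)}\xi$ is, by the covariance identity \eqref{whiteprop}, independent of $\F_t$ and has the same law as $\xi$, the tower property gives
\[
\E\bigl[F(\Phi^N_{t+h}(\u_0,\xi)) \,\big|\, \F_t\bigr] = (\Pt{h}^N F)(\Phi^N_t(\u_0,\xi)),
\]
which is simultaneously the Markov property and the semigroup identity $\Pt{t+h}^N = \Pt{t}^N \Pt{h}^N$. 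The main technical obstacle I expect is that \eqref{semigroup} is claimed almost surely \emph{for every} $t,h \geq 0$, while the uniqueness statement in Proposition~\ref{PROP:LWP} only yields a null set possibly depending on $(\u_0,t,h)$. To circumvent this, I would first establish \eqref{semigroup} for rational $(t,h)$, where only a countable union of null sets is discarded, and then extend to all $(t,h)$ using the continuity of $(t, \u_0) \mapsto \Phi^N_t(\u_0,\xi)$ afforded by Proposition~\ref{PROP:LWP} together with Lemma~\ref{PROP:stickZ}.
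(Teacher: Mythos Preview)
Your proposal is correct and follows the standard route: adaptedness from the pathwise definition of the stochastic convolution and the Picard iterates, the cocycle identity \eqref{semigroup} from splitting the Duhamel integrals at time $t$ and invoking uniqueness, and the Markov property from conditioning combined with the independence and stationarity of the noise increments. The paper itself does not give a proof of this lemma; it simply refers to \cite[Proposition~4.1, Lemma~4.2, and Proposition~4.3]{FT1}, and the argument you outline is essentially what one finds there.

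One minor point: in part~(ii) you should be explicit that the uniqueness you invoke is for the $\w$-equation \eqref{veqnd}/\eqref{veqnNd} with data $\w_0 = 0$ but with the \emph{shifted} linear input $S(s)\Phi^N_t(\u_0,\xi) + \stick_s(R_t\ind_{[t,\infty)}\xi)$, which requires checking that this input lies in $C([0,h];X^\alpha)$ almost surely. This follows from $\Phi^N_t(\u_0,\xi)\in X^\alpha$ (Proposition~\ref{PROP:LWP}) together with the regularity of the stochastic convolution (Lemma~\ref{PROP:stickZ}), but it is worth stating. Your handling of the null-set issue via rational times and continuity is also the right way to upgrade to the ``for every $t,h$'' statement.
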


In fact, more than Lemma~\ref{LEM:Markov} (iii) is true; namely, the maps $\{\Phi_{t}^{N}\}_{t\geq 0}$ satisfy the strong Markov property (see \cite[Section 4]{FT1}), but we will not need this extra information in this article.

\section{Modified measures and sample path properties}
\label{SEC:measures}

\subsection{Modified measures}

The Gaussian measure $\vec{\mu}_s$ in \eqref{mus} can be formally written as 
\begin{align*}
d\vec{\mu}_{s}= Z_{s}^{-1} e^{-G_{s}(u,v)} du dv,
\end{align*}
where 
\begin{align}
G_{s}(u,v) : = \frac{1}{2}\int_{\T^2}(\jb{\nb}^{s}v)^2  + (\jb{\nb}^{s+1}u)^2   dx. \label{Gs}
\end{align}
Rigorously, we define $\vec{\mu}_s :=\text{Law}(\u^{\o})= \text{Law}((u^{\o},v^{\o}))$, where $(u^{\o}, v^{\o})$ denote the following pair of random Fourier series:
\begin{align*}
u^{\o}(x) & = \sum_{n\in \Z^2} \frac{g_{n}(\o)}{\jb{n}^{2s+2}}e^{in\cdot x} \quad \text{and} \quad  v^{\o}(x) = \sum_{n\in \Z^2} \frac{h_{n}(\o)}{\jb{n}^{2s}}e^{in\cdot x},
\end{align*}
where $\{g_n\}_{n\in \Z^2}$ and $\{h_n\}_{n\in \Z^2}$ are families of standard complex-valued Gaussian random variables conditioned so that $g_{n}=\cj{g_{-n}}$ and $h_n=\cj{h_{-n}}$. More precisely, we define the index set 
\begin{align*}
\Ld : = (\Z \times \Z_{+})\cup (\Z_{+}\times \{0\}) \cup \{(0,0)\},
\end{align*}
and let $\{g_n,h_n\}_{n\in \Ld}$ be standard complex valued Gaussian random variables, with $g_0,h_0$ real-valued, and then define $g_{-n}= \cj{g_n}$, $h_{-n}=\cj{h_n}$ for $n\in \Z^2$.

Given $N\in 2^{\N_0}$, we define the marginals
\begin{align}
\vec{\mu}_{s,N}&:=(\Pi_{\leq N})_{\ast}\vec{\mu}_s =\text{Law}(( \Pi_{\leq N}u^{\o}, \Pi_{\leq N}v^{\o})),\\
\vec{\mu}_{s,N}^{\perp}&:=(\Pi_{> N})_{\ast}\vec{\mu}_s =\text{Law}(( \Pi_{> N}u^{\o}, \Pi_{> N}v^{\o})).
\end{align}
 By independence, we then have the decomposition
\begin{align}
d\vec{\mu}_{s} = d\vec{\mu}_{s,N} \otimes d\vec{\mu}_{s,N}^{\perp}. \label{mudecomp}
\end{align}
We let $\mathcal{V}_{N}$ be the real vector space
\begin{align*}
\mathcal{V}_{N} = \text{span}\{ 1, \cos(n\cdot x) , \sin(n\cdot x):\, n\in \Ld_{N}^{\ast}\},
\end{align*}
where $\Ld_{N}^{\ast}: = \{n\in \Z^2\,:\, 0<|n|\leq N\}\cap \Ld$, and put the usual scalar product on $\mathcal{V}_{N}$. We endow $\mathcal{V}_N \times \mathcal{V}_N$ with a Lebesgue measure $L_N$ as follows: 
with 
\begin{align*}
\Pi_{\leq N} u (x) = u_{N}(x)  =\sum_{|n|\leq N} \ft u_n e^{in\cdot x}, \quad \ft u_{-n} = \cj{\ft u_n},
\end{align*}
we put $a_0=\ft{u}(0)$, $a_{n,1}= \text{Re}\, \ft u_n$, and $a_{n,2}=\text{Im}\, \ft u_n$ so that 
\begin{align}
u_{N}(x) = a_0 +\sum_{n\in \Ld_{N}^{\ast}} \{ a_{n,1}\, 2\cos(n\cdot x) + a_{n,2}(-2\sin(n\cdot x))\}. \label{uNdecomp}
\end{align}
We then define $L_N$ as the Lebesgue measure on $\mathcal{V}_N \times \mathcal{V}_N$ with respect to the orthogonal basis 
\begin{align*}
\big\{ 1, \{ 2 \cos(n\cdot x) , -2\sin(n\cdot x)\}_{n\in \Ld_{N}^{\ast}}\big\}\times \big\{ 1, \{ 2 \cos(n\cdot x) , -2\sin(n\cdot x)\}_{n\in \Ld_{N}^{\ast}}\big\}.
\end{align*}

For every $N\in \mathbb{N}$, the Gaussian measure $\vec{\mu}_{s,N}^{\perp}$ is invariant under the dynamics of the high-frequency equation \eqref{NLWuNperp}. Indeed, a tedious but straightforward computation shows that
\begin{align*}
\E[ |\mathcal{F}\{ \pi_1 \u_{N}^{\perp}\}(t,n)|^{2}] =\jb{n}^{-2s-2} \quad \text{and} \quad \E[ |\mathcal{F}\{ \pi_2 \u_{N}^{\perp}\}(t,n)|^{2}] =\jb{n}^{-2s}
\end{align*}
for all $t\in \R$, and since $\u_{N}^{\perp}$ evolves linearly, Gaussianity is preserved.

As discussed in \cite{OTz2, GOTW}, it is not clear how to study the quasi-invariance  of the measure $\vec{\mu}_{s}$ in \eqref{mus} under the nonlinear flow of \eqref{SDNLW} directly. Indeed, a renormalisation is needed to make sense of the time evolution of $G_{s}$ in \eqref{Gs}. Thus, following \cite{GOTW}, we define 
\begin{align}
\begin{split}
\s_{N} &: = \E_{\vec{\mu}_{s}} \bigg[ \int_{\T^2} (\jb{\nb}^{s}\Pi_{\leq N} u (x))^2 dx \bigg]  =  \sum_{|n|_{\infty}\leq N} \frac{1}{\jb{n}^{2}} \sim \log N. 
\end{split} \label{sigma1}
\end{align}
Then, we set 
\begin{align}
Q_{s,N}(u) : = (\jb{\nb}^{s} \Pi_{\leq N}u)^{2} -\s_{N}. \label{Qs}
\end{align}
We now define a modified version of \eqref{Gs} by 
\begin{align}
E_{s,N}(u,v) = G_{s}(\Pi_{\leq N}u, \Pi_{\leq N}v) + R_{s,N}(u),
\end{align}
where 
\begin{align}
R_{s,N}(u) : = \frac{3}{2}\int_{\T^2} Q_{s,N}(u)(\Pi_{\leq N}u)^{2}dx + \frac{1}{4}\int_{\T^2}(\Pi_{\leq N}u)^4 dx. \label{RsN}
\end{align}
It follows from Parseval's theorem that 
\begin{align}
E_{s,N}(u,v)  = \EE_{s,N}(u, v) + H_{N}(u,v), \label{EsN}
\end{align}
where
\begin{align}
\begin{split}
\EE_{s,N}(u, v) & = \frac{1}{2}\int_{\T^2} (\jb{\nb} m(\nb) \Pi_{\leq N}u)^2 +(m(\nb)\Pi_{\leq N}v)^{2} + \frac{3}{2}\int_{\T^2} Q_{s,N}(u)(\Pi_{\leq N}u)^{2}dx,
\end{split}\label{EEsN} 
\end{align}
where $m(\nb):= \sqrt{\jb{\nb}^{2s}-1}$,
and $H_{N}$ is the energy in \eqref{HN}.

The following lemma justifies the construction of the modified measures.

\begin{lemma}\label{LEM:nus}
Let $s>0$ and $1\leq p<\infty$. 
Then, there exists $R_{s}\in L^{p}(\vec{\mu}_{s})$ such that $R_{s,N}\to R_{s}$ in $L^{p}(\vec{\mu}_s)$ as $N\to \infty$.
Moreover, there exists $C_{p}>0$ such that 
\begin{align}
\sup_{N\in \N} \big\| e^{-R_{s,N}(u)}\big\|_{L^{p}(\vec{\mu}_{s})} \leq C_p <\infty. \label{unifbd}
\end{align}
Furthermore, 
\begin{align}
\lim_{N\to \infty} \big\| e^{-R_{s,N}(u,v)} - e^{-R_{s}(u,v)}\big\|_{L^{p}(\vec{\mu}_s)}=0, \label{eRLpconv}
\end{align}
and thus, for any $\s<s$, the restriction to $\H^{\s}(\T^2)$ of the measures 
\begin{align}
d\vec{\nu}_{s,N}:=Z_{s,N}^{-1} \, e^{-R_{s,N}(u)} d\vec{\mu}_{s} \label{nusN}
\end{align}
converge to a measure
\begin{align}
d\vec{\nu}_{s}= Z_{s}^{-1} e^{-R_{s}(u)} d\vec{\mu}_{s} \label{nus}
\end{align}
 in total variation, where $Z_{s,N}, Z_{s} \in (0,\infty)$ are defined so that $\vec{\nu}_{s,N}$ and $\vec{\nu}_{s}$ are probability measures.
 In particular, $\vec{\nu}_{s}$ and $\vec{\mu}_s$ are equivalent; that is, $\vec{\nu}_{s} \ll \vec{\mu}_{s}$ and vice versa.
\end{lemma}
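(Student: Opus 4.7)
The plan is to carry out the standard Wick-renormalised Gibbs measure construction in three steps, in the spirit of Nelson and \cite{GOTW}: (i) $L^p(\vec{\mu}_s)$-convergence of $R_{s,N}$ to a limit $R_s$; (ii) a uniform-in-$N$ exponential integrability bound $\sup_N \|e^{-R_{s,N}}\|_{L^p(\vec{\mu}_s)}<\infty$; and (iii) passage to the limit for the measures via Vitali's convergence theorem.

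For (i), split
\begin{equation*}
R_{s,N}(u)=\tfrac{3}{2}\int_{\T^2}Q_{s,N}(u)(\Pi_{\leq N}u)^2\,dx+\tfrac{1}{4}\int_{\T^2}(\Pi_{\leq N}u)^4\,dx.
\end{equation*}
Since $s>0$, the Sobolev embedding $H^{1+s-\eps}\embeds L^\infty$ ensures $u\in L^\infty$ almost surely under $\vec{\mu}_s$, and the quartic summand converges in $L^p(\vec{\mu}_s)$ by dominated convergence combined with standard Gaussian moment bounds. The first summand lies in Wiener chaoses of order at most four, so by Gaussian hypercontractivity its $L^p$ convergence reduces to an $L^2(\vec{\mu}_s)$ computation, in which the subtraction of $\s_N$ precisely eliminates the logarithmically divergent diagonal pairing. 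The remaining sums
\begin{equation*}
\sum_{\substack{n_1+n_2+n_3+n_4=0\\ n_1+n_2\neq 0}} \frac{\jb{n_1}^{2s}\jb{n_2}^{2s}}{\jb{n_1}^{2+2s}\jb{n_2}^{2+2s}\jb{n_3}^{2+2s}\jb{n_4}^{2+2s}}
\end{equation*}
converge absolutely for every $s>0$, giving $R_s\in\bigcap_{p<\infty}L^p(\vec{\mu}_s)$.

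The exponential integrability in (ii) is the main technical obstacle. Since $\int_{\T^2} u^4\geq 0$ and $(\jb{\nb}^s u)^2\geq 0$ pointwise, the only way $R_{s,N}$ can reach large negative values is through the subtracted counter-term $-\tfrac{3}{2}\s_N\int_{\T^2} u^2$, whose coefficient diverges as $N\to\infty$. The Wick structure of $Q_{s,N}$ is precisely designed to cancel this divergence in the appropriate Wiener chaos: pairing $Q_{s,N}$ against $u^2$ by duality in $H^{-\eta}\times H^\eta$ for small $\eta\in(0,s)$ and applying the fractional Leibniz rule \eqref{Leibniz} bounds $|\int_{\T^2} Q_{s,N}(u)u^2\,dx|$ by a product of $\|Q_{s,N}(u)\|_{H^{-\eta}}$, which lies in the second chaos with moments bounded uniformly in $N$, and norms of $u$ with Gaussian tails. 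A Young-type inequality then absorbs a small multiple of $\|u\|_{L^4}^4$ into $\tfrac{1}{4}\int u^4$, leaving a remainder whose stretched-exponential tails are uniform in $N$, and hence yielding the desired uniform exponential integrability. The careful calibration of exponents in this Nelson-type estimate is the delicate point of the proof.

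For (iii), steps (i) and (ii) together imply that $\{e^{-R_{s,N}}\}_N$ is uniformly integrable in $L^p(\vec{\mu}_s)$ for every $p<\infty$, and combined with the in-probability convergence $R_{s,N}\to R_s$ from (i) (along a subsequence, for almost sure convergence), Vitali's convergence theorem gives \eqref{eRLpconv}. Taking $p=1$ yields the total-variation convergence $\vec{\nu}_{s,N}\to\vec{\nu}_s$ after normalisation; the normalising constant $Z_s$ is finite by \eqref{unifbd}, and strictly positive because $R_s<\infty$ almost surely (from $R_s\in L^p$), so $e^{-R_s}>0$ on a set of full $\vec{\mu}_s$-measure. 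The density $Z_s^{-1}e^{-R_s}$ is thus $\vec{\mu}_s$-a.s.\ strictly positive and integrable, which gives the mutual equivalence $\vec{\nu}_s\sim\vec{\mu}_s$.
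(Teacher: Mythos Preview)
Your steps (i) and (iii) are correct and agree with the paper's treatment: convergence of $R_{s,N}$ is a Wiener-chaos computation, and \eqref{eRLpconv} follows from \eqref{unifbd} plus convergence in measure via uniform integrability. The divergence is in step (ii).

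The paper does \emph{not} use a Nelson-type tail estimate. Instead it applies the Bou\'e--Dupuis variational formula (Lemma~\ref{LEM:var3}): one bounds $\log\E[e^{-R_{s,N}(Y_N)}]$ by the supremum over drifts $\Theta\in H^{1+s}$ of $-R_{s,N}(Y_N+\Theta_N)-\tfrac12\|\Theta\|_{H^{1+s}}^2$. The crucial advantage is that the good term $\tfrac12\|\Theta\|_{H^{1+s}}^2$ becomes available, and after expanding $R_{s,N}(Y_N+\Theta_N)$ one absorbs every $\Theta$-dependent contribution into $\|\Theta_N\|_{L^4}^4+\|\Theta_N\|_{H^{1+s}}^2$, leaving only polynomial moments of $\|Y_N\|_{C^{s-\eps}}$ and $\|Q_{s,N}(Y_N)\|_{C^{-\eps}}$, which are uniformly bounded by Lemma~\ref{LEM:Dr}.

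Your sketched Nelson argument, by contrast, has a gap at the decisive point. Carrying out your Young inequality literally gives
\[
-R_{s,N}\le C\bigl(\|Q_{s,N}(u)\|_{H^{-\eta}}\,\|u_N\|_{W^{\eta,4}}\bigr)^{4/3},
\]
and since $\|Q_{s,N}(u)\|_{H^{-\eta}}$ lives in the second Wiener chaos it has only exponential tails $\exp(-c\lambda)$, not Gaussian ones. Consequently the right-hand side has tails no better than $\exp(-c\mu^{3/4})$, which is \emph{not} sufficient for $\E[e^{-pR_{s,N}}]<\infty$; ``stretched-exponential tails'' in the usual sense $\exp(-c\lambda^\beta)$ with $\beta<1$ precisely fail to give exponential integrability. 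A correct Nelson argument requires the classical frequency-splitting trick: for each level $\lambda$ choose an auxiliary cutoff $M=M(\lambda)$, use the deterministic lower bound $R_{s,M}\ge -C\sigma_M^2\sim -C(\log M)^2$ coming from the quartic term, and control $|R_{s,N}-R_{s,M}|$ via its decaying $L^2$-norm $\lesssim M^{-\theta}$; optimising $M$ in $\lambda$ then yields super-exponential decay of $\vec{\mu}_s(R_{s,N}<-\lambda)$. You do not mention this mechanism, and without it the argument does not close.
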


The convergence of $\{R_{s,N}\}_{N\in \N}$ was essentially proved in \cite[Lemma 3.4]{OTz2}.
The uniform bound \eqref{unifbd} is possible in view of the positivity of the term $\frac{1}{4}\int u^4 dx$ in \eqref{RsN}. Our proof of Lemma~\ref{LEM:nus} is by now a standard application of the Bou\'e-Dupuis variational formula \cite{BD, Ust, BG}. 
However, to our knowledge a proof for the whole range $s>0$ does not appear in the literature, so for the reader's convenience, we will present some of the details in Section~\ref{SEC:BD}.

 Note that 
\begin{align}
Z_{s,N}:=\int e^{-R_{s,N}(u)}d\vec{\mu}_{s}(u) = \int e^{-R_{s,N}(u_N)}d\mu_{s,N}(u_N). \label{eRsN}
\end{align}
In particular, by Jensen's inequality and Lemma~\ref{LEM:nus}, there is a $C>0$ such that
\begin{align}
\inf_{N\in \N} Z_{s,N} \geq C>0. \label{ZNlower}
\end{align}

\subsection{Pathwise properties}

The goal of this section is to establish some properties that sample paths of $\vec{\nu}_s$ and $\xi$ satisfy, building up to the uniform bounds on the truncated flows $\Phi^{N}_{t}$ in Lemma~\ref{LEM:Calcontrol} and Lemma~\ref{LEM:Qscontrol}. 
We begin with samples of $\vec{\mu}_{s}$, which enjoy the following regularity property.

\begin{lemma}\label{LEM:Zmus}
Let $\al<s$.
For $\u\sim \vec{\mu}_{s}$, we have $\u\in Z^{\al}\subset \CC^{\al}$ almost surely.
Moreover, 
\begin{align}
\E \bigg[ \sup_{N\in 2^{\N_0} \cup\{\infty\}}\| \Pi_{\leq N} \Phi_{t}^{\textup{lin}}(\u_0,\xi)\|_{C([0,T];\CC^{\al})}^{p} \bigg] \leq C(p,T). \label{uniflin}
\end{align}
%\begin{align}
%\int e^{c\|\u\|_{Z^{\al}}} d\vec{\mu}_{s}(\u) <\infty, 
%\label{expint}
%\end{align}
%for any $c\in \R$. 
\end{lemma}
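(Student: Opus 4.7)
The strategy is to mirror the proof of Lemma \ref{PROP:stickZ} to cover also the deterministic-data piece $S(t)\u_0$ appearing in the decomposition $\Phi_{t}^{\textup{lin}}(\u_0, \xi) = S(t)\u_0 + \stick_t(\xi)$; together with the bounds already available for the stochastic convolution this reduces both assertions to proving the analogous bounds for $S(t)\u_0$ with $\u_0 \sim \vec{\mu}_s$. Note in particular that the first assertion $\u \in Z^\alpha \subset \CC^\alpha$ a.s.\ is the statement at $t = 0$ with $\xi = 0$, and that the $\CC^\alpha$-norm in \eqref{uniflin} is controlled by the $Z^\alpha$-norm by evaluating \eqref{Znorm} at $t = 0$.

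The key ingredient is an explicit covariance computation. Using the form of the propagator in \eqref{linsol} together with $\E[|\widehat{u_0}(n)|^2] \sim \jb{n}^{-2(s+1)}$ and $\E[|\widehat{v_0}(n)|^2] \sim \jb{n}^{-2s}$ from the Gaussian structure of $\vec{\mu}_s$, one readily obtains
\begin{equation*}
\E\bigl[|\widehat{\pi_j S(t)\u_0}(n)|^2\bigr] \lesssim e^{-t}\jb{n}^{-2(s+2-j)}, \qquad j = 1,2,
\end{equation*}
where the factor $e^{-t}$ comes from the damping prefactor in \eqref{linsol} and the off-diagonal matrix entries carry compensating powers of $\jb{n}^{\pm 1}$ exactly matching the difference between the regularities of $u_0$ and $v_0$. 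A parallel bound for the time derivative, using $|\partial_t M(t, n)| \lesssim \jb{n}$ on the entries of the matrix in \eqref{linsol}, supplies the time-increment estimate required by Kolmogorov's criterion.

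Feeding these covariance bounds into the argument of the proof of Lemma \ref{PROP:stickZ}, namely decomposing $[0, \infty)$ into unit intervals $[T', T'+1]$, applying Lemma \ref{LEM:Kolm} with Banach space $\CC^\alpha$, and combining the Sobolev embedding $\W^{\alpha + \frac{2}{p}, p} \hookrightarrow \CC^\alpha$ for $p > \frac{2}{s - \alpha}$ with Gaussianity (which reduces $L^p_\omega L^p_x$ to $L^p_x L^2_\omega$), one obtains
\begin{equation*}
\E\bigl[\|S(T')\Pi_{\leq N}\u_0\|_{\CC^\alpha}^p\bigr]^{\frac{1}{p}} \lesssim e^{-T'/2}
\end{equation*}
uniformly in $N$. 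Summing over $T' \in \N$ weighted by $e^{T'/8}$ yields both $\E[\|\u_0\|_{Z^\alpha}^p] < \infty$ and the $\CC^\alpha$-bound needed for \eqref{uniflin}; uniformity in $N$ is achieved by the telescoping trick of Lemma \ref{PROP:stickZ} (writing $\Pi_{\leq N} = \Pi_{\leq 1} + \sum_{M \leq N/2}(\Pi_{\leq 2M} - \Pi_{\leq M})$ to expose an extra summable factor $M^{-(s - \alpha - 2/p)}$ on each dyadic block, then Minkowski in $M$). The refinement that $\u \in Z^\alpha$ rather than merely $\overline{Z}^\alpha$ follows by applying the same estimate to the high-frequency tail $\u - \Pi_{\leq N}\u$, which then converges to zero in the $Z^\alpha$-norm. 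Thus the main technical point is the covariance computation above; once that is in place, the remainder is a direct transposition of the argument already used for the stochastic convolution.
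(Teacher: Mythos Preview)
Your proposal is correct and follows essentially the same approach as the paper's proof: both reduce to controlling $S(t)\u_0$ with $\u_0\sim\vec{\mu}_s$, compute the covariance bound $\E[|\widehat{\pi_j S(t)\u_0}(n)|^2]\lesssim e^{-t}\jb{n}^{-2(s+2-j)}$, feed it through Sobolev embedding $\W^{\alpha+2/p,p}\hookrightarrow\CC^\alpha$, Gaussian hypercontractivity, and Lemma~\ref{LEM:Kolm} on unit intervals, then sum the exponential decay to get the $Z^\alpha$ bound; for \eqref{uniflin} both invoke the telescoping trick of Lemma~\ref{PROP:stickZ} to exchange $\sup_N$ for a summable $M^{-\theta}$.
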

\begin{proof}
Clearly, the membership in $Z^{\al}$ is stronger than that in $\CC^{\al}$, so we focus on the former.  
Using Sobolev embedding for some $p<\infty$ large enough so that $\frac{2}{p}+\al<s$, with $q\geq p$, Lemma~\ref{LEM:Kolm}, we have
\begin{align*}
\E[ \|\u\|_{Z^{\al}}^{q}] & \les \sum_{j=0}^{\infty} e^{\frac{j}{8}} \E\Big[ \sup_{t\in [j,j+1]} \|S(t)\u\|_{\W^{\al+\frac{2}{p},p}}^{q} \Big] \\
& \les  \sum_{j=0}^{\infty} e^{\frac{j}{8}} \bigg\{ \E\Big[ \|S(j)\u\|_{\W^{\al+\frac{2}{p},p}}^{q}\Big] + \sup_{j\leq t' <t\leq j+1} \E\bigg[\frac{ \| S(t)\u - S(t')\u\|_{\W^{\al+\frac{2}{p},p}}^{q}}{|t-t'|^{1+\g q +\eps}} \bigg] \bigg\},
\end{align*}  
for some $\g>0$ to chosen later.
It will suffice to control the norms of just the first components of the vectors.  For each fixed $x\in \T^2$, $\jb{\nb}^{\al+\frac 2p} \pi_1 S(j)\u (x)$ is a mean zero Gaussian random variable with a variance bounded by 
\begin{align}
\text{Var}(\jb{\nb}^{\al+\frac 2p} \pi_1 S(j)\u (x))\les e^{-j} \sum_{n} \frac{\jb{n}^{2\al+\frac{4}{p}}}{\jb{n}^{2+2s}} \les e^{-j}. \label{varSu}
\end{align}
By Minkowski's inequality, the Gaussianity of $\jb{\nb}^{\al+\frac 2p} \pi_1 S(j)\u (x)$ and \eqref{varSu},
\begin{align*}
\E[ \|\pi_1 S(j)\u\|_{W^{\al+\frac{2}{p},p}}^{q}]^{\frac 1q} \leq  \bigg\|  \E[ |\jb{\nb}^{\al+\frac 2p} \pi_1 S(j)\u (x)|^{q}]^{\frac 1q}   \bigg\|_{L^{p}_{x}} \les e^{-\frac{j}{2}}q^{\frac 12}.
\end{align*}
The factor $e^{-j/2}$ ensures the summability of the first term in $j$. 
For the second term we argue similarly using the mean value theorem to see that for each fixed $x\in \T^2$, $\jb{\nb}^{\al+\frac 2p} \pi_1 S(t)\u (x)-\jb{\nb}^{\al+\frac 2p} \pi_1 S(t')\u (x)$ is a mean zero Gaussian random variable with variance bounded by 
\begin{align*}
e^{-j} \sum_{n\in \Z} \frac{\jb{n}^{2\ta+2\al+\frac{4}{p}}|t-t'|^{2\ta}}{ \jb{n}^{2+2s}  } \les e^{-j} |t-t'|^{2\ta},
\end{align*}
provided that $0<\ta< \min(1, s-\al-\frac{2}{p})$. Thus, 
\begin{align*}
 \sup_{j\leq t' <t\leq j+1} \E\bigg[\frac{ \| S(t)\u - S(t')\u\|_{\W^{\al+\frac{2}{p},p}}^{q}}{|t-t'|^{1+\g q +\eps}} \bigg]  \les e^{-q\frac{j}{2}} q^{\frac{q}{2}},
\end{align*}
provided that $\g<\ta-\frac{1}{q}-\frac{\eps}{q}$ which is possible by taking $q$ sufficiently large and $\eps$ sufficiently small at the beginning. 
We then obtain 
\begin{align*}
\E[ \|\u\|_{Z^{\al}}^{q}] \les q^{\frac 12}.
\end{align*}

As for \eqref{uniflin}, this follows in a similar way as for \eqref{momentsstick}. In particular, by \eqref{Philin} and the triangle inequality, we only need to prove the bound for $S(t)\u_0$, which further reduces to showing
\begin{align}
\E\big[ \| \Pi_{\leq 1}S(t)\u_0\|_{C([0,T];\CC^{\al})}^{p}\big] & \les_{p,T} 1,\label{diffu02}\\
\E \big[  \| \Pi_{\leq 2M}S(t)\u_0-\Pi_{\leq M}S(t)\u_0\|_{C([0,T];\CC^{\al})}^{p}] &\les_{p,T} M^{-\ta} \label{diffu01}
\end{align}
for some $\ta>0$, and every $M\in 2^{\N_0}$. Here, \eqref{diffu02} just follows from Sobolev embedding and \eqref{SonH}.
Now, \eqref{diffu01} follows similar arguments as above in showing $\u_0\in Z^{\al}$, and we gain the factor $M^{-\ta}$ from localisation to frequencies around $M$ in the sum \eqref{varSu}. 
%and thus
%\begin{align}
%\mathbb{P}( \|\u\|_{Z^{\al}} > \ld) \les e^{-c\ld^2}\label{ProbZ}
%\end{align}
%for any $\ld\geq 0$. Now \eqref{ProbZ} implies \eqref{expint} by Fubini's theorem.
\end{proof}

%\begin{remark}\rm 
%Combining \eqref{expint} and Proposition~\ref{PROP:stickZ} with \eqref{vmoments} ensures that 
%\begin{align}
%\int \sup_{N\in \N\cup \{\infty\}} \E[ \|\w_{N}(t)\|_{\H^{1+\al}}^{p}] d\vec{\mu}_{s} \leq C <\infty
%\end{align}
%for any $t\geq 0$.
%\end{remark}

As our arguments in Section~\ref{SEC:lifts} require us to also control norms of the time evolution of the function $Q_{s,N}(u)$, we need to be more precise here about this quantity. For $u\in C^{\al}(\T^2)$, we define 
\begin{align}
Q_{s}(u) := \lim_{N\to \infty} Q_{s,N}(u) \label{Qslim}
\end{align}
in $H^{\al-s}(\T^2)$, whenever this limit exists. 

\begin{lemma} \label{LEM:Qprops}
Let $\al<s$ but sufficiently close to $s$. Suppose that
$u\in C^{\al}(\T^2)$ is such that $Q_{s}(u)$ exists in the sense of \eqref{Qslim}, and let $w\in H^{1+\al}(\T^2)$. Then, $Q_{s}(u+w)$ exists in the sense of \eqref{Qslim} and satisfies
\begin{align}
Q_{s}(u+w) = Q_{s}(u)  + 2(\jb{\nb}^s u)(\jb{\nb}^s w)+(\jb{\nb}^s w)^2. \label{Qsum}
\end{align}
In particular, if $u$ is distributed according to $\mu_{s+1}$, then almost surely $Q_{s}(u)$ exists, and moreover, $Q_{s}(u)\in C^{-\eps}(\T^2)$ for any $\eps>s-\al$, and $Q_{s,N}(u)\to Q_{s}(u)$ in $C^{-\eps}(\T^2)$. 
Finally,  for $\u_0$ distributed according to $\vec{\mu}_{s}$, $Q_{s}(\pi_1 \Phi_{t}^{\textup{lin}}(\u_0,\xi))$ exists almost surely 
and satisfies
\begin{align}
 \E\bigg[  \sup_{N\in 2^{\N_0}\cup \{\infty\}} \|Q_{s,N}(\pi_{1}\Phi_{t}^{\textup{lin}}(\u_0,\xi))\|_{C([0,T];H^{-\s})}^p\bigg] \leq C(T,p). \label{Qslin2}
\end{align}
for any $1\leq p<\infty$, $\s>0$, and where $\Phi_{t}^{\textup{lin}}$ is the linear flow  defined in \eqref{uN}.  
\end{lemma}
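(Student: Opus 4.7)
The plan is to address the four assertions in order, starting from a purely algebraic identity at the truncated level and then leveraging Gaussian/Wiener-chaos tools for the statements involving the measures $\mu_{s+1}$ and $\vec{\mu}_s$.

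For the first claim, I would begin by writing out the trivial identity
\begin{equation*}
Q_{s,N}(u+w) = Q_{s,N}(u) + 2(\jb{\nb}^s \Pi_{\leq N} u)(\jb{\nb}^s \Pi_{\leq N} w) + (\jb{\nb}^s \Pi_{\leq N} w)^2,
\end{equation*}
which follows from the definition \eqref{Qs} of $Q_{s,N}$ since the renormalisation constant $\s_N$ only compensates the $u$--quadratic part. To pass to $N\to\infty$, I would argue term by term: $Q_{s,N}(u) \to Q_s(u)$ holds by hypothesis; the pure quadratic in $w$ converges in a positive-regularity Sobolev space (via the fractional Leibniz rule \eqref{Leibniz}), since $\jb{\nb}^s w \in H^{1+\al-s}$ with $1+\al-s > 0$ once $\al$ is chosen close enough to $s$; the cross term is the product of $\jb{\nb}^s u \in C^{\al-s}$ (negative regularity) with $\jb{\nb}^s w \in H^{1+\al-s}$ (positive regularity), handled by the product estimate \eqref{prod1} or, equivalently, by the paraproduct decomposition of Lemma~\ref{LEM:para}. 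This yields convergence of both $w$-terms in a space in which $Q_s(u)$ also lives, so \eqref{Qsum} follows.

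For the second and third claims, I would set $X_N := \jb{\nb}^s \Pi_{\leq N} u$ for $u \sim \mu_{s+1}$; this is a centred Gaussian field whose covariance kernel behaves like $\sum_{|n|_\infty \leq N} \jb{n}^{-2} e^{in\cdot(x-y)}$, i.e.\ logarithmically divergent at the diagonal. Then $Q_{s,N}(u) = X_N^2 - \E[X_N^2]$ lies in the second homogeneous Wiener chaos. My plan is to show Cauchyness of $\{Q_{s,N}(u)\}_N$ in $L^p(\O; C^{-\eps})$ for any $\eps > 0$: using the Besov characterisation of $B^{-\eps/2}_{p,p}$ in terms of Littlewood--Paley blocks and the embedding $B^{-\eps/2}_{p,p} \embeds C^{-\eps}$ for $p$ large, I would compute explicitly
\begin{equation*}
\E \|P_K(Q_{s,N}(u)-Q_{s,M}(u))\|_{L^2_x}^2
\end{equation*}
by Wick's formula and obtain polynomial decay in $K$ and in $\min(N,M)$ from the summability gained after the Littlewood--Paley projection. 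Gaussian hypercontractivity then promotes the $L^2(\O)$ bound to all $L^p(\O)$, giving both existence and the claimed convergence in $C^{-\eps}$.

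For the uniform bound \eqref{Qslin2}, I would use that $\vec{\mu}_s$ is invariant under the linear flow, hence $\pi_1 \Phi^{\mathrm{lin}}_t(\u_0,\xi) \sim \mu_{s+1}$ at every fixed $t$; combined with the previous paragraph, this gives uniform-in-$N$ control of the one-point $L^p(\O; C^{-\eps})$ norm. To upgrade to $C([0,T]; H^{-\s})$, I would apply the Kolmogorov criterion of Lemma~\ref{LEM:Kolm} and, for the time-increment term, invoke the identity from the first part with $u = \pi_1 \Phi^{\mathrm{lin}}_{t'}$ and $w = \pi_1(\Phi^{\mathrm{lin}}_t - \Phi^{\mathrm{lin}}_{t'})$. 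By Lemma~\ref{PROP:stickZ} and Lemma~\ref{LEM:Zmus}, the increment $w$ gains Hölder regularity in $t$ at the cost of a small spatial loss, uniformly in $N$; second-chaos variance computations, again promoted by hypercontractivity, then yield
\begin{equation*}
\E \big\| Q_{s,N}(\pi_1\Phi^{\mathrm{lin}}_t) - Q_{s,N}(\pi_1\Phi^{\mathrm{lin}}_{t'}) \big\|_{H^{-\s}}^p \les_{p,T} |t-t'|^{\g p}
\end{equation*}
for some $\g>0$, uniformly in $N$, which feeds into \eqref{Kolm} to close \eqref{Qslin2}.

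The main technical obstacle I anticipate lies in the variance computations of paragraphs two and three: one must exploit the precise resonance structure of the second Wiener chaos so that the Fourier sums producing $\E|P_K(Q_{s,N}-Q_{s,M})|^2$ admit decay both in the Littlewood--Paley parameter $K$ and, in the time-increment version, in $|t-t'|$, uniformly in the truncation parameters. This is purely a calculation, but it requires careful bookkeeping of the contributions from $S(t)\u_0$ and $\stick_t(\xi)$, whose independence and explicit covariances (coming from \eqref{linsol} and \eqref{sticktested}) are the decisive inputs.
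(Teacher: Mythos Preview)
Your proposal is correct and follows essentially the same route as the paper: the truncated algebraic identity for $Q_{s,N}(u+w)$, second-Wiener-chaos variance computations promoted by hypercontractivity, and the Kolmogorov criterion of Lemma~\ref{LEM:Kolm} for continuity in $t$. One small point to make explicit: controlling the $\sup_N$ \emph{inside} the expectation in \eqref{Qslin2} requires the telescoping/Cauchy-in-$N$ argument you already outlined for the $\mu_{s+1}$ claim (the paper reduces to dyadic differences $Q_{s,2M}-Q_{s,M}$ with $M^{-\dl}$ decay), not merely bounds that are uniform over each fixed $N$; and when you ``invoke the identity from the first part'' for the time increment, note that $w=\pi_1(\Phi^{\mathrm{lin}}_t-\Phi^{\mathrm{lin}}_{t'})$ is \emph{not} in $H^{1+\al}$, so you are using only the truncated algebraic identity followed by a probabilistic variance estimate, not the deterministic product bounds --- which is exactly what the paper does via the explicit Fourier formula for $z(t,n)$.
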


\begin{proof}
For each fixed $N\in \N$, it follows by the definition \eqref{Qs} that
\begin{align}
Q_{s,N}(u+w) = Q_{s,N}(u) + 2(\jb{\nb}^{s}\Pi_{\leq N}u)(\jb{\nb}^s \Pi_{\leq N}w)+ (\jb{\nb}^s \Pi_{\leq N}w)^2. \label{QsNsum}
\end{align}
Then, the existence of $Q_{s}(u+w)$ is ensured by verifying that the right hand side of \eqref{QsNsum} converges as $N\to \infty$. The term $Q_{s,N}(u)$ converges by our assumption on $u$. For the mixed term $(\jb{\nb}^{s}\Pi_{\leq N}u)(\jb{\nb}^s \Pi_{\leq N}w)$, we use \eqref{prod2} which is controlled provided that $1+\al-s>s-\al$, which holds as long as $\al<s$ is sufficiently close to $s$. Similarly, $(\jb{\nb}^{s} \Pi_{\le N}w)^{2}$ converges to $(\jb{\nb}^{s} w)^{2}$ in $H^{\al-s}$, as long as $1+\al-s>s-\al$.

The second claim regarding functions $u$ distributed according to $\mu_{s+1}$ is essentially shown in  \cite[(4.6), Proposition 4.3]{GOTW} and the required details are already encapsulated in showing \eqref{Qslin2}, which we move onto now.

 By a telescoping series argument, it is enough to show that
\begin{align}
\E\big[ \| Q_{s,1}(\pi_1 \Phi^{\text{lin}}_{t}(\u_0,\xi))\|_{C([0,T];H^{-\s})}^{p}\big] & \les_{p,T} 1,\label{Q0}\\
\E \big[  \| Q_{s,2M}(\pi_1 \Phi^{\text{lin}}_{t}(\u_0,\xi))-Q_{s,M}(\pi_1 \Phi^{\text{lin}}_{t}(\u_0,\xi))\|_{C([0,T];H^{-\s})}^{p}] &\les_{p,T} M^{-\dl} \label{Q1}
\end{align}
for $1\leq p<\infty$ and some $\dl>0$. In particular, \eqref{Q1} also establishes that the sequence $\{Q_{s}(\pi_1\Phi_{t}^{\text{lin}}(\u_0,\xi))\}_{N}$ is Cauchy in this sense and thus  $Q_{s}(\pi_{1} \Phi_{t}^{\text{lin}}(\u_0,\xi))$ exists in $L^p( \O; C([0,T];H^{-\s}))$. We only consider \eqref{Q1} as simpler arguments suffice for \eqref{Q0}.

By Lemma~\ref{LEM:Kolm}, we have 
\begin{align}
\begin{split}
&\E \big[  \| Q_{s,2M}(\pi_1 \Phi^{\text{lin}}_{t}(\u_0,\xi))-Q_{s,M}(\pi_1 \Phi^{\text{lin}}_{t}(\u_0,\xi))\|_{C([j,j+1];H^{-\s})}^{p}]  \\
& \les  \E \big[  \| \mathcal{Q}_{s,M}(t)\|_{H^{-\s}}^{p}]  +\sup_{j\leq t'<t\leq j+1}    \E\bigg[  \frac{ \| \mathcal{Q}_{s,M}(t)-\mathcal{Q}_{s,M}(t') \|_{H^{-\s}}^{p}   }{|t-t'|^{1+p\g +\eps}}   \bigg]    ,
\end{split} \label{Qsnorm1}
\end{align}
where $\g<\ta -\frac{1+\eps}{p}$, for some $0<\ta<1$,
 and $$\mathcal{Q}_{s,M}(t): =Q_{s,2M}(\pi_1 \Phi^{\text{lin}}_{t}(\u_0,\xi))-Q_{s,M}(\pi_1 \Phi^{\text{lin}}_{t}(\u_0,\xi)).$$
 For each fixed $t\geq 0$, the law of $\Phi_{t}^{\text{lin}}(\u_0,\xi)$ is $\vec{\mu}_{s}$, so then
\begin{align*}
\E\Big[ \|\mathcal{Q}_{s,M}(j)\|_{H^{-\s}}^{2} \Big] &= \E\Big[ \|\mathcal{Q}_{s,M}(0)\|_{H^{-\s}}^{2} \Big]\\
  &\les \sum_{n\neq 0} \frac{1}{\jb{n}^{2\s}}  \sum_{ \substack{n=n_{1}+n_2\\ \exists j\in \{1,2\}, |n_j|\sim M}} \prod_{j=1}^{2} \frac{1  }{\jb{n_1}^2 \jb{n_2}^{2}} + \sum_{M<|n|\leq 2M} \frac{1}{\jb{n}^{4}} \\
  & \les (\log M )M^{-2\s} +M^{-2},
\end{align*}
which is acceptable as $\s>0$. The Wiener Chaos estimate (see \cite[Theorem I.22]{Simon} and \cite[Proposition 2.4]{ThTz}) then implies a similar result for any finite $p>2$.
Moreover, by the Wiener Chaos estimate, for the second term in \eqref{Qsnorm1}, it is enough to show that 
\begin{align}
\E\Big[ \|\mathcal{Q}_{s,M}(t)-\Q_{s,M}(t')\|_{H^{-\s}}^{2} \Big] \les  |t-t'|^{2\ta}, \label{Qsdifft}
\end{align}
where $\g <\ta-\frac{1+\eps}{p}$ and for some $0<\ta<1$. By expanding the definition of the $H^{-\s}$ norm, \eqref{Qsdifft} follows from 
\begin{align}
\E\Big[ \big| \mathcal{F}\{\mathcal{Q}_{s,M}(t)-\mathcal{Q}_{s,M}(t')\}(n) \big|^{2} \Big] \les  |t-t'|^{2\ta} \jb{n}^{-2+2\ta+2\dl}M^{-\dl}, \label{Qsdifft2}
\end{align}
for any $n\in \Z$.
% It also follows from the invariance of $\vec{\mu}_{s}$ under the linear flow $\Phi_{t}^{\text{lin}}$, that 
%\begin{align*}
%\s_N  = \E\bigg[ \int_{\T^2} \big( |\nb|^s  \pi_1 \Phi_{t}^{\text{lin}}(\u_0,\xi))^2 dx\bigg]
%\end{align*}
%for all $t\in \R$.
We compute
\begin{align}
\begin{split}
\mathcal{F}\{\Q_{s,M}(t)\}(n) &= 
\sum_{ \substack{n=n_{1}+n_2\\ n\neq 0 \\ \exists j\in \{1,2\}, |n_j|\sim M}}\prod_{j=1}^{2} \frac{z(t,n_j)}{\jb{n_j}} + \ind_{\{n=0\}} \sum_{M<|m|\leq 2M} \frac{|z(t,m)|^2-1}{\jb{m}^2}
\end{split} \label{FTQ}
\end{align}
where
\begin{align*}
z(t,n) : =  e^{-\frac{t}{2}} \bigg(   (   \cos(t\sbb{n})+\tfrac{\sin(t\sbb{n})}{2\sbb{n}} )g_n 
&+ \tfrac{\jb{n}}{ \sbb{n}}\sin(t\sbb{n})h_n  \\
&+ \tfrac{\sqrt{2}\jb{n}}{\sbb{n}} \int_{0}^{t} e^{\frac{t'}{2}}\sin((t-t')\sbb{n})d\be_{n}(t')
 \bigg).
\end{align*}
Then, taking the difference of the first term in \eqref{FTQ} at times $j\leq t'<t\leq j+1$, with the absolute value squared, and using the mean value theorem, we find
\begin{align}
\begin{split}
 \sum_{\substack{n=n_1+n_2 \\  \max(|n_1|,|n_2|)\sim M } } \frac{ \max(\jb{n_1},\jb{n_2})^{2\ta}}{\jb{n_1}^2\jb{n_2}^2} |t-t'|^{2\ta} 
 & \les |t-t'|^{2\ta} \jb{n}^{-2+2\ta+2\dl}M^{-\dl},
 \end{split} \label{sums1}
\end{align}
provided that $0<\ta<1$.
When $n=0$, we need to use that fact that for each fixed $t\geq 0$ and $m\in \Z$, $z(t,m)$ is a standard complex Gaussian random variable so 
\begin{align*}
|z(t,m)|^2 -1 = |z(t,m)|^2- \E[|z(t,m)|^2].
\end{align*}
Then, \eqref{Qsdifft2} follows in this case since 
\begin{align*}
\E[ (|z(t,m_1)|^2-|z(t',m_1)|^2)&(|z(t,m_2)|^2-|z(t',m_2)|^2)]  \\
& = \dl_{m_1,m_2} \E[ (|z(t,m_1)|^2-|z(t',m_1)|^2)^2],
\end{align*}
and thus 
\begin{align*}
\E\Big[ \big| \mathcal{F}\{\mathcal{Q}_{s,M}(t)-\mathcal{Q}_{s,M}(t')\}(0) \big|^{2} \Big] \les |t-t'|^{2\ta} \sum_{|m|\sim M} \frac{1}{\jb{m}^{4-2\ta}} \les |t-t'|^{2\ta} M^{-2+2\ta}
\end{align*}
for any $0<\ta<1$. This completes the proof of \eqref{Qsdifft2} and thus also \eqref{Qsdifft}.
\end{proof}

We now show that \eqref{uniflin} and \eqref{Qslin2} also hold for the nonlinear flow $\Phi_{t}$ of \eqref{SDNLW}. 
To this end, we need to exploit that $\w$ belongs to the higher regularity space $\H^{1+s-\eps}$ for any $\eps>0$, namely, one full degree better than that of the initial data $\u_0$ and the stochastic convolution $\stick_{t}(\xi)$. It is convenient to verify this a posteriori relative to the results from Proposition~\ref{PROP:LWP}. In particular, the following results hold pathwise.

\begin{lemma}\label{LEM:Calcontrol}
Let $0<\al'<\al<s$, $T>0$, and $\u_0 \in X^{\al}$. Then, for every $N\in 2^{\N_0}\cup\{\infty\}$, $\w_{N}(t)\in C([0,T];\H^{1+\al}(\T^2))$, and whenever 
\begin{align}
\sup_{N\in 2^{\N_0}\cup\{\infty\}} \sup_{t\in [0,T]} \Big( \| \Pi_{\leq N}\Phi_{t}^{\textup{lin}} (\u_0,\xi)\|_{\W^{\al,\frac{2}{\al}}} 
+ \|\Pi_{\leq N}\stick_{t}(\xi) \|_{Z^{\al}}  \Big)\leq R \label{wH1aAssumption}
\end{align}
for some $R>0$, there exists $C(T,R)>0$ such that 
\begin{align}
\sup_{N,M\in 2^{\N_0}\cup\{\infty\}}\sup_{t\in [0,T]} \Big\{ \| \Pi_{\leq M} \w_{N}(t)\|_{\CC^{\al'}}+   \| \w_{N}(t)\|_{\H^{1+\al}} \Big\} \leq C(T,R). \label{wH1a}
\end{align}
%\begin{align}
%\int \E\Big[ \sup_{N\in \N\cup \{\infty\}}\sup_{t\in [0,T]} \| \w_{N}(t)\|_{\H^{1+\al}}^{p}\Big] d\vec{\mu}_{s} \leq C(T,p)<\infty, \label{wH1a}
%\end{align}
%for ay $1\leq p<\infty$. 
\end{lemma}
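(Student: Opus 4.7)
The plan is to bootstrap from the a priori $\H^{1}$ bound on $\w_N$ up to the claimed $\H^{1+\al}$ regularity by iteratively estimating the cubic nonlinearity in $H^\beta$ via the Duhamel formula~\eqref{veqnN}. First I would invoke Proposition~\ref{PROP:LWP}(i) directly: the hypothesis~\eqref{wH1aAssumption} controls the $X^\al$-norms of both $\Pi_{\leq N}\u_0$ and $\Pi_{\leq N}\stick_t(\xi)$ (using that $Z^\al \embeds X^\al$), so \eqref{wNbd} yields $\|\w_N\|_{C([0,T];\H^1)} \leq C(R,T)$ uniformly in $N \in 2^{\N_0}\cup\{\infty\}$ when $s<1$; for $s \geq 1$ the same uniform $\H^1$ bound follows from the Gronwall argument on the Hamiltonian $H_N$ from~\eqref{HN} described after Proposition~\ref{PROP:LWP}.

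Next, reading off the explicit propagator formula~\eqref{linsol} and the damping factor, the inhomogeneous piece satisfies $\|S(t-t')\vect{0}{f}\|_{\H^{1+\beta}} \les e^{-(t-t')/2}\|f\|_{H^\beta}$ for every $\beta \geq 0$, since the first component is $e^{-(t-t')/2}\snb^{-1}\sin((t-t')\snb)f$ and the second is $L^2$-bounded in $f$ up to damping. Combined with~\eqref{veqnN} and the uniform-in-$N$ boundedness of the Fourier multiplier $\Pi_{\leq N}$ on every $H^\beta$, this yields
\begin{align*}
\|\w_N(t)\|_{\H^{1+\beta}} \les \int_0^t e^{-(t-t')/2} \|(\pi_1\Pi_{\leq N}(\Phi^{\textup{lin}}_{t'}(\u_0, \xi) + \w_N(t')))^3\|_{H^\beta} \, dt'.
\end{align*}
The core step is to bound this cubic in $H^\beta$. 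For $\beta \leq \min(\al, 1-\eps)$ with $\eps>0$ small, I would apply the fractional Leibniz rule~\eqref{Leibniz} in the form $\|uv\|_{H^\beta}\les \|u\|_{W^{\beta,p_1}}\|v\|_{L^{p_2}} + \|u\|_{L^{p_3}}\|v\|_{W^{\beta,p_4}}$ with $p_1 = 2/\beta$, combined with the Sobolev embedding $H^1 \embeds W^{\beta, 2/\beta}$ on $\T^2$ (both spaces lie on the same scaling line when $\beta \leq 1$), the assumption $\pi_1\Phi^{\textup{lin}} \in \W^{\al, 2/\al}\embeds W^{\beta, 2/\beta}$, and the embeddings of $H^1$ and $W^{\al, 2/\al}$ into every finite $L^p$. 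This yields $\|(\pi_1(\Phi^{\textup{lin}}+\w_N))^3\|_{H^\beta}\les (R+\|\w_N\|_{\H^1})^3\leq C(R,T)$, closing the bound directly when $\al < 1$.

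When $\al \geq 1$ the embedding $H^1 \embeds W^{\al, 2/\al}$ fails, so a single pass through Duhamel is not enough. I would iterate: the first pass gives $\w_N \in \H^{1+\beta_0}$ with $\beta_0 = 1-\eps$, and at each subsequent step the $\w_N$-factors in the fractional-Leibniz decomposition of the cubic absorb more derivatives (coming from the improved $\H^{1+\beta_k}$ bound), while the single $\pi_1\Phi^{\textup{lin}}$-factor carrying all $\al$ derivatives is always placed in $\W^{\al, 2/\al}$ and the remaining two $\Phi^{\textup{lin}}$-factors in some $L^p$ with $p<\infty$ large. Finitely many iterations reach $\beta = \al$ with quantitative control by $C(R,T)$, since the regularity gain per iteration is bounded below uniformly.

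Finally, the $\CC^{\al'}$ estimate on $\Pi_{\leq M}\w_N$ follows by composing three standard facts: the Sobolev embedding $\H^{1+\al} \embeds \W^{\al'+2/p, p}$ for $p$ chosen so that $\al > \al' + 2/p$; the uniform-in-$M$ boundedness of $\Pi_{\leq M}$ on every $\W^{\s,p}(\T^2)$ with $1 < p < \infty$ (from the $L^p$-boundedness of the Hilbert transform on each variable, as recalled in the paper); and the Sobolev embedding $\W^{\al'+2/p,p} \embeds \CC^{\al'}$. The main technical obstacle is the iteration step for $\al \geq 1$: the limited integrability $2/\al \leq 2$ of $\pi_1\Phi^{\textup{lin}}$ prevents naive estimates on $(\pi_1\Phi^{\textup{lin}})^3$ in $H^\al$, and the saving comes from always routing the $\al$ derivatives onto a single factor via fractional Leibniz, letting the remaining factors draw on the increasingly regular $\w_N$ produced by the bootstrap to compensate for the poor integrability of $\pi_1\Phi^{\textup{lin}}$.
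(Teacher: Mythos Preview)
Your proposal is correct and follows essentially the same strategy as the paper: invoke the a~priori $\H^{1}$ bound on $\w_N$ from Proposition~\ref{PROP:LWP}, then feed the Duhamel formula~\eqref{veqnN} through the fractional Leibniz rule, placing the linear piece in its $X^{\al}\sim\W^{\al,2/\al}$ norm and $\w_N$ in $\H^{1}$, to conclude $\w_N\in\H^{1+\al}$.

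Two minor differences are worth noting. First, for the $\CC^{\al'}$ bound the paper takes the shorter route: since $\H^{1+\al}$ is $L^{2}$-based, $\Pi_{\leq M}$ is bounded on it directly, and then one simply uses $\H^{1+\al'+\eps}\embeds\CC^{\al'}$; your detour through $\W^{\al'+2/p,p}$ and the $L^{p}$-boundedness of $\Pi_{\leq M}$ is correct but unnecessary. Second, your bootstrap iteration for $\al\geq 1$ is a careful elaboration that the paper's terse proof does not spell out; the paper's one-line estimate~\eqref{tightv} is written with the case $0<\al<1$ in mind (where $H^{1}\embeds W^{\al,2/\al}$ closes in a single pass), and your iteration is the natural way to cover the remaining range.
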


\begin{proof}
By \eqref{veqnNd}, \eqref{SonH}, the fractional Leibniz rule, and the definitions of the space $X^{\al}$, we get
\begin{equation}
\begin{aligned}
\|\w_N (t)\|_{\H^{1+\al}} &\les \int_{0}^{t}e^{-\frac{t-t'}{2}}\big(\|S(t') \u_0 + \stick_{t}(\xi)\|_{\X}^3 +\|\w_N(t')\|_{\H^{1}}^{3} \big) dt'. 
\end{aligned}
\label{tightv}
\end{equation}
Now, the bound in $\H^{1+\al}$ in \eqref{wH1a} follows from \eqref{wH1aAssumption} and \eqref{wNbd}. For the $\CC^{\al'}$-norm in \eqref{wH1a}, we use the embedding $\H^{1+\al'+\eps}\embeds \CC^{\al'}$ for any $\eps>0$, the boundedness of $\Pi_{\leq M}$ on $\H^{1+\al}$, and the bound in $\H^{1+\al}$ that we just proved.
\end{proof}

We now obtain a uniform control on the transport of the renormalised quantities $Q_{s,M}$.

\begin{lemma}\label{LEM:Qscontrol}
Let $0<\al<s$ such that $s-\al < \frac 12$. Given $T>0$ and $R> 0$, suppose that
\begin{align*}
\sup_{M\in 2^{\N_0}}\sup_{t\in [0,T]}\big( \| Q_{s,M}(\pi_1 \Phi_{t}^{\textup{lin}}(\u_0,\xi))\|_{H^{\al-s}} + \|\Pi_{\leq M}\Phi_{t}^{\textup{lin}}(\u_0,\xi)\|_{\CC^{\al}} + \| \Pi_{\leq M} \stick_{t}(\xi)\|_{ Z^{\al}} \big)\leq R.
\end{align*}
Then, there exists a constant $C(T,R)>0$ such that 
\begin{align}
\sup_{N,M\in 2^{\N_0}}\sup_{t\in [0,T]} \| Q_{s,M}(\pi_1 \Phi_{t}^{N}(\u_0,\xi))\|_{H^{\al-s}} \leq C(T,R). \label{Qsnonlinbd}
\end{align}
\end{lemma}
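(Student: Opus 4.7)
The proof plan exploits the decomposition
\[
\Phi_t^N(\u_0, \xi) = \Phi_t^{\textup{lin}}(\u_0, \xi) + \w_N(t),
\]
which follows from \eqref{PhiN}--\eqref{uN} together with the observation that $\u_N^\perp(t)$ coincides with $\Pi_{>N}\Phi_t^{\textup{lin}}(\u_0, \xi)$, since the linear propagator and the stochastic convolution commute with frequency projection. The first step is to apply Lemma~\ref{LEM:Calcontrol} to obtain a uniform $\H^{1+\al''}$-bound on $\w_N$. Although Lemma~\ref{LEM:Calcontrol} is stated with hypothesis in $\W^{\al,2/\al}$, this is achievable with a harmless loss of regularity: on $\T^2$, since $\jb{\nb}^{\al''}$ maps $C^\al$ into $C^{\al-\al''} \hookrightarrow L^\infty \hookrightarrow L^{2/\al''}$, we have $\CC^\al \hookrightarrow \W^{\al'',2/\al''}$ for every $\al'' \in (0,\al)$. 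Fixing such an $\al''$ close enough to $\al$ that $s - \al'' < \tfrac12$ (possible thanks to the hypothesis $s - \al < \tfrac12$), Lemma~\ref{LEM:Calcontrol} yields
\[
\sup_{N \in 2^{\N_0}} \sup_{t \in [0,T]} \|\w_N(t)\|_{\H^{1+\al''}} \leq C(T,R).
\]

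With this in hand, write $\Pi_{\leq M}\pi_1 \Phi_t^N = \Pi_{\leq M}\pi_1 \Phi_t^{\textup{lin}} + \Pi_{\leq M}\pi_1 \w_N$ and expand
\[
Q_{s,M}(\pi_1 \Phi_t^N) = Q_{s,M}(\pi_1 \Phi_t^{\textup{lin}}) + 2(\jb{\nb}^s \Pi_{\leq M}\pi_1 \Phi_t^{\textup{lin}})(\jb{\nb}^s \Pi_{\leq M}\pi_1 \w_N) + (\jb{\nb}^s \Pi_{\leq M}\pi_1 \w_N)^2.
\]
Set $\tilde s := s - \al > 0$ and $\tilde s'' := s - \al'' \in (\tilde s, \tfrac12)$. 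The first term is bounded in $H^{-\tilde s}$ by the hypothesis. For the cross term, the first factor lies in $C^{-\tilde s}$ uniformly in $M$ (since $\jb{\nb}^s$ maps $C^\al$ into $C^{\al-s}$), while the second lies in $H^{1-\tilde s''}$ uniformly in $N,M$ (by the previous step and $L^p$-boundedness of $\Pi_{\leq M}$). Applying the product estimate \eqref{prod2} with a $\delta$ satisfying $0 < \delta < \min\{2(1-\tilde s),\; 1-\tilde s - \tilde s''\}$ then gives the desired $H^{-\tilde s}$-bound by $C(T,R)$; both constraints are simultaneously positive because $\tilde s'' < \tfrac12$. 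For the quadratic remainder, Sobolev embedding $H^{1-\tilde s''}(\T^2) \hookrightarrow L^{2/\tilde s''}$ places the square in $L^{1/\tilde s''}$, which embeds into $H^{-\tilde s}$ by duality provided $\tilde s'' \leq \tfrac{1+\tilde s}{2}$ --- again achievable by choosing $\al''$ sufficiently close to $\al$.

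The main obstacle is the tight interplay of regularities: the hypothesis gives only $\CC^\al$ control on $\Pi_{\leq M}\Phi_t^{\textup{lin}}$ (so $\jb{\nb}^s$ of it lives at the negative regularity $-\tilde s$), while $\w_N$ only gains one full derivative over the linear flow, landing in $H^{1-\tilde s''}$. The condition $s - \al < \tfrac12$ is precisely what allows \eqref{prod2} to multiply a $C^{-\tilde s}$ distribution by an $H^{1-\tilde s''}$ function and still land in $H^{-\tilde s}$, and the same margin underpins the success of the quadratic estimate. Any weakening of this condition would force an alternative mechanism (for instance, exploiting additional cancellation in the renormalisation $Q_{s,M}$ via paraproduct decompositions) that is not needed here.
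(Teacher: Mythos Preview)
Your proof is correct and follows essentially the same strategy as the paper: decompose $\Phi_t^N = \Phi_t^{\textup{lin}} + \w_N$, invoke Lemma~\ref{LEM:Calcontrol} for a uniform $\H^{1+\al''}$-bound on $\w_N$, expand $Q_{s,M}$ via \eqref{QsNsum}, and estimate each of the three pieces using \eqref{prod2} and Sobolev embedding. The paper glosses over the passage from the $\CC^\al$ hypothesis to the $\W^{\al,\frac{2}{\al}}$ input of Lemma~\ref{LEM:Calcontrol} (your $\al''$-adjustment makes this explicit), and bounds the square $(\jb{\nb}^s\Pi_{\leq M}\pi_1\w_N)^2$ via $W^{s,4}$ and $H^{s+\frac12}$ rather than your $L^{2/\tilde s''}$ route, but both arguments rely on the same constraint $s-\al<\tfrac12$.
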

\begin{proof}
By Sobolev embedding, our assumption, and \eqref{wH1a},
\begin{align*}
\| (\jb{\nb}^s \pi_1 \Pi_{\leq M} \w_{N}(t))^{2}\|_{H^{\al-s}} &\leq \| \pi_1 \Pi_{\leq M}\w_{N}(t)\|_{W^{s,4}}^{2} \les \|\pi_1 \w_{N}(t)\|_{H^{s+\frac 12}}^{2}    \leq C(R,T),
\end{align*}
uniformly in $M$ and $N$, and where we used that $s+\frac{1}{2}\leq 1+\al$, when $s-\al \leq \frac 12$.
Next, using \eqref{prod2} and \eqref{wH1a}, we get 
\begin{align*}
\| (\jb{\nb}^s \pi_1 &\Pi_{\leq M}\Phi_{t}^{\text{lin}}(\u_0,\xi))( \jb{\nb}^{s}\pi_{1} \Pi_{\leq M}\w_N (t)) \|_{H^{\al-s}}  \\
&\les \|\jb{\nb}^s \pi_1 \Pi_{\leq M}\Phi_{t}^{\text{lin}}(\u_0,\xi)\|_{C^{\al-s}} \| \jb{\nb}^{s}\pi_{1} \w_N (t)\|_{H^{\al-s+\eps}} \\
&\les \|\pi_1 \Pi_{\leq M}\Phi_{t}^{\text{lin}}(\u_0,\xi)\|_{C^{\al}} \| \pi_{1} \w_{N}(t)\|_{H^{1+\al}} \\
& \leq C(R,T),
\end{align*}
provided that $\eps<1$.
Combining these estimates, recalling \eqref{PhiN} and \eqref{uN} and then applying \eqref{Qsum} and the triangle inequality yields \eqref{Qsnonlinbd}.
\end{proof}

\subsection{Proof of Lemma~\ref{LEM:nus}} \label{SEC:BD}

First, we note that \eqref{eRLpconv} follows from \eqref{unifbd} and the convergence in measure from of \eqref{RsN}, which in turn follows from immediately from Lemma~\ref{LEM:Qprops}. We thus focus on establishing \eqref{unifbd}. We set $p=1$ as the argument is identical for any finite $p>1$.
We now state the Bou\'e-Dupuis variational formula \cite{BD, Ust}; the current version can be found in    \cite[Lemma 2.6]{FT2}.

\begin{lemma}\label{LEM:var3}
Let $Y$ be distributed according to $(\pi_{1})_{\ast}\vec{\mu}_{s}$ and $N \in \N$.
Suppose that  $F:C^\infty(\T^2) \to \R$
is measurable such that $\E\big[|F_{-}(\Pi_{\leq N}Y)|^p\big] < \infty$ for some $p>1$, where $F_{-}$ denotes the negative part of $F$. Then, we have
\begin{align}
\log \E\Big[e^{-F(\Pi_{\leq N} Y)}\Big]
\leq 
\E\bigg[ \sup_{\Dr\in H^{1+s}}\Big\{ -F(\Pi_{\leq N} Y + \Pi_{\leq N} \Dr) - \tfrac{1}{2} \| \Dr \|_{H^{1+s}_x}^2 \Big\} \bigg], 
\label{P3}
\end{align}
\noi
and the expectation $\E = \E_{\mathbb{P}}$
is an 
expectation with respect to the underlying probability measure~$\mathbb{P}$. 
\end{lemma}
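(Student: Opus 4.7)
The plan is to reduce \eqref{P3} to the classical Boué-Dupuis variational representation by realising $\Pi_{\leq N} Y$ as (the finite-dimensional projection of) the terminal value of a Brownian motion in the Cameron-Martin space $H^{1+s}(\T^2)$. Fixing an orthonormal basis $\{e_n\}$ of $H^{1+s}$ adapted to the Fourier basis, I would set $W_t := \sum_n B_n(t)\, e_n$ for an i.i.d. family of standard one-dimensional Brownian motions $\{B_n\}$; then $W_1 \overset{d}{=} Y$, since both have covariance operator equal to the inverse of the $H^{1+s}$ inner product. Because $\Pi_{\leq N}$ is a projection onto finitely many modes, the functional $G(W) := F(\Pi_{\leq N} W_1)$ in effect depends on only finitely many of the $B_n$, which sidesteps most of the infinite-dimensional subtleties.

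On this (essentially finite-dimensional) Wiener space, the Boué-Dupuis identity \cite{BD, Ust} — itself a direct consequence of Girsanov's theorem combined with the Donsker-Varadhan variational formula for the relative entropy — asserts
\begin{align*}
-\log \E\big[e^{-F(\Pi_{\leq N} Y)}\big] = \inf_{v} \E\bigg[ F\Big(\Pi_{\leq N} Y + \Pi_{\leq N}\int_0^1 v(s)\, ds\Big) + \frac12\int_0^1 \|v(s)\|_{H^{1+s}}^{2}\, ds \bigg],
\end{align*}
where the infimum runs over $\{\F_t\}$-progressively measurable $H^{1+s}$-valued drifts $v$. Writing $\Dr_v := \int_0^1 v(s)\, ds \in H^{1+s}$ and applying Cauchy-Schwarz in $s$ gives $\|\Dr_v\|_{H^{1+s}}^2 \leq \int_0^1 \|v(s)\|_{H^{1+s}}^2\, ds$. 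Negating the identity and using this bound then yields
\begin{align*}
\log \E\big[e^{-F(\Pi_{\leq N} Y)}\big] = \sup_{v}\E\Big[-F(\Pi_{\leq N} Y + \Pi_{\leq N}\Dr_v) - \tfrac12\textstyle\int_0^1\|v\|^2\Big] \leq \sup_{v}\E\Big[-F(\Pi_{\leq N} Y + \Pi_{\leq N}\Dr_v) - \tfrac12\|\Dr_v\|_{H^{1+s}}^2\Big].
\end{align*}
Interchanging supremum and expectation via the trivial inequality $\sup_v \E[\,\cdot\,] \leq \E[\sup_v \cdot\,]$, and then enlarging the range from the random drifts $\Dr_v$ to arbitrary deterministic $\Dr \in H^{1+s}$ (each such $\Dr$ being realised by the constant-in-$s$ drift $v \equiv \Dr$), produces exactly \eqref{P3}.

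The only technical nuisance is the weak integrability hypothesis $\E[|F_-(\Pi_{\leq N} Y)|^p]<\infty$, which permits $F$ to be unbounded below. This is handled by the usual truncation: I would apply the identity to $F_M := \max(F,-M)$, which is bounded below, observe that $-F_M \leq -F$ pointwise, and pass $M \to \infty$ using monotone convergence on $e^{-F_M(\Pi_{\leq N} Y)} \uparrow e^{-F(\Pi_{\leq N} Y)}$ on the left, together with the monotone behaviour of the integrand on the right. The main obstacle I anticipate is the measurability of the pointwise supremum over $\Dr \in H^{1+s}$ appearing on the right-hand side of \eqref{P3}; this is settled by separability of $H^{1+s}$, which allows the supremum to be realised along a fixed countable dense subset.
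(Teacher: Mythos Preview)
The paper does not supply its own proof of this lemma: it is stated as a known result, with the precise formulation attributed to \cite[Lemma 2.6]{FT2} and the underlying identity to \cite{BD, Ust}. Your sketch is correct and is exactly the standard route in those references --- realise $Y$ as the time-$1$ value of a cylindrical Brownian motion on $H^{1+s}$, apply the Bou\'e--Dupuis identity to the (effectively finite-dimensional) functional $W \mapsto F(\Pi_{\le N} W_1)$, weaken the drift cost $\tfrac12\int_0^1\|v\|^2$ to $\tfrac12\|\Theta_v\|^2$ via Jensen, and then bound the supremum over adapted drifts by the pointwise supremum over $\Theta \in H^{1+s}$.

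One small clean-up: the step you phrase as ``interchange $\sup$ and $\E$, then enlarge the range'' is cleaner done in one move. For each fixed admissible drift $v$ and each $\omega$, the random element $\Theta_v(\omega)$ lies in $H^{1+s}$, so pointwise
\[
-F(\Pi_{\le N}Y + \Pi_{\le N}\Theta_v) - \tfrac12\|\Theta_v\|_{H^{1+s}}^2 \le \sup_{\Theta\in H^{1+s}}\Big\{-F(\Pi_{\le N}Y + \Pi_{\le N}\Theta) - \tfrac12\|\Theta\|_{H^{1+s}}^2\Big\};
\]
taking expectations and then $\sup_v$ gives \eqref{P3} directly, without needing to argue that every $\Theta$ is realised by some drift. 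Your observation about separability of $H^{1+s}$ handling the measurability of the pointwise supremum is correct and needed.
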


In the following, we simply write $Y_{N}$ for $\Pi_{N}Y$ and $\Dr_{N}$ for $\Pi_{\leq N}\Dr$.

\begin{lemma}  \label{LEM:Dr}
	Let $s>0$.
Then, given any finite $p \ge 1$, 
we have 
\begin{align*}
\begin{split}
\E 
\Big[ & \|Y_N\|_{C^{s-\eps}}^p
+ \| Q_{s,N}(Y_N)\|_{C^{-\eps}}^p
\Big]
\leq C_{\eps, p} <\infty,
\end{split}
%\label{P4}
\end{align*}
\noi
 for any $\eps>0$, and uniformly in $N \in \N$ 
\end{lemma}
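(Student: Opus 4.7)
The plan is to reduce both bounds to Littlewood--Paley-block estimates, converting the $C^{\al}$ endpoint norms into large-exponent Besov norms via the embedding \eqref{Sobolev}, and then closing each block by Gaussian or Wiener-chaos moment bounds applied pointwise. First, fix $q\ge\max(p,4/\eps)$. Applying \eqref{Sobolev} with $(p_1,p_2)=(\infty,q)$ and then lowering the second Besov index from $q$ to $\infty$ via item (ii) of the Besov embedding lemma, one obtains
\[
\|f\|_{C^{s-\eps}} \les \|f\|_{B^{s-\eps/2}_{q,q}}, \qquad \|f\|_{C^{-\eps}} \les \|f\|_{B^{-\eps/2}_{q,q}}.
\]
By Jensen's inequality it thus suffices to prove uniform-in-$N$ bounds on $\E\big[\|Y_N\|_{B^{s-\eps/2}_{q,q}}^{q}\big]$ and $\E\big[\|Q_{s,N}(Y_N)\|_{B^{-\eps/2}_{q,q}}^{q}\big]$.

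For the first bound I would expand
\[
\E\big[\|Y_N\|_{B^{s-\eps/2}_{q,q}}^q\big] = \sum_{M\in 2^{\N_0}} M^{(s-\eps/2)q}\int_{\T^2}\E\big[|P_M Y_N(x)|^q\big]\,dx.
\]
The crucial observation is that $P_M Y_N(x)$ is a centred real Gaussian whose variance equals $\sum_{|n|_\infty \le N}\varphi_M(n)^2 \jb{n}^{-2(1+s)}$ and is bounded by $C M^{-2s}$ \emph{uniformly in $N$}, since $\Pi_{\le N}$ only restricts the range of summation. Standard Gaussian moment bounds then give $\E|P_M Y_N(x)|^q \le C^q q^{q/2} M^{-sq}$, so the resulting sum $C^q q^{q/2}\sum_M M^{-\eps q/2}$ is finite, using $\eps q > 2$.

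For the second bound, the plan is to combine the same Besov decomposition with Wiener-chaos hypercontractivity. After the subtraction of $\s_N$, the field $Q_{s,N}(Y_N) = (\jb{\nb}^s Y_N)^2 - \s_N$ belongs to the second homogeneous Wiener chaos generated by the Gaussians defining $\vec{\mu}_s$. A direct Fourier computation, analogous to the one carried out in the proof of Lemma~\ref{LEM:Qprops} (and using that the subtraction of $\s_N$ only corrects the $n=0$ mode), yields
\[
\E\big[|\widehat{Q_{s,N}(Y_N)}(n)|^2\big] \les \sum_{\substack{n_1+n_2=n\\|n_j|_\infty \le N}} \jb{n_1}^{-2}\jb{n_2}^{-2} \les \jb{n}^{-2}\log(2+\jb{n})
\]
for $n\ne 0$, and $\E\big[|\widehat{Q_{s,N}(Y_N)}(0)|^2\big] \les 1$. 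Summing against $\varphi_M^{2}$ then gives $\E|P_M Q_{s,N}(Y_N)(x)|^2 \les \log(2+M)$, uniformly in $N$. Wiener-chaos hypercontractivity (\cite[Theorem~I.22]{Simon}) upgrades this pointwise $L^2$ bound to $\E|P_M Q_{s,N}(Y_N)(x)|^q \le C^q q^{q}(\log(2+M))^{q/2}$, and the prefactor $M^{-\eps q/2}$ makes the dyadic sum $\sum_M M^{-\eps q/2}(\log(2+M))^{q/2}$ convergent.

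The main obstacle will be keeping every constant uniform in $N$. For $Y_N$ this is essentially automatic from the variance monotonicity under sharp frequency truncation. For $Q_{s,N}(Y_N)$ it is slightly more subtle because the Wick constant $\s_N$ itself depends on $N$, so one must verify explicitly that the subtraction of $\s_N$ affects only the $n=0$ Fourier mode and that every other mode is a second-chaos element with $N$-uniform variance; this is precisely the content of the Fourier computation above, after which both claims follow.
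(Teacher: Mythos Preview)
Your proposal is correct and follows exactly the standard route: Besov embedding $C^{\alpha}\hookleftarrow B^{\alpha+\delta}_{q,q}$ for large $q$, pointwise Gaussian moments for $Y_N$, and pointwise second-chaos variance plus hypercontractivity for $Q_{s,N}(Y_N)$, with stationarity making the Fourier modes uncorrelated so that the block variance reduces to the diagonal sum. The paper does not give its own proof of this lemma; it simply cites \cite[(4.6), Proposition~4.3]{GOTW}, whose argument is precisely the one you outline.
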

We omit the proof of Lemma~\ref{LEM:Dr} as it follows similarly from \cite[(4.6), Proposition 4.3]{GOTW}.

\begin{proof}[Proof of Lemma~\ref{LEM:nus} \eqref{unifbd}]

In view of \eqref{eRsN}, we apply Lemma~\ref{LEM:var3} with $F=R_{s,N}$. Note that the assumptions in Lemma~\ref{LEM:var3} are trivially satisfied as the bounds may depend on $N$. 
From \eqref{P3}, we have 
\begin{align}
\begin{split}
\log Z_{s,N} 
& \leq\E \Big[ \sup_{\Dr\in H^{1+s}}\Big\{ -R_{s,N}(Y_N +\Dr_N) -\tfrac{1}{2}\|\Dr_N\|_{H^{1+s}}^{2} \Big\}\Big].
\end{split} \label{varform1}
\end{align}
By H\"{o}lder and Young's inequalities, we have 
\begin{align}
\bigg| \int_{\T^2} Y_{N}^{3} \Dr_{N} dx \bigg| +\bigg| \int_{\T^2} Y_{N} \Dr_{N}^3 dx \bigg|& \leq C\|Y_N\|_{L^{4}}^{4} +\frac{1}{100}\|\Dr_{N}\|_{L^{4}}^{4}. \label{L4part}
\end{align}
Thus, \eqref{L4part} implies 
\begin{align}
-\frac{1}{8}\int_{\T^2}(Y_N+\Dr_N)^{4}dx \leq -\frac{1}{16}\|\Dr_N\|_{L^{4}}^{4} + C\|Y_N\|_{L^{4}}^{4}. \label{L4part2}
\end{align}
Next, by \eqref{duality}, \eqref{Leibniz}, \eqref{Besovembeds}, for every $\dl>0$, there exists $C_{\dl}>0$ such that
\begin{align}
\begin{split}
\bigg| \int_{\T^2} Q_{s,N}(Y_N)(Y_N +\Dr_N)^2 dx \bigg| & \leq  \| Q_{s,N}(Y_N)\|_{C^{-\frac{\eps}{2}}} \| (Y_N+\Dr_N)^{2}\|_{B^{\frac{\eps}{2}}_{1,1}} \\
& \les \| Q_{s,N}(Y_N)\|_{C^{-\frac{\eps}{2}}}\|Y_N+\Dr_N\|_{L^2} \|Y_N+\Dr_N\|_{B^{\frac{\eps}{2}}_{2,1}} \\
& \les \| Q_{s,N}(Y_N)\|_{C^{-\frac{\eps}{2}}}\|Y_N+\Dr_N\|_{L^2} \|Y_N +\Dr_N\|_{H^{\eps}} \\
& \leq C_{\dl} (1+\| Q_{s,N}(Y_N)\|_{C^{-\frac{\eps}{2}}})^{2}(1+\|Y_N\|_{H^{\eps}})^{4}  \\
&\hphantom{XXX}+ \dl( \|\Dr_N\|_{L^{4}}^{4} +\|\Dr_N\|_{H^{1+s}}^{2}).
\end{split}
\label{Dsterm1}
\end{align}
By H\"{o}lder and Young's inequalities, we also have
\begin{align}
\bigg| \int_{\T^2} Y_N \Dr_N (Y_N +\Dr_N)^2 dx \bigg| & \leq C_{\dl}(1+\|Y_N\|_{L^{\infty}})^{4} + \dl\|\Dr_N\|_{L^{4}}^{4},
\label{Dsterm2}
\end{align}
for any $\dl>0$.
Thus, by \eqref{Dsterm1} and \eqref{Dsterm2}, we have
\begin{align}
\begin{split}
&-\int_{\T^2} Q_{s,N}(Y_N+\Dr_N)(Y_N +\Dr_N)^2 dx \\
& \leq -  \int_{\T^2} Q_{s,N}(Y_N)(Y_N +\Dr_N)^2 dx -2\int_{\T^2} Y_N \Dr_N (Y_N +\Dr_N)^2 dx \\
& \leq C( \|Q_{s,N}(Y_N)\|_{C^{-\frac{\eps}{2}}}, \|Y_N\|_{H^{\eps}}, \|Y_{N}\|_{L^{\infty}}) + \dl ( \|\Dr_N\|_{L^{4}} + \|\Dr_{N}\|_{H^{1+s}}^{2}),
\end{split} \label{Dsterm3}
\end{align} 
where $C(x,y,z)$ is polynomial in its arguments. Therefore Lemma~\ref{LEM:Dr} implies 
\begin{align*}
\sup_{N\in \mathbb{N}}\E[ C( \|Q_{s,N}(Y_N)\|_{C^{-\frac{\eps}{2}}}, \|Y_N\|_{H^{\eps}}, \|Y_{N}\|_{L^{\infty}}) ] <\infty.
\end{align*}
Combining \eqref{L4part2} and \eqref{Dsterm3} in \eqref{varform1} and choosing $\dl>0$ sufficiently small, we have shown 
\begin{align*}
\log Z_{s,N} \leq C+ \E\big[ \sup_{\Dr \in H^{1+s}}\big\{ -\tfrac{1}{32}\|\Dr_N\|_{L^{4}}^{4} -\tfrac{1}{4}\|\Dr_N\|_{H^{1+s}}^{2} \big\} \big] \leq C,
\end{align*}
uniformly in $N\in \mathbb{N}$. This completes the proof of \eqref{unifbd}.
\end{proof}

\section{On the transported densities}

\subsection{Functional derivatives}

For vector-valued functions $\u $ and $\v$, we define an inner product on $L^2(\T^2)\times L^2(\T^2)$  by
\begin{align*}
\jb{ \u, \v}_{L^2\times L^2} : = \jb{u_1, v_1}_{L^2} + \jb{u_2,v_2}_{L^2},
\end{align*}
and we denote the Fr\'{e}chet derivatives as follows: if $F:\S(\R)^2 \to \R$, then
\begin{align*}
dF\vert_{\u}[ {\bf f}] = \frac{d}{d\ta}\bigg\vert_{\ta=0} F(\u+\ta {\bf f}) =\jb{ D_{\u}F, {\bf f}} = \jb{\pa_{u_1}F, f_1}+\jb{\pa_{u_2}F, f_2}.
\end{align*}
and, if $F$ is also $C^2$, then we define the second functional derivative
\begin{align*}
D^{2}F\vert_{\u }[{\bf f}, {\bf h}] := \frac{d}{d\ta}\bigg\vert_{\ta=0} dF|_{\u+\ta {\bf h}} [ {\bf f}] = \jb{ (D^{2}_{\u} F)[ {\bf h}], {\bf f}}
\end{align*}
and the Hessian $D^{2}_{\u}F$ can be further expanded as
\begin{align*}
D^{2}_{\u}F = 
\begin{pmatrix} \pa_{u_1 u_2}^{2} F & \pa_{u_1 u_2}^{2}F \\  \pa_{u_2 u_1}^{2}F & \pa_{u_2 u_2}^{2}F \end{pmatrix}.
\end{align*}

In the finite dimensional setting, we can relate the functional derivatives $D_{\u_N}$ to the derivatives with respect to the Fourier coefficients $\{a_0, a_{n,1}, a_{n,2}\}_{n\in \Ld_{N}^{\ast}}$ of $u_N$ as in \eqref{uNdecomp}. In particular, by the chain rule, we have
%\footnote{$d(G\circ F)|_{u}[f]= dG|_{F(u)}[dF|_{u}(f)]$.}
\begin{align*}
\pa_{a_{n,1}}F = \Re \ft{\pa_{u_N}F}(n), \quad \pa_{a_{n,2}}F = \Im \ft{\pa_{u_N}F}(n), \quad \text{and} \quad \pa_{a_0}F =  \ft{\pa_{u_N}F}(0),
\end{align*}
so that 
\begin{align*}
\pa_{u_N}F(x) = \pa_{a_{0}}F + \sum_{n\in \Ld_{N}^{\ast}} \big\{ \pa_{a_{n,1}}F \cdot 2\cos(n\cdot x) + \pa_{a_{n,2}}F \cdot (-2\sin(n\cdot x))\big\}.
\end{align*}

\subsection{Transported densities}
In the following, we fix $N\in 2^{\N_0}$ and consider the truncated system  \eqref{NLWuN}. 
Using $\Pi_{\leq N} \xi (t) = \sum_{|n|\leq N} d\be_{n}(t)e^{in\cdot x}$,
\eqref{NLWuN} is then a finite dimensional system of SDEs for the Fourier coefficients $\{ (a_0, a_{n,1},a_{n,2}) ,(b_0, b_{n,1},b_{n,2})\}_{n\in \Ld_{N}^{\ast}}$. 
We use \eqref{uNdecomp} as a bijection between $(u_N,v_N)$ and the coefficients $\{ (a_0, a_{n,1},a_{n,2}) ,(b_0, b_{n,1},b_{n,2})\}_{n\in \Ld_{N}^{\ast}}$. 
We write \eqref{NLWuN} in the Ito formulation:
\begin{align}
d\vect{u_N}{v_N} = \bigg\{   J\nb_{(u_N, v_N)}H(u_N,v_N) -   \vect{0}{v_N}  \bigg\}dt +\sqrt{2} \begin{pmatrix} 0 & 0 \\  0 & \Pi_{\leq N}\jb{\nb}^{-s} \end{pmatrix}  \vect{0}{dW_{t}}, \label{NLWsde}
\end{align}
where 
\begin{align*}
J : = \begin{pmatrix} 0 & 1 \\  -1 & 0 \end{pmatrix}, 
\end{align*}
and $\nb_{(u_N,v_N)} = (\pa_{u_N}, \pa_{v_N})$ is the gradient with respect to the variables $(u_N,v_N)$. 
With respect to $\mathcal{V}_{N} \times \mathcal{V}_N$, we associate to \eqref{NLWsde} 
the (adjoint of the) infinitesimal generator:
\begin{align}
\begin{split}
\L_N^{\ast} F &= -\text{div}_{(u_N,v_N)} \big[ (J\nb H)F \big]   +\text{div}_{(u_N,v_N)} \bigg[ \begin{pmatrix} 0 & 0 \\  0 & 1 \end{pmatrix} \vect{u_N}{v_N} F  \bigg] \\
 &\hphantom{XXX}+ \text{Tr}\bigg[   D_{(u_N,v_N)}^{2} \begin{pmatrix} 0 & 0 \\  
 0 & \Pi_{\leq N}\jb{\nb}^{-2s} \end{pmatrix}F    \bigg]. 
 \end{split} \label{gen}
\end{align}
As $\text{div}_{(u_N,v_N)} J\nb H=0$, we rewrite \eqref{gen} as 
\begin{align}
\L_N^{\ast} F &= -\{H, F\}   + \text{div}_{v_N}( v_N F) +  \text{div}_{v_N}[ \nb_{v_N}( \jb{\nb}^{-2s}F)], \label{gen2}
\end{align}
where 
\begin{align}
\{H,F\}  :  = \jb{ \pa_{v_N}H, \pa_{u_N}F} - \jb{\pa_{u_N}H, \pa_{v_N}F}, \label{Poisson}
\end{align}
and 
\begin{align*}
\text{div}_{v_N} [ F(v)]: =  \text{div}_{(b_0, b_{1,n},b_{2,n})} F(v) = \pa_{b_0} F +
 \sum_{n=1}^{N} \big( \pa_{b_{1,n}}F + \pa_{b_{2,n}}F\big).
\end{align*}
Note that
\begin{align*}
\text{div}_{v_N} [ v_N F] &= (|\Ld_{N}^{\ast}|+1)F + \jb{v_N, \pa_{v_N}F}.
\end{align*}

Let $\u_{N}(t)$ denote the almost sure global-in-time solution to \eqref{NLWsde} with initial data $\u_{0,N}$, as guaranteed by Proposition~\ref{PROP:LWP}, and recall the decomposition
\begin{align*}
\u_N (t; \u_0,\xi)  =S(t) \u_{0,N} + \Pi_{\leq N} \stick_{t}(\xi) + \w_{N}(t).
\end{align*}
These solutions generate a Markov semigroup
\begin{align*}
\wt{\mathcal{P}}_{t}F(\u_{0,N}) = \E[ F( \u_N (t; \u_0,\xi) )],
\end{align*}
and the push-forward measure $\g_{t,N}:=(\wt{\mathcal{P}}_{t})_{\ast} L_N$ is a weak solution to the Fokker-Planck equation:
\begin{equation}
\begin{cases}\label{FP}
\dt \g_{t,N} =\L^{\ast} \g_{t,N} \\
\g_{t,N} \vert_{t=0} = e^{-E_{s,N}(u_N,v_N)} dL_{N},
\end{cases} 
\end{equation}
where $E_{s,N}$ is as in \eqref{EsN}. See \cite[Proposition 1.3.1]{BKRS}.

%More precisely, $h_{t}^{N}$ is the density with respect to the Lebesgue measure $L_N$ on $\mathcal{V}_N \times \mathcal{V}_N$ for the system of SDEs \eqref{NLWsde} written in terms of the Fourier coefficients $\{ (a_0, a_{n,1},a_{n,2}) ,(b_0, b_{n,1},b_{n,2})\}_{n\in \Ld_{N}^{\ast}}$. Namely, \eqref{FP} is a PDE in $\R^{2(|\Ld_{N}^{\ast}|+1)}$. 

%has a transported density $h_{t}^{N}$ with respect to Lebesgue measure $L_{N}$ that 

\begin{lemma}\label{LEM:htN}
For each $N\in 2^{\N_0}$, there exists $h_{t}^{N} \in C^{1,2}([0,\infty)\times \R^{2(|\Ld_{N}^{\ast}|+1)})$  such that $\g_{t,N} = h_{t}^{N} dL_{N}$, and $h_{t}^{N}$ satisfies
\begin{align}
\dt h_{t}^{N}(\u_{0,N}) \vert_{t=0} = \L^{\ast} ( e^{-E_{s,N}(\u_{0,N})})=-\{H, \EE_{s,N}\}(\u_{0,N})e^{-E_{s,N}(\u_{0,N})}, \label{ht0}
\end{align}
and
\begin{align}
 \{ H,\EE_{s,N}\}(\u_{0,N})=  \jb{v_N, 3\jb{\nb}^{s}[ \jb{\nb}^s u_N \cdot u_N^2]-\jb{\nb}^{2s}(u_N^3) + 3Q_{s,N}(u_N)u_N}.  \label{HEbracket}
\end{align}
\end{lemma}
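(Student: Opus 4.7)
The plan proceeds in three stages. First, for the existence and $C^{1,2}$-regularity of $h_t^N$: the truncated system \eqref{NLWsde} is a finite-dimensional It\^o SDE on $\mathcal{V}_N \times \mathcal{V}_N$ with smooth polynomial drift and constant (degenerate) diffusion matrix acting only on the $v_N$-coordinates. The Hamiltonian coupling $du_N = v_N\,dt$ ensures that the Lie brackets $[X_0, X_{n,i}]$ of the drift with each diffusion vector field $X_{n,i} = \sqrt{2}\langle n\rangle^{-s}\partial_{b_{n,i}}$ have a non-trivial $\partial_{a_{n,i}}$ component, so H\"ormander's bracket condition holds at every point and $\mathcal{L}$ is hypoelliptic. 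Combined with the smooth strictly positive initial density $e^{-E_{s,N}}$, standard parabolic regularity for the Fokker--Planck equation $\partial_t \gamma_{t,N} = \mathcal{L}^* \gamma_{t,N}$ yields $h_t^N \in C^{1,2}$. Evaluating this PDE at $t=0$ identifies $\partial_t h_t^N|_{t=0}$ with $\mathcal{L}^*(e^{-E_{s,N}})$.

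Second, to reduce $\mathcal{L}^*(e^{-E_{s,N}})$ to the Hamiltonian-only form in \eqref{ht0}, the key structural observation is that $E_{s,N} = \mathcal{E}_{s,N} + H_N$ has $v_N$-dependence only through $\tfrac12 \langle v_N, \langle\nabla\rangle^{2s} v_N\rangle$, which coincides exactly with the $v_N$-part of the log-density of the stationary Gaussian measure for the associated linear OU subsystem. A direct expansion using \eqref{gen2} then shows that the damping term $\mathrm{div}_{v_N}(v_N e^{-E_{s,N}})$ and the diffusion term $\mathrm{div}_{v_N}[\nabla_{v_N}(\langle\nabla\rangle^{-2s} e^{-E_{s,N}})]$ contribute equal and opposite multiples of $e^{-E_{s,N}}$ (involving the dimensional constant $|\Lambda_N^*|+1$ and the quantity $\langle v_N, \langle\nabla\rangle^{2s} v_N\rangle$), and therefore cancel. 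What remains is the Hamiltonian contribution $-\{H, e^{-E_{s,N}}\}$; the chain rule converts this to an expression in $\{H, E_{s,N}\}$, and the observation that $\{H, H_N\}(u_N, v_N) = 0$ (which follows because $H = H_N$ on the low-frequency subspace and any high-frequency residual is annihilated upon pairing against $v_N$) reduces it to a multiple of $\{H, \mathcal{E}_{s,N}\}$ as required.

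Third, for the explicit form \eqref{HEbracket}, I compute the Fr\'echet derivatives using the variation $dQ_{s,N}(u)[f] = 2(\langle\nabla\rangle^s \Pi_{\leq N} u)(\langle\nabla\rangle^s \Pi_{\leq N} f)$ together with the self-adjointness of $\langle\nabla\rangle^s$ and $\Pi_{\leq N}$, which produces on the low-frequency locus the first-order contribution $3\langle\nabla\rangle^s[\langle\nabla\rangle^s u_N \cdot u_N^2] + 3 Q_{s,N}(u_N) u_N$ inside $\partial_u \mathcal{E}_{s,N}$, while $\partial_v\mathcal{E}_{s,N} = m(\nabla)^2 v_N$. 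Substituting these together with $\partial_u H = \langle\nabla\rangle^2 u_N + u_N^3$ and $\partial_v H = v_N$ into \eqref{Poisson}, the quadratic cross-pairings $\langle v_N, \langle\nabla\rangle^2 m(\nabla)^2 u_N\rangle$ and $\langle \langle\nabla\rangle^2 u_N, m(\nabla)^2 v_N\rangle$ cancel by the self-adjointness of $\langle\nabla\rangle^2 m(\nabla)^2$, and expanding $m(\nabla)^2 = \langle\nabla\rangle^{2s} - 1$ in the residual cubic pairing $\langle u_N^3, m(\nabla)^2 v_N\rangle$ assembles the remaining terms into the stated formula. The main obstacle is Stage~1: one must establish smoothness of the density for the \emph{degenerate} Fokker--Planck equation via H\"ormander's theorem (or an equivalent parabolic regularity result); the computations in Stages~2 and~3 are then algebraic bookkeeping centred on the fluctuation--dissipation cancellation between the damping drift and the hypoelliptic noise.
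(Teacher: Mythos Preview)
Your proposal is correct and follows essentially the same route as the paper: H\"ormander's condition for the regularity of $h_t^N$, the fluctuation--dissipation cancellation between $\mathrm{div}_{v_N}(v_N\,e^{-E_{s,N}})$ and the diffusion term to isolate the Poisson-bracket contribution, the reduction $\{H,E_{s,N}\}=\{H,\mathcal{E}_{s,N}\}$ via $\{H,H_N\}=0$ on $\mathcal{V}_N\times\mathcal{V}_N$, and then the explicit computation of the functional derivatives of $H$ and $\mathcal{E}_{s,N}$. Your write-up is in fact slightly more explicit than the paper's (you spell out the bracket $[X_0,X_{n,i}]$ needed for hypoellipticity and the self-adjointness cancellation of the quadratic cross-terms $\langle v_N,\langle\nabla\rangle^2 m(\nabla)^2 u_N\rangle$), but the argument is the same.
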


\begin{proof}
As the coefficients of the operator $\L^{\ast}$ are smooth and the initial datum $e^{-E_{s,N}(\u_{0,N})}$ is smooth, H\"{o}rmander's condition ensures that $\g_{t,N}=h_{t}^{N}dL_{N}$ and
 $h_{t}^{N}\in C^{\infty, \infty}([0,\infty)\times \R^{2(|\Ld_{N}^{\ast}|+1)})$.
The first equality in \eqref{ht0} follows from \eqref{FP} and the second equality follows from \eqref{gen2}, \eqref{Poisson}, \eqref{EsN}, and the fact that $\{H,H\}=0$.  
 In particular, we have
\begin{align*}
 \text{div}_{v_N}[ \nb_{v_N}( \jb{\nb}^{-2s}e^{-E_{s,N}})]=-(|\Ld_{N}^{\ast}|+1)e^{-E_{s,N}} - \jb{v_N, \pa_{v_N}e^{-E_{s,N}}} =- \text{div}_{v_N}( v_N e^{-E_{s,N}}).
\end{align*}
Finally, for \eqref{HEbracket}, a direct computation from \eqref{EEsN} shows that 
\begin{align*}
\pa_{u_N}\EE_{s,N} (u_N,v_N) &=  (\jb{\nb}^{2s}-1)\jb{\nb}^{2}u_{N}+ 3 \jb{\nb}^{s}( \jb{\nb}^{s}u_N \cdot u_N^2) +3 Q_{s,N}(u_N) u_N, \\
 \pa_{v_N}\EE_{s,N} (u_N,v_N) &= \jb{\nb}^{2s}v_N-v_{N},
\end{align*}
and from \eqref{HN}, we have 
\begin{align*}
\pa_{u_N}H(u_N,v_N) = u_N - \Delta u_N +u_N^3 \quad \text{and} \quad \pa_{v_N}H(u_N,v_N)  = v_N.
\end{align*}
Then \eqref{HEbracket} follows from \eqref{Poisson}, and using that $\jb{\nb}^{2}=1-\Delta$. 
\end{proof}

We define
\begin{align}
g_{t}^{N} (\u_{0,N}) := e^{E_{s,N}(\u_{0,N})} h_{t}^{N} (\u_{0,N}) , \label{gtN}
\end{align}
which is the transported density of the truncated probability measure 
\begin{align}
d\vec{\rho}_{s,N}: = \wt{Z}_{s,N}^{-1} e^{-E_{s,N}(\u_{0,N})} dL_{N}= Z_{s,N}^{-1} e^{-R_{s,N}(\u_0)} d\vec{\mu}_{s,N}. \label{rhosN}
\end{align}
We extend $g_{t}^{N}$ to all of $X^{\al}$ by defining $g_{t}^{N}(\u_0) := g_{t}^{N}(\Pi_{\leq N}\u_0)$. 
In the following key result, we show that $g_{t}^{N}$ is the transported density of the measure $ (\Pt{t}^{N})^{\ast}\vec{\nu}_{s,N}$ with respect to $\vec{\nu}_{s,N}$ as well as obtain good long-time $L^p$-bounds for $g_{t}^{N}$ when restricted to compact sets.

\begin{lemma}\label{LEM:gtN}
Let $s>0$ and $N\in 2^{\N_0}$.
Then, it holds that 
\begin{align}
g_{t}^{N}(\u_0) = \frac{d  (\Pt{t}^{N})^{\ast}\vec{\nu}_{s,N}  }{ d\vec{\nu}_{s,N}}.
\label{gnusN}
\end{align}
Moreover,  for $\al>0$ such that $s-\al<\frac{1}{3}\min(1,s)$ and $K\subseteq \R^{2(\Ld_{N}^{\ast}+1)}$ a compact set, there exists $0<\tau(K,N)\leq 1$ such that
\begin{align}
|g_{t}^{N}(\u_{0,N})|\leq 1+ t ( \{H,\EE_{s,N}\}(\u_{0,N})+1) \label{gtNbd}
\end{align}
for all $0\leq t \leq \tau(K,N)$ and $\u_{0,N}\in K$.
Furthermore, given $R>0$, let
\begin{align}
K_{R}:= \Big\{ \u_{0}\,:\, \sup_{N\in \N}\big( \| \Pi_{\leq N} \u_{0}\|_{\CC^{\al}}+\| Q_{s,N}(\pi_1 \u_{0})\|_{H^{\al-s}} \big)\leq R\Big\}\label{KR}
\end{align}
and $K_{R,N}:=\Pi_{\leq N} K_{R}$.
Then, $K_{R,N}$ is pre-compact in $\R^{2(|\Ld_{N}^{\ast}|+1)}$ and 
\begin{align}
\sup_{0\leq t \leq \tau(K_{R,N},N)}\| g^{N}_{t}(\u_0)  \ind_{K_{R,N}}(\Pi_{\leq N}\u_0) \|_{L^{p}( \vec{\nu}_{s,N})}^{p}  \leq c_0 e^{ C(R)(p\tau(K_{R,N},N))^2}
\label{gbd}
\end{align}
for all $1\leq p <\infty$, and where the constants $c_0, C(R)$ do not depend on $N\in 2^{\N_0}$.
\end{lemma}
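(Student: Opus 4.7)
\smallskip

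The plan is to prove the three assertions separately, starting from the identity \eqref{gnusN}. The key structural input is the decomposition $\vec{\nu}_{s,N}=\vec{\rho}_{s,N}\otimes \vec{\mu}_{s,N}^{\perp}$ (available because $R_{s,N}$ depends only on $u$) together with the decoupling of the truncated flow \eqref{NLWvectrunc} into the low-frequency SDE \eqref{NLWuN} and the linear high-frequency SDE \eqref{NLWuNperp}, driven by the independent noises $\Pi_{\leq N}\xi$ and $\Pi_{>N}\xi$. Using the already-recalled invariance of $\vec{\mu}_{s,N}^\perp$ under \eqref{NLWuNperp} together with the Fokker--Planck equation \eqref{FP}, which gives $(\wt{\mathcal{P}}_t)^*\vec{\rho}_{s,N} = g_t^N\,\vec{\rho}_{s,N}$ after dividing the density of $\gamma_{t,N}$ by the density $\wt Z_{s,N}^{-1}e^{-E_{s,N}}$ of $\vec{\rho}_{s,N}$ with respect to $L_N$, the identity \eqref{gnusN} follows by tensoring with $\vec{\mu}_{s,N}^\perp$ (noting that $g_t^N$ depends only on the low-frequency part by the convention below \eqref{rhosN}). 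For the short-time bound \eqref{gtNbd}, the plan is to use \eqref{ht0} and \eqref{gtN} to compute $g_0^N\equiv 1$ and $\partial_t g_t^N|_{t=0} = -\{H,\EE_{s,N}\}$, and then apply a first-order Taylor expansion with integral remainder using the $C^{1,2}$ smoothness from Lemma~\ref{LEM:htN}, choosing $\tau(K,N)>0$ small enough that the second-order remainder is dominated by $t$ on the compact set $K$.

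Pre-compactness of $K_{R,N}\subset\R^{2(|\Ld_N^\ast|+1)}$ is immediate from boundedness in a finite-dimensional space, so the crux is the $L^p$-bound \eqref{gbd}. Starting from \eqref{gtNbd}, the elementary inequality $(1+x)^p\leq e^{px}$ for $x\geq 0$ yields $|g_t^N|^p \leq e^{pt}\,e^{pt|\{H,\EE_{s,N}\}(\u_{0,N})|}$ on $K_{R,N}$. The central observation is that by \eqref{HEbracket} one can write
\begin{equation*}
\{H,\EE_{s,N}\}(\u_{0,N}) = \jb{v_N,\, \Xi_N(u_N)}, \qquad \Xi_N(u_N):= \jb{\nb}^{s}\bigl(3u_N^2\,\jb{\nb}^s u_N - \jb{\nb}^s(u_N^3)\bigr) + 3\,Q_{s,N}(u_N)\,u_N,
\end{equation*}
which is linear in $v_N$ and depends on $u$ only through $u_N=\Pi_{\leq N}u$. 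Since $R_{s,N}$ depends only on $u$, the $u$- and $v$-marginals of $\vec{\nu}_{s,N}$ are independent, with the $v$-marginal the centered Gaussian of covariance $\jb{\nabla}^{-2s}$. Integrating $v$ first via the Gaussian moment-generating function gives, pointwise in $u$,
\begin{equation*}
\int e^{\pm pt\,\jb{v_N, \Xi_N(u_N)}}\,d(\pi_2)_{\ast}\vec{\mu}_s \;=\; \exp\Bigl(\tfrac{(pt)^2}{2}\,\|\Pi_{\leq N}\Xi_N(u_N)\|_{H^{-s}}^2\Bigr),
\end{equation*}
and applying $e^{|x|}\leq e^x + e^{-x}$ then reduces the problem to the uniform (in $N$) estimate $\|\Xi_N(u_N)\|_{H^{-s}}\leq C(R)$ on the set where $\|u_N\|_{C^\al}\leq R$ and $\|Q_{s,N}(u_N)\|_{H^{\al-s}}\leq R$.

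The hard part is precisely this uniform $H^{-s}$-bound on $\Xi_N(u_N)$: neither $\jb{\nb}^{2s}(u_N^3)$ nor $\jb{\nb}^s(u_N^2\,\jb{\nb}^s u_N)$ is individually controlled in $H^{-s}$ by $\|u_N\|_{C^\al}^3$ when $\al<s$, and one must exploit the cancellation between them. This is exactly what the commutator estimate Proposition~\ref{PROP:com} provides, yielding $\|3u_N^2\,\jb{\nb}^s u_N - \jb{\nb}^s(u_N^3)\|_{L^2}\les \|u_N\|_{C^{s-\eps}}^3\leq R^3$ for any $\eps\in(s-\al,\tfrac14\min(s,1))$, which is nonempty under the hypothesis $s-\al<\tfrac13\min(s,1)$. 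The remaining term $Q_{s,N}(u_N)\,u_N$ is estimated in $H^{\al-s}\subset H^{-s}$ by $R^2$ using \eqref{prod1}, whose hypothesis is also compatible with $s-\al<\tfrac13\min(s,1)$. Combining these bounds gives $\|\Xi_N(u_N)\|_{H^{-s}}\leq C(R)$ uniformly in $N$; integrating the resulting exponential over $u$ using $Z_{s,N}^{-1}\int e^{-R_{s,N}(u)}\,d\vec{\mu}_{s,N}^{(u)}=1$ then produces $\int_{K_{R,N}}|g_t^N|^p\,d\vec{\nu}_{s,N} \leq 2\,e^{pt+C(R)(pt)^2/2}$. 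Finally, the elementary inequality $a+Ca^2\leq 2+2Ca^2$ for $a\geq 0$ absorbs the linear term and produces the bound $c_0\,e^{C'(R)(p\tau)^2}$ uniformly in $N$, as claimed.
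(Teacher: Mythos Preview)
Your proposal is correct and follows essentially the same approach as the paper: the tensor decomposition $\vec{\nu}_{s,N}=\vec{\rho}_{s,N}\otimes\vec{\mu}_{s,N}^{\perp}$ together with invariance of $\vec{\mu}_{s,N}^{\perp}$ for \eqref{gnusN}, Taylor expansion via the $C^{1,2}$ regularity from Lemma~\ref{LEM:htN} for \eqref{gtNbd}, and the Gaussian $v$-integration of the linear-in-$v$ Poisson bracket \eqref{HEbracket} combined with Proposition~\ref{PROP:com} and \eqref{prod1} for \eqref{gbd}. The only minor variation is that you integrate directly against $\vec{\nu}_{s,N}$, using that $u$ and $v$ remain independent under this measure since $R_{s,N}$ depends only on $u$, whereas the paper first applies Cauchy--Schwarz to pass from $\vec{\nu}_{s,N}$ to $\vec{\mu}_{s,N}$ before integrating out $v$; the two routes are otherwise identical and yield the same bound.
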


\begin{proof}
We first show \eqref{gnusN}. Recalling \eqref{nusN} and \eqref{mudecomp}, we have 
\begin{align}
d\vec{\nu}_{s,N}  =\vec{\rho}_{s,N}\otimes \vec{\mu}_{s,N}^{\perp}. \label{nusNdecomp}
\end{align}
Let $A \subseteq \Pi_{\leq N}X^{\al}$ and $B\subseteq \Pi_{>N}X^{\al}$ be measurable sets. 
Then, as $\Pi_{\leq N} \xi$ and $\Pi_{>N}\xi$ are independent and, for fixed $\u_0$, $\Pi_{\leq N}\Phi_{t}^{N}(\u_0,\xi)$ and $\Pi_{>N} \Phi^{\text{lin}}_{t}(\u_0,\xi)$ depend only on $\Pi_{\leq N} \xi$ and $\Pi_{>N}\xi$, respectively, the random variables $\Pi_{\leq N}\Phi_{t}^{N}(\u_0,\xi)$ and $\Pi_{>N} \Phi^{\text{lin}}_{t}(\u_0,\xi)$ are independent. Thus, using \eqref{nusNdecomp} and the invariance of $\vec{\mu}_{s,N}^{\perp}$ under the linear high-frequency flow $\Pi_{>N} \Phi^{\text{lin}}_{t}(\u_0,\xi)$, we obtain
\begin{align*}
\int &\ind_{A}(\Pi_{\leq N}\u_0) \ind_{B}(\Pi_{>N}\u_0) d(\Pt{t}^{N})^{\ast}(\vec{\nu}_{s,N})  \\
&   = \int \E\big[   \ind_{A}( \Pi_{\leq N}\Phi_{t}^{N}(\u_0,\xi)) \ind_{B}( \Pi_{>N} \Phi_{t}^{\text{lin}}(\u_0,\xi))\big]  d\vec{\nu}_{s,N} \\
& = \int \E[ \ind_{A}( \Pi_{\leq N}\Phi_{t}^{N}(\u_0,\xi))  ]  \E[\ind_{B}( \Pi_{>N} \Phi_{t}^{\text{lin}}(\u_0,\xi))] d\vec{\nu}_{s,N} \\
& = \int \E[ \ind_{A}( \Pi_{\leq N}\Phi_{t}^{N}(\u_0,\xi))  ] \int \E[\ind_{B}( \Pi_{>N} \Phi_{t}^{\text{lin}}(\u_0,\xi))] d\vec{\nu}_{s,N}^{\perp} d\vec{\rho}_{s,N} \\
&  = \int \ind_{B}(\Pi_{> N}\u_0) \int \E[ \ind_{A}( \Pi_{\leq N}\Phi_{t}^{N}(\u_0,\xi))  ]  d\vec{\rho}_{s,N} d\vec{\mu}_{s,N}^{\perp} \\
& = \int \ind_{A}(\Pi_{\leq N}\u_0) \ind_{B}(\Pi_{>N}\u_0) g_{t}^{N}(\u_0) d\vec{\nu}_{s,N}.
\end{align*}
As this holds for all such sets $A$ and $B$, we conclude \eqref{nusNdecomp}.

We now move onto the latter claims in the lemma.
By Lemma~\ref{LEM:htN} and \eqref{gtN}, $g_{t}^{N}\in C^{1,2}([0,\infty)\times \R^{2(|\Ld_{N}^{\ast}|+1)})$ and satisfies $g_{0}^{N}(\u_{0,N})=1$ and $\dt g_{t}^{N}(\u_{0,N})\vert_{t=0} =  -\{H,\EE_{s,N}\}(\u_{0,N})$. As $\dt g_{t}(\u_{0,N})$ is uniformly continuous on $[0,1]\times K$, there exists $0<\tau(K,N)\leq 1$ such that for all $0\leq t \leq \tau(K,N)$, and $\u_{0,N}\in K$,
\begin{align}
 |\dt g_{t}^{N}(\u_{0,N}) - \dt g_{t}^{N}(\u_{0,N})\vert_{t=0}| \leq 1. \label{gtNbd1}
\end{align}
For $\u_{0,N}\in K$, and $0\leq t\leq \tau(K,N)$, Taylor's theorem implies
\begin{align*}
g_{t}^{N}(\u_{0,N})= 1+ t \big[ (\dt g_{t}^{N}(\u_{0,N}))\vert_{t=0}+(\dt g_{t}^{N}(\u_{0,N}))\vert_{t=t'}-(\dt g_{t}^{N}(\u_{0,N}))\vert_{t=0}   \big],
\end{align*}
for some $0\leq t'\leq \tau(K,N)$. Then, \eqref{gtNbd} follows by using \eqref{gtNbd1}.

We move onto the claims regarding the set $K_{R,N}$. Since the $\CC^{\al}$ norm controls the Fourier coefficients of $\u_{0,N}$, $K_{R,N}$ is bounded, and hence pre-compact. We now show the bound \eqref{gbd}. Let $\tau_{R}:=\tau(K_{R,N},N)$.
From \eqref{gtNbd}, the inequality $1+x\leq e^{x}$ for any $x\in \R$, \eqref{unifbd}, \eqref{mudecomp} and Cauchy-Schwarz, we have
\begin{align}
\| g^{N}_{t}(\u_{0}) & \ind_{K_{R,N}}(\Pi_{\leq N}\u_{0}) \|_{L^{p}( \vec{\nu}_{s,N})}^{p}\notag \\
 & \leq \int |1+\tau_{R} ( \{H,\EE_{s,N}\}(\u_{0,N})+1)    |^{p} \ind_{K_{R,N}}(\u_{0,N}) d\vec{\nu}_{s,N} \notag \\
 & \leq \int e^{p \tau_{R}( \{H,\EE_{s,N}\}(\u_{0,N})+1)}\ind_{K_{R,N}}(\u_{0,N}) d\vec{\nu}_{s,N}  \notag \\
 & \leq Z_{s,N}^{-1} e^{p\tau_{R}} \| e^{-R_{s,N}(\u)}\|_{L^{2}(\vec{\mu}_{s})} \bigg( \int  e^{2 p\tau_{R} \{H,\EE_{s,N}\}(\u_{0,N})}\ind_{K_{R,N}}(\u_{0,N}) d\vec{\mu}_{s,N} \bigg)^{\frac 12}. \label{cs1}
\end{align}
Recall that for $\phi \in C^{\infty}(\T^2)$, the random variable $\jb{v_{N}, \phi}$ is a centered complex-valued Gaussian with variance $\| \Pi_{\leq N}\phi\|_{H^{-s}(\T^2)}^{2}$. 
Moreover from \eqref{HEbracket}, $\{H,\EE_{s,N}\}$ can be written as $\jb{\jb{\nb}^{s}v_N,  \mathcal{W}(u_N)}$, where
\begin{align*}
\mathcal{W}(u_N)= 3 [ \jb{\nb}^s u_N \cdot u_N^2]-\jb{\nb}^{s}(u_N^3) + 3 \jb{\nb}^{-s}(Q_{s,N}(u_N)u_N).
\end{align*}
Thus, from \eqref{HEbracket} and the independence of the random variables $v_{N}$ and $u_{N}$, we have
\begin{align*}
&\int  e^{2 p \tau_{R} \{H,\EE_{s,N}\}(\u_{0,N})}\ind_{K_{R,N}}(\u_{0,N}) d\vec{\mu}_{s,N}\\
 & \leq  \int \ind_{\pi_1 K_{R,N}}(u_{N})   \int  e^{2 p \tau_{R} \{H,\EE_{s,N}\}(\u_{0,N})}d{\mu}_{s,N} (v_N)   d\mu_{s+1,N}(u_N)  \\
 & = \int \ind_{\pi_1 K_{R,N}}(u_{N})e^{ 2(p\tau_{R})^2 \|\mathcal{W}(u_N)\|_{L^2}^{2}  } d\mu_{s+1,N}(u_N).
\end{align*}
It follows from \eqref{prod1}, \eqref{comL2}, and the definition of $K_{R,N}$, that there exists $C>0$ such that 
 \begin{align*}
 \|\mathcal{W}(u_N)\|_{L^2}^{2} \leq C(R^6+R^4).
\end{align*}
for $u_{N}\in  \pi_1 K_{R,N}$. Inserting this bound back into \eqref{cs1} and using \eqref{ZNlower} yields 
\eqref{gbd}.
\end{proof}

\subsection{Lifted Markov semigroups}\label{SEC:lifts}
For $\eps>0$ and $0<\al<s$, we define the space
\begin{align*}
\mathcal{Z} = \{ \xi \in C^{-1/2-\eps}_{t} C^{-1-\eps}_{x}([0,\infty)\times \T^2) \, :\,  \stick_{t}(\xi) \in C([0,\infty); Z^{\frac{\al+s}{2}})\},
\end{align*}
with the norm 
\begin{align*}
\| \xi\|_{\mathcal{Z}}:=  \|\xi\|_{C^{-1/2-\eps}([0,\infty); C^{-1-\eps})} + \|\stick_{t}(\xi)\|_{C([0,\infty);Z^{\frac{\al+s}{2}})}.
\end{align*}
It is clear from \eqref{sticktested} that $(\mathcal{Z}, \|\cdot\|_{\mathcal{Z}})$ is a Polish space.
 We view 
 $\Xi : = \text{Law}( \ind_{[0,\infty)\times \T^2}\xi)$ as a probability measure on $\mathcal{Z}$. More precisely, for any $F:\mathcal{Z}\to \R$, we have
\begin{align*}
\int F(u)d\Xi(u) =\E[ F(\xi)].
\end{align*}
In the following proposition, we show that for each $N\in  2^{\N_0}\cup\{\infty\}$, the Markov semigroups $\{\Pt{t}^{N}\}_{t\geq 0}$ lift to Markov semigroups on the product space $(\X \times \mathcal{Z}, \vec{\nu}_{s,N}\otimes \Xi)$. 

\begin{proposition}\label{PROP:lift}  
Given $N\in 2^{\N_0}\cup\{\infty\}$, there exists a Markov semigroup 
$\mathcal{U}^{N}_{t}$ on $\X \times \mathcal{Z}$ such that 
\begin{align}
(\Pt{t}^{N})^{\ast}\vec{\nu}_{s,N} \otimes \Xi  = (\mathcal{U}_{t}^{N})^{\ast}( \nu_{s,N}\otimes \Xi). \label{liftedmeas}
\end{align}
Moreover, the Radon-Nikodym derivative 
\begin{align}
f_{t}^{N}(\u_0, \xi)  : = \frac{d (\U_{t}^{N})^{\ast}( \vec{\nu}_{s,N}\otimes \Xi)}{d (\vec{\nu}_{s,N}\otimes \Xi)}. \label{ftNdef}
\end{align}
exists and is equal to $g_{t}^{N}(\u_0)$ for almost every $(\u_{0},\xi)$ with respect to $\vec{\nu}_{s,N}\otimes \Xi$, where $g_{t}^{N}$ was defined in \eqref{gtN}.
\end{proposition}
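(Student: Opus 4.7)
The plan is to define $\U^N_t$ as the deterministic map
$$\U^N_t(\u_0, \xi) := (\Phi^N_t(\u_0, \xi), R_t \xi),$$
with $R_t$ the time-translation appearing in \eqref{semigroup}. The semigroup property $\U^N_{t+h} = \U^N_h \circ \U^N_t$ is then immediate: on the first coordinate it is exactly the content of \eqref{semigroup} combined with adaptedness \eqref{adapt}, and on the second coordinate it reduces to $R_{t+h} = R_h\circ R_t$. Measurability of $\U^N_t$ and the fact that it sends $\X\times\mathcal{Z}$ into itself will reduce to checking that $R_t\xi\in\mathcal{Z}$ whenever $\xi\in\mathcal{Z}$, which one verifies by writing $\stick_\tau(R_t\xi)$ in terms of a time translation of $\stick_{t+\tau}(\xi)$ together with the linear evolution of $\stick_t(\xi)$.

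The main step is the pushforward identity \eqref{liftedmeas}. For product test functions $F(\u_0,\xi) = A(\u_0)B(\xi)$ with $A,B$ bounded and measurable, Fubini gives
$$\int F \circ \U^N_t \, d(\vec{\nu}_{s,N} \otimes \Xi) = \int d\vec{\nu}_{s,N}(\u_0) \int A(\Phi^N_t(\u_0, \xi)) \, B(R_t \xi) \, d\Xi(\xi).$$
By the adaptedness \eqref{adapt}, $\Phi^N_t(\u_0,\xi)$ is measurable with respect to $\xi|_{[0,t]}$, while $R_t\xi$ is measurable with respect to $\xi|_{[t,\infty)}$. Since these two $\sigma$-algebras are independent under $\Xi$ (by the defining property \eqref{whiteprop} of white noise), the inner integral factors. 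Applying \eqref{Ptdefn} to the first factor and using that $R_t\xi$ has the same law as $\xi$ on the second, the right-hand side becomes
$$\Big( \int A\, d(\Pt{t}^N)^{\ast}\vec{\nu}_{s,N}\Big) \cdot \Big( \int B\, d\Xi\Big) = \int F \, d\big((\Pt{t}^N)^{\ast}\vec{\nu}_{s,N} \otimes \Xi\big).$$
A monotone class argument then extends the identity to all bounded measurable $F$, giving \eqref{liftedmeas}.

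The Radon--Nikodym derivative is then immediate from \eqref{liftedmeas} combined with \eqref{gnusN} in Lemma~\ref{LEM:gtN}: the pushforward $(\U^N_t)^{\ast}(\vec{\nu}_{s,N}\otimes\Xi)$ equals $(g^N_t \vec{\nu}_{s,N})\otimes \Xi$, and hence $f^N_t(\u_0,\xi) = g^N_t(\u_0)$ for $(\vec{\nu}_{s,N}\otimes\Xi)$-almost every $(\u_0,\xi)$. The main technical point I anticipate having to dispatch is the well-definedness of $R_t$ as a map $\mathcal{Z}\to\mathcal{Z}$, which involves some bookkeeping on the stochastic convolution but nothing conceptually deep. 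In the case $N=\infty$, the Radon--Nikodym statement should be interpreted as only the pushforward identity \eqref{liftedmeas}, since $g^\infty_t$ has not been constructed --- indeed, the existence of such a density is essentially equivalent to the quasi-invariance in Theorem~\ref{THM:QI} itself.
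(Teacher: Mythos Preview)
Your proposal is correct and follows essentially the same route as the paper: define $\U^N_t$ via the deterministic map $(\u_0,\xi)\mapsto(\Phi^N_t(\u_0,\xi),R_t(\ind_{[t,\infty)}\xi))$ (your $R_t\xi$ coincides with this on $\mathcal{Z}$), verify the semigroup property from \eqref{semigroup} and \eqref{adapt}, and establish \eqref{liftedmeas} by testing against product functions and exploiting the independence of $\xi|_{[0,t]}$ and $\xi|_{[t,\infty)}$ together with the stationarity of white noise. Your observation that the density statement $f^N_t=g^N_t$ is only meaningful for finite $N$ is accurate, and your remark about checking $R_t:\mathcal{Z}\to\mathcal{Z}$ fills in a point the paper leaves implicit.
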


\begin{proof}
For $F\in \mathcal{B}_{b}(  \X \times \mathcal{Z})$, we define the operator $\U^N_{t}:\mathcal{B}_{b}(  \X \times \mathcal{Z})\to \mathcal{B}_{b}(  \X \times \mathcal{Z})$ by
\begin{align}
\U_{t}^{N}F (\u_0,\xi) = F(\Phi_{t}^{N}(\u_0; \xi), R_{t}( \ind_{[t,\infty)}\xi)),  \label{Udef}
\end{align}
and claim that $\{\U_{t}^{N}\}_{t\geq 0}$ is a Markov semigroup. We only need to check the semigroup property. Note that $R_{t}( \ind_{[t,\infty)}\xi)= \ind_{[0,\infty)}R_{t}\xi$ for any $t\in \R$. Given $t_1,t_2\geq 0$, and using \eqref{adapt} and \eqref{semigroup}, we have
\begin{align*}
\U_{t_1+t_2}^{N}F (\u_0 ,\xi) & =F( \Phi^{N}_{t_1+t_2}(\u_0;\xi), R_{t_1+t_2}(\ind_{[t_1+t_2,\infty)}\xi)) \\
& = F( \Phi^{N}_{t_1}( \Phi^{N}_{t_2}(\u_0; \ind_{[0,t_2]}\xi); R_{t_2}(\ind_{[t_2,\infty)}\xi)), R_{t_1} \ind_{[t_1,\infty)}R_{t_2}\xi) \\
&  =F( \Phi^{N}_{t_1}( \Phi^{N}_{t_2}(\u_0;\xi); \ind_{[0,\infty)}R_{t_2}\xi), R_{t_1} \ind_{[t_1,\infty)} \ind_{[0,\infty)}R_{t_2}\xi)\\
& = \U_{t_1}^{N}\big( F (\Phi^N_{t_2}(\u_0;\xi), R_{t_2}\ind_{[t_2,\infty)}\xi)\big) \\
& = (\U^{N}_{t_1}\circ \U_{t_2}^{N}) F (\u_0,\xi).
\end{align*}
This proves the semigroup property. 

We now show \eqref{liftedmeas} and, at the same time, that this density is equal to $g_{t}^{N}$. Note that for any $t >0$, $\ind_{[0,t]}\xi$ and $R_{t}\ind_{[t,\infty)}\xi$ are independent. Then, given $A\subseteq X^{\al}$ and $B\subseteq \mathcal{Z}$ be measurable and using \eqref{gnusN}, we have
\begin{align}
\begin{split}
\int \ind_{A\times B}(\u_0,\xi) d(\U_{t}^{N})^{\ast}(\vec{\nu}_{s,N}\otimes \Xi) & =\int \ind_{A}(\Phi_{t}^{N}(\u_0,\xi)) \ind_{B}(R_{t} \ind_{[t,\infty)}\xi) d\Xi d\vec{\nu}_{s,N} \\
&= \int \E\bigg[ \ind_{A}(\Phi^{N}_{t}(\u_0, \ind_{[0,t]}\xi)) \ind_{B}( R_{t} \ind_{[t,\infty)}\xi)  \bigg] d\vec{\nu}_{s,N} \\
& = \int \E\Big[ \ind_{A}(\Phi^{N}_{t}(\u_0, \ind_{[0,t]}\xi)) \Big] \E\Big[  \ind_{B}( R_{t} \ind_{[t,\infty)}\xi) \Big] d\vec{\nu}_{s,N} \\
& =  \int \ind_{A\times B}(\u_0, \xi)  d((\Pt{t}^{N})^{\ast}\vec{\nu}_{s,N}\otimes \Xi) \\
& = \int \ind_{A\times B}(\u_0, \xi) g_{t}^{N}(\u_0) d(\vec{\nu}_{s,N}\otimes \Xi).
\end{split} \label{FGcomp}
\end{align} 
As this identity is true for all such measurable sets $A$ and $B$, we get that $f_{t}^{N}= g_{t}^{N}$ a.e.
\end{proof}

In order to establish the absolute continuity result, we need to argue locally in the function spaces, which necessitates keeping track of the flow. 
For a fixed measurable set $K\subseteq X^{\al}$ and $t\geq 0$,  we define
\begin{align}
E_{t}^{N,K} : = \big\{ (\u_0, \xi) \,:\, \Phi^{N}_{t'}(\u_0,\xi)\in K \quad \text{for all} \,\, 0\leq t'\leq t\big\}\label{ENK}
\end{align}
These sets behave well with respect to the Markov semigroup $\U_{t}^{N}$ in the following sense.

\begin{lemma}
For every $t,\tau\geq 0$, and $K\subseteq \X$ measurable, it holds that 
\begin{align}
\ind_{E_{t+\tau}^{N,K}}(\u_0,\xi) = \mathcal{U}^{N}_{\tau}( \ind_{E_{t}^{N,K}})(\u_0 ,\xi)\ind_{E_{\tau}^{N,K}}(\u_0,\xi). \label{indineq}
\end{align}
\end{lemma}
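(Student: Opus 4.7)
The plan is to prove the identity by splitting the time interval $[0,t+\tau]$ into $[0,\tau]$ and $[\tau, t+\tau]$, and then using the semigroup identity \eqref{semigroup} together with adaptedness \eqref{adapt} from Lemma~\ref{LEM:Markov} to re-express the flow on the second subinterval as a flow of length $t$ starting from the shifted data. This second piece is exactly what $\mathcal{U}^{N}_{\tau}$ produces when applied to $\ind_{E_{t}^{N,K}}$.

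More concretely, I would first write
\begin{align*}
\ind_{E_{t+\tau}^{N,K}}(\u_0,\xi)
= \ind_{E_{\tau}^{N,K}}(\u_0,\xi)\cdot \ind\Big\{\Phi^{N}_{\tau+t'}(\u_0,\xi)\in K \text{ for all } 0\le t'\le t\Big\},
\end{align*}
which just uses that the event $\{\Phi^{N}_{t''}(\u_0,\xi)\in K \text{ for all } 0 \le t'' \le t+\tau\}$ is the intersection of the corresponding events on $[0,\tau]$ and on $[\tau, t+\tau]$. Next, I would apply \eqref{semigroup} with $h = t'$ and $t = \tau$, combined with \eqref{adapt} to remove the indicator $\ind_{[0,\tau]}$ from the first argument, to obtain
\begin{align*}
\Phi^{N}_{\tau+t'}(\u_0,\xi) = \Phi^{N}_{t'}\big(\Phi^{N}_{\tau}(\u_0,\xi), R_{\tau}\ind_{[\tau,\infty)}\xi\big)
\qquad \text{for every } 0\le t'\le t.
\end{align*}

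Substituting this into the previous display lets me identify
\begin{align*}
\ind\Big\{\Phi^{N}_{\tau+t'}(\u_0,\xi)\in K \text{ for all } 0\le t'\le t\Big\}
= \ind_{E_{t}^{N,K}}\big(\Phi^{N}_{\tau}(\u_0,\xi), R_{\tau}\ind_{[\tau,\infty)}\xi\big),
\end{align*}
which by the very definition of $\mathcal{U}^{N}_{\tau}$ in \eqref{Udef} equals $\mathcal{U}^{N}_{\tau}(\ind_{E_{t}^{N,K}})(\u_0,\xi)$. Multiplying by $\ind_{E_{\tau}^{N,K}}(\u_0,\xi)$ yields the claimed identity \eqref{indineq}.

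No genuine obstacle is anticipated: the argument is purely formal, relying only on set-theoretic manipulation and the already-established semigroup and adaptedness properties of $\Phi^{N}$. The only mild subtlety is to keep track of the $\ind_{[0,\tau]}\xi$ versus $\xi$ in the first argument of $\Phi^{N}_{\tau}$, which is handled by \eqref{adapt}, and of the identity $R_{\tau}(\ind_{[\tau,\infty)}\xi) = \ind_{[0,\infty)}R_{\tau}\xi$ already used in the proof of Proposition~\ref{PROP:lift}.
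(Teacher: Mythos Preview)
Your proof is correct and follows essentially the same approach as the paper: both arguments split the time interval $[0,t+\tau]$ into $[0,\tau]$ and $[\tau,t+\tau]$, then use the semigroup identity \eqref{semigroup} together with the definition \eqref{Udef} of $\mathcal{U}^{N}_{\tau}$ to identify the second factor. The only cosmetic difference is that the paper phrases the argument as a case analysis (showing both sides equal $1$ or both equal $0$), whereas you work directly with the product of indicators; the underlying content is identical.
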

\begin{proof}
Suppose that $(\u_0, \xi) \in E_{t+\tau}^{N,K}$.
Using \eqref{Udef}, \eqref{ENK}, and \eqref{semigroup}, we have 
\begin{align*}
\mathcal{U}^{N}_{\tau}( \ind_{E_{t}^{N,K}})(\u_0 ,\xi) = \ind_{E_{t}^{N,K}}( \Phi_{\tau}^{N}(\u_0,\xi), R_{\tau}(\ind_{[\tau,\infty)}\xi))= \ind_{\{ \Phi_{t'}^{N}(\u_0,\xi)\in K \,\, \forall \,\,t'\in [\tau,\tau+t]\}}(\u_0, \xi)=1.
\end{align*}
Moreover, by the inclusion
\begin{align}
E_{t_1}^{N,K}\subseteq E_{t_2}^{N,K} \label{ENKinc}
\end{align}
for any $0\leq t_2\leq t_1$,
we have that $\ind_{E_{\tau}^{N,K}}(\u_0,\xi) =1$. 

Next, assume that $(\u_0,\xi)\notin E_{t+\tau}^{N,K}$. Then, there is a $t_0 \in [0,\tau+t]$ such that $\Phi_{t_0}^{N}(\u_0, \xi)\notin K$. If $t_0 \in [0,\tau]$, then $\ind_{E_{\tau}^{N,K}}(\u_0,\xi)=0$. If instead $t_0\in (\tau,\tau+t]$, then $\mathcal{U}^{N}_{\tau}( \ind_{E_{t}^{N,K}})(\u_0 ,\xi)=0$. 
\end{proof}

We now obtain an analogue of Proposition~\ref{PROP:lift} for the localised measures $ (\vec{\nu}_{s,N}\otimes \Xi) \ind_{E_{t}^{N,K}}$.

\begin{lemma}\label{LEM:ftNK}
Let $N\in \N$, $K\subseteq \X$ be measurable, $t\geq 0$, and $E_t^{N,K}$ be as in \eqref{ENK}. Then, the measure $(\U_{t}^{N})^{\ast}( (\vec{\nu}_{s,N}\otimes \Xi) \ind_{E_{t}^{N,K}})$ has a density with respect to the probability measure $\vec{\nu}_{s,N}\otimes \Xi$ and we set 
\begin{align}
f_{t}^{N,K}(\u_0, \xi)  : = \frac{d (\U_{t}^{N})^{\ast}( (\vec{\nu}_{s,N}\otimes \Xi) \ind_{E_{t}^{N,K}})}{d (\vec{\nu}_{s,N}\otimes \Xi)}. \label{ftNK}
\end{align}
In particular, the following properties hold:

\smallskip
\noi
\textup{(i)} $f_{t}^{N,K}(\u_0,\xi) \geq 0$
for $\vec{\nu}_{s,N}\otimes \Xi$ almost every $(\u_0,\xi)$,

\smallskip
\noi
\textup{(ii)} $f_{t}^{N,K}(\u_0,\xi) = 0$ for almost every $(\u_0,\xi)\in (X^{\al}\setminus K)\times \mathcal{Z}$,

\smallskip
\noi
\textup{(iii)} $f_{t}^{N,K}(\u_0 ,\xi) \leq g_{t}^{N} (\u_0)$
for  almost every $(\u_0,\xi)$.
\end{lemma}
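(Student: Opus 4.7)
The plan is to deduce the entire statement from Proposition~\ref{PROP:lift} combined with the trivial pointwise bound $\ind_{E_t^{N,K}} \leq 1$. Since this bound yields the measure domination $(\vec{\nu}_{s,N}\otimes \Xi)\ind_{E_t^{N,K}} \leq \vec{\nu}_{s,N}\otimes \Xi$ on $\X \times \mathcal{Z}$, and the pushforward operation is monotone (because $((\U_t^N)^*\mu)(C) = \mu((\U_t^N)^{-1}(C))$ and $\mu$ itself is monotone), I would first observe that
$$(\U_t^N)^*\big((\vec{\nu}_{s,N}\otimes \Xi)\ind_{E_t^{N,K}}\big) \leq (\U_t^N)^*(\vec{\nu}_{s,N}\otimes \Xi) = g_t^N \cdot (\vec{\nu}_{s,N}\otimes \Xi),$$
where the equality is the content of Proposition~\ref{PROP:lift}. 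This comparison gives absolute continuity of the left-hand side with respect to $\vec{\nu}_{s,N}\otimes \Xi$, so the Radon-Nikodym derivative $f_t^{N,K}$ is well-defined; property (i) is automatic from the positivity of the measure on the left, and property (iii) follows from the displayed inequality by the standard measure-theoretic fact that $\phi\, d\lambda \leq \psi\, d\lambda$ implies $\phi \leq \psi$ $\lambda$-a.e.

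For property (ii), I would test against arbitrary measurable rectangles $A\times B$ with $A \subseteq \X \setminus K$ and $B\subseteq \mathcal{Z}$. Unfolding the definition of $f_t^{N,K}$ together with \eqref{Udef} gives
$$\int_{A\times B} f_t^{N,K}\, d(\vec{\nu}_{s,N}\otimes \Xi) = \int \ind_{A}(\Phi_t^N(\u_0,\xi))\, \ind_{B}(R_t\ind_{[t,\infty)}\xi)\, \ind_{E_t^{N,K}}(\u_0,\xi)\, d(\vec{\nu}_{s,N}\otimes \Xi).$$
By the definition \eqref{ENK} of $E_t^{N,K}$, the factor $\ind_{E_t^{N,K}}(\u_0,\xi)$ can only be nonzero when $\Phi_t^N(\u_0,\xi) \in K$, which is incompatible with $A \cap K = \emptyset$; hence the integrand vanishes identically and the integral is zero. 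Running $A$ and $B$ over a generating family for the product $\sigma$-algebra yields $f_t^{N,K} = 0$ a.e.\ on $(\X \setminus K) \times \mathcal{Z}$, which is property (ii). The argument is entirely measure-theoretic, and the only substantive input, namely the identification $f_t^N = g_t^N$ supplied by Proposition~\ref{PROP:lift}, is already in hand; I therefore do not anticipate any genuine obstacle, only careful bookkeeping of pushforwards and indicators.
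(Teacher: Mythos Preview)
Your proposal is correct and follows essentially the same approach as the paper: the paper also derives existence and (iii) by dominating $(\vec{\nu}_{s,N}\otimes\Xi)\ind_{E_t^{N,K}}$ by $\vec{\nu}_{s,N}\otimes\Xi$ and invoking Proposition~\ref{PROP:lift}, and proves (ii) by testing against $\ind_{X^{\al}\setminus K}(\u_0)$ and using that $(\u_0,\xi)\in E_t^{N,K}$ forces $\Phi_t^N(\u_0,\xi)\in K$. Your measure-domination phrasing of (iii) is marginally cleaner than the paper's chain of integral inequalities, but the content is identical.
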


\begin{proof}
The existence of \eqref{ftNK} and consequently (i) is immediate from the existence of $f_{t}^{N}$ which we showed in Proposition~\ref{PROP:lift}. We move onto showing the properties (ii) and (iii).
First, by the definition \eqref{ftNK} we have 
\begin{align}
\int F(\u_0,\xi) f_{t}^{N,K}(\u_0,\xi) d(\vec{\nu}_{s,N}\otimes \Xi) & = \int (\U^{N}_{t}F)(\u_0, \xi) \ind_{E_{t}^{N,K}}(\u_0,\xi) d(\vec{\nu}_{s,N}\otimes \Xi)  \label{propsftNK1}
\end{align}
for any $F\in \mathcal{B}_{b}(X^{\al}\times \mathcal{Z})$. 
By choosing $F=\ind_{X^{\al}\setminus K}(\u_0)$ in \eqref{propsftNK1} and noting that $E_{t}^{N,K}$ is disjoint from $\{\Phi^{N}_{t}(\u_0,\xi)\in (X^{\al}\setminus K)\times \mathcal{Z}\}$, we obtain
\begin{align}
\int \ind_{X^{\al}\setminus K}(\u_0)    f_{t}^{N,K}(\u_0,\xi) d(\vec{\nu}_{s,N}\otimes \Xi)=0.\label{propsftNK2}
\end{align} 
Property (ii) now follows from property (i) and that $f_{t}^{N,K}\geq 0$.

We move onto showing property (iii). For $F\in \mathcal{B}_{b}(X^{\al}\times \mathcal{Z})$ such that $F\geq 0$, \eqref{propsftNK1}, \eqref{ENKinc} and \eqref{Udef}, we have
\begin{align}
\begin{split}
\int F(\u_0,\xi) f_{t}^{N,K}(\u_0,\xi) d(\vec{\nu}_{s,N}\otimes \Xi) & = \int (\U^{N}_{t}F)(\u_0, \xi) \ind_{E_{t}^{N,K}}(\u_0,\xi) d(\vec{\nu}_{s,N}\otimes \Xi) \\
& \leq  \int (\U^{N}_{t}F)(\u_0, \xi) \ind_{K}(\Phi_{t}^{N}(\u_0,\xi)) d(\vec{\nu}_{s,N}\otimes \Xi) \\
& = \int \U^{N}_{t}(F \ind_{K})(\u_0, \xi) d(\vec{\nu}_{s,N}\otimes \Xi) \\
&=  \int F(\u_0,\xi) \ind_{K}(\u_0) f_{t}^{N}(\u_0,\xi)d(\vec{\nu}_{s,N}\otimes \Xi) \\
& =  \int F(\u_0,\xi) \ind_{K}(\u_0) g_{t}^{N}(\u_0)d(\vec{\nu}_{s,N}\otimes \Xi),
\end{split} \label{Fineq}
\end{align}
which proves (iii).
\end{proof}

\subsection{$L^p$ bounds on densities}

The key ingredient for the proof of Theorem~\ref{THM:QI} is the following long-time $L^p$ bounds on the truncated densities $f_{t}^{N,K}$.

\begin{proposition}[Uniform long-time bounds on densities]
Let $s>0$, $0<\al<s$ be as in Lemma~\ref{LEM:gtN}, $T>0$, $R>0$, and $K_{R}$ be as in \eqref{KR}. 
Then, it holds that
\begin{align}
\sup_{N\in 2^{\N_0}} \sup_{t\in [0,T]} \|f_{t}^{N,K_{R}} \|_{L^{q} (\vec{\nu}_{s,N}\otimes \Xi)} \leq C(R,T,q) \label{ftNKLp}
\end{align}
for any $1\leq q<\infty$.
\end{proposition}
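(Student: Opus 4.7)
The strategy is to iterate the short-time $L^q$-estimate of Lemma~\ref{LEM:gtN} over $[0,T]$, making essential use of the Markov semigroup structure of $\U_t^N$ from Proposition~\ref{PROP:lift} together with the identity \eqref{indineq}. The apparent difficulty is that the short-time bound \eqref{gbd} only holds on the $N$-dependent interval $[0, \tau_N]$ with $\tau_N := \tau(K_{R,N}, N) \in (0,1]$, and a priori $\tau_N$ may degenerate as $N \to \infty$. However, the constants in \eqref{gbd} are uniform in $N$ and depend on $\tau_N$ only quadratically, which is precisely what is needed to make the iteration converge uniformly.

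First, by Lemma~\ref{LEM:ftNK}(ii)–(iii) we have pointwise $f_t^{N,K_R}(\u_0,\xi) \le g_t^N(\u_0) \ind_{K_R}(\u_0)$, so \eqref{gbd} gives
\[ \|f_t^{N,K_R}\|_{L^q(\vec{\nu}_{s,N}\otimes \Xi)}^q \le \|g_t^N \ind_{K_{R,N}}\|_{L^q(\vec{\nu}_{s,N})}^q \le c_0 e^{C(R)(q\tau_N)^2} \qquad \text{for } t \in [0,\tau_N]. \]
To extend to $t \in [0,T]$, combine the semigroup property $\U_{t+\tau}^N = \U_t^N \circ \U_\tau^N$ with the relation $\ind_{E_{t+\tau}^{N,K}} = \U_\tau^N(\ind_{E_t^{N,K}}) \cdot \ind_{E_\tau^{N,K}}$ of \eqref{indineq}, and argue as in the proof of Lemma~\ref{LEM:ftNK} to obtain the key identity
\[ \int F\, f_{t+\tau}^{N,K_R}\, d(\vec{\nu}_{s,N}\otimes \Xi) = \int (\U_\tau^N F) \cdot \ind_{E_\tau^{N,K_R}} \cdot f_t^{N,K_R}\, d(\vec{\nu}_{s,N}\otimes \Xi) \]
for nonnegative $F \in \mathcal{B}_b(\X \times \mathcal{Z})$.

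Taking $F = (f_{t+\tau_N}^{N,K_R})^{q-1}$, applying Hölder's inequality with conjugate exponents $(q',q)$, and invoking the fundamental identity
\[ \int |\U_{\tau_N}^N G|^p \ind_{E_{\tau_N}^{N,K_R}}\, d(\vec{\nu}_{s,N}\otimes \Xi) = \int |G|^p f_{\tau_N}^{N,K_R}\, d(\vec{\nu}_{s,N}\otimes \Xi), \]
one extracts, after rearrangement and an additional Hölder estimate controlled by \eqref{gbd} applied to large $p$, a per-step multiplicative inequality of the form
\[ \|f_{t+\tau_N}^{N,K_R}\|_{L^q} \le \bigl(1 + C(R,q)\tau_N^2\bigr)^{1/(2q)} \|f_t^{N,K_R}\|_{L^q}. \]
Iterating this inequality $k \le \lceil T/\tau_N\rceil$ times with $t = k\tau_N$ yields
\[ \|f_T^{N,K_R}\|_{L^q} \le \bigl(1 + C(R,q)\tau_N^2\bigr)^{T/(2q\tau_N)} \cdot \|f_0^{N,K_R}\|_{L^q} \le e^{C(R,q)T/(2q)}, \]
which is bounded uniformly in $N$.

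The main obstacle is producing the multiplicative inequality with the sharp per-step constant $1 + O(\tau_N^2)$: a crude Hölder estimate on $\int (f_{t+\tau_N}^{N,K_R})^q f_{\tau_N}^{N,K_R}\,d\nu$ using only $\|f_{\tau_N}^{N,K_R}\|_{L^{q}}$ introduces slightly higher $L^p$-norms of $f_{t+\tau_N}^{N,K_R}$, which must then be controlled uniformly in a bootstrap. The resolution exploits the full strength of \eqref{gbd}: the exponential integrability $\|g_{\tau_N}^N \ind_{K_{R,N}}\|_{L^p}^p \le c_0 e^{C(R)(p\tau_N)^2}$ holds \emph{simultaneously for all} $1 \le p < \infty$ with $N$-independent constants, so one can run the bootstrap across a scale of $L^p$ norms while keeping the per-step correction of size $\tau_N^2$. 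Since the number of iterations $T/\tau_N$ is at most linear in $1/\tau_N$, the compound factor $(1 + C\tau_N^2)^{T/\tau_N}$ remains bounded as $\tau_N \to 0$, giving the desired uniform bound.
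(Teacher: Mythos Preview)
Your overall strategy---iterate the short-time bound \eqref{gbd} using the semigroup property of $\U_t^N$ and the cocycle identity \eqref{indineq}---is the right one, and your key identity
\[
\int F\, f_{t+\tau}^{N,K_R}\, d(\vec\nu_{s,N}\otimes\Xi)=\int (\U_\tau^N F)\,\ind_{E_\tau^{N,K_R}}\, f_t^{N,K_R}\, d(\vec\nu_{s,N}\otimes\Xi)
\]
is correct. The gap is in the claimed per-step inequality
\(
\|f_{t+\tau_N}^{N,K_R}\|_{L^q}\le(1+C(R,q)\tau_N^2)^{1/(2q)}\|f_t^{N,K_R}\|_{L^q}
\)
at \emph{fixed} $q$. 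After your H\"older step you are left with $\int (f_{t+\tau_N}^{N,K_R})^q f_{\tau_N}^{N,K_R}\,d\nu$, and the only available pointwise control on $f_{\tau_N}^{N,K_R}\le g_{\tau_N}^N$ is \eqref{gtNbd}, namely $g_{\tau_N}^N\le 1+\tau_N(\{H,\EE_{s,N}\}+1)$. The correction is therefore $O(\tau_N)$, not $O(\tau_N^2)$, and it carries a factor of $\{H,\EE_{s,N}\}$ that is \emph{not} uniformly bounded on $K_{R,N}$: $K_R$ only controls $\|v_N\|_{C^{\al-1}}$, while $\{H,\EE_{s,N}\}=\jb{\jb{\nabla}^s v_N,\mathcal W(u_N)}$ requires $\|v_N\|_{H^s}$. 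The bound \eqref{gbd} absorbs this precisely by integrating out the Gaussian variable $v_N$, which is why the constant $c_0$ there is strictly larger than $1$ and does not tend to $1$ as $\tau_N\to 0$. Any attempt to extract the extra power of $\tau_N$ via a further H\"older step reintroduces a strictly higher norm $\|f_{t+\tau_N}^{N,K_R}\|_{L^{qp'}}$, and your ``bootstrap across a scale of $L^p$ norms'' is not an argument but a restatement of the problem. (Note also that your last display is internally inconsistent: $(1+C\tau_N^2)^{T/(2q\tau_N)}\approx e^{CT\tau_N/(2q)}\to 1$, not $e^{CT/(2q)}$.)

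The paper resolves this by \emph{abandoning} the fixed-$q$ iteration. One first proves, by exactly the H\"older argument you describe, the two-parameter inequality
\[
\|f_{t+\tau}^{N,K}\|_{L^r}\le \|f_\tau^{N,K}\|_{L^p}\,\|f_t^{N,K}\|_{L^{r(p-1)/(p-r)}}^{1/p'}\qquad(1\le r<p),
\]
and then chooses $p:=1/(\eps_0\tau_N)$ so that $p\tau_N=\eps_0^{-1}$ is fixed. Iterating from $r_1=3q$ along a decreasing sequence $r_{j+1}=r_jp/(p+r_j-1)$, the per-step loss is $r_j-r_{j+1}\le r_j^2/p$, so after $J=\lceil T/\tau_N\rceil$ steps the total drift is at most $r_1^2 T\eps_0\le q$ for $\eps_0=\eps_0(q,T)$ small, leaving $r_J\ge 2q$. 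The compound short-time factor is $\|f_{\tau_N}^{N,K}\|_{L^p}^{\sum_k(p')^{-k}}\le\|f_{\tau_N}^{N,K}\|_{L^p}^{p}$, and the crucial point is that by \eqref{gbd} with $p\tau_N=\eps_0^{-1}$,
\[
\|f_{\tau_N}^{N,K}\|_{L^p}^J=\bigl(\|g_{\tau_N}^N\ind_{K_{R,N}}\|_{L^p}^p\bigr)^{J/p}\le\bigl(c_0 e^{C(R)\eps_0^{-2}}\bigr)^{T\eps_0},
\]
uniformly in $N$, since $J/p\le T\eps_0$. Thus the apparently $N$-dependent step size $\tau_N$ is harmless precisely because one lets the Lebesgue exponent drift rather than trying to freeze it.
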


\begin{proof} 
To simplify the notation, we write $K$ in place of $K_{R}$.
First, we will prove that 
\begin{align}
\begin{split}
\| f_{t+\tau}^{N,K}\|_{L^{r}(\vec{\nu}_{s,N}\otimes \Xi)}   \leq \big\| f_{\tau}^{N,K} \big\|_{L^p (\vec{\nu}_{s,N}\otimes \Xi)} \big\| f_{t}^{N,K}\big\|_{L^{\frac{r(p-1)}{p-r}}(\vec{\nu}_{s,N}\otimes \Xi)}^{\frac{1}{p'}}, 
\end{split}\label{densitydiff}
\end{align}
for any $0<\tau,t<T$ such that $t+\tau\leq T$ and $1\leq r<p<\infty$. 

Let $F\in C_{b}(\X\times \mathcal{Z})$. Then, by \eqref{indineq}, the semigroup property of $\U_{t}^{N}$, \eqref{ENKinc}, and H\"{o}lder's inequality,
\begin{align}
\int & F(\u_0 , \xi)  f_{t+\tau}^{N,K}(\u_0,\xi)d( \vec{\nu}_{s,N}\otimes \Xi) \notag \\
& = \int (\U^N_{t+\tau}F )(\u_0, \xi) \ind_{E^{N,K}_{t+\tau}}(\u_0, \xi) d ( \vec{\nu}_{s,N}\otimes \Xi) \notag   \\
& = \int \U^{N}_{\tau}( \U^{N}_{t}F)(\u_0,\xi)\U^{N}_{\tau}(\ind_{E^{N,K}_{t}})(\u_0,\xi) \ind_{E^{N,K}_{\tau}}(\u_0,\xi) d( \vec{\nu}_{s,N}\otimes \Xi) \notag \\
& = \int \U^{N}_{\tau}(( \U^{N}_{t}F)\ind_{E^{N,K}_{t}})(\u_0,\xi) \ind_{E^{N,K}_{\tau}}(\u_0,\xi) d( \vec{\nu}_{s,N}\otimes \Xi)  \notag \\
& = \int f_{\tau}^{N,K}(\u_0,\xi)(\U^{N}_{t}F)(\u_0,\xi)\ind_{E^{N,K}_{t}}(\u_0,\xi)d ( \vec{\nu}_{s,N}\otimes \Xi)  \notag \\
& \leq \big\| f_{\tau}^{N,K}\big\|_{L^p ( \vec{\nu}_{s,N}\otimes \Xi)  } \big\| (\U^{N}_{t}F)\ind_{E^{N,K}_{t}}\big\|_{L^{p'} ( \vec{\nu}_{s,N}\otimes \Xi)  }. \label{gronwall1}
\end{align}
Now using the definition \eqref{Udef} and \eqref{indineq}, we have
\begin{align}
\big\|  (\U^{N}_{t}F )\ind_{E^{N,K}_{t}}\big\|_{L^{q}(\vec{\nu}_{s,N}\otimes \Xi)}^{q} 
& = \int \U_{t}^{N}(|F|^{q})\ind_{E_{t}^{N,K}}(\u_0,\xi) d(\vec{\nu}_{s,N}\otimes \Xi) \notag\\
& = \int |F(\u_0,\xi)|^{q}  f_{t}^{N,K}(\u_0,\xi)d(\vec{\nu}_{s,N}\otimes \Xi). \label{gronwall2}
\end{align}
 for any $1\leq q<\infty$.
Using \eqref{gronwall2} in \eqref{gronwall1}, we then have
\begin{align*}
\int & F(\u_0 , \xi)  f_{t+\tau}^{N,K}(\u_0,\xi)d( \vec{\nu}_{s,N}\otimes \Xi)\\
& \leq \big\| f_{\tau}^{N,K}\big\|_{L^p ( \vec{\nu}_{s,N}\otimes \Xi)  }   \big\| F\cdot (f_{t}^{N,K})^{\frac{1}{p'}}\big\|_{L^{p'}( \vec{\nu}_{s,N}\otimes \Xi)}\\
& \leq \big\| f_{\tau}^{N,K}\big\|_{L^p ( \vec{\nu}_{s,N}\otimes \Xi)  }    
\|F\|_{L^{r'}(\vec{\nu}_{s,N}\otimes \Xi)} 
\big\| (f_{t}^{N,K})^{\frac{1}{p'}}\big\|_{L^{\frac{rp}{p-r}}  (\vec{\nu}_{s,N}\otimes \Xi)} \\
&=  \big\| f_{\tau}^{N,K}\big\|_{L^p ( \vec{\nu}_{s,N}\otimes \Xi)  }  
\|F\|_{L^{r'}(\vec{\nu}_{s,N}\otimes \Xi)} 
 \big\| f_{t}^{N,K} \big\|_{L^{\frac{r(p-1)}{p-r}}(\vec{\nu}_{s,N}\otimes \Xi)}^{\frac{1}{p'}}.
\end{align*}
Thus, by duality we establish \eqref{densitydiff}.

We now iterate \eqref{densitydiff}. We choose the time-step $0<\tau=\tau_{R}:=\tau(K_{R},N)\leq 1$ from Lemma~\ref{LEM:gtN} and break the time interval $[0,T]$ into $J=\left\lceil \frac{T}{\tau} \right\rceil $ many sub-intervals of width $\tau_{R}>0$. We let $t_{j}:= j\tau_{R}$ for $j=0,1,\ldots, J$, and note that $0\leq j\tau_{R} \leq T$. 
For $\eps_0>0$ to be chosen sufficiently small, we define $p:= \frac{1}{\eps_0 \tau_{R}}>1$ and the sequence $\{r_{j}\}_{j=1}^{J}$ recursively by
\begin{align*}
r_{1}=3q \quad \text{and} \quad   r_{j+1} = \frac{r_{j}\, p}{p+r_{j}-1}.
\end{align*}
Note that $1<r_{j+1}<r_{j}$ for all $j=1,\ldots, J-1$ and $r_{j}=\frac{r_{j+1}(p-1)}{p-r_{j+1}}$.
Using that the sequence $\{r_{j}\}_{j=1}^{J}$ is decreasing, we have 
\begin{align*}
r_{1}-r_{J} = \sum_{j=1}^{J-1} (r_{j}-r_{j+1}) =\sum_{j=1}^{J-1} \frac{r_j (r_{j}-1)}{p+r_{j}-1} \leq \frac{r_{1}^{2}}{p}J  \leq r_{1}^{2} \eps_0 T.
\end{align*}
Thus, by choosing 
\begin{align*}
\eps_0=\eps_0(q,T) =\min\big( \tfrac{1}{2q-1}, \tfrac{\min(9T,1)}{9T q}\big)>0,
\end{align*}
we ensure that $p\geq 2q-1\geq q$ and $r_{1}-r_{J}\leq q$. In particular, $r_{J}\geq 2q$.
For $j=1,\ldots,J-1$, \eqref{densitydiff} implies
\begin{align*}
\| f^{N,K}_{t_{j+1}}\|_{L^{r_{j+1}}(\vec{\nu}_{s,N}\otimes \Xi)} 
&\leq  \|f_{\tau_{R}}^{N,K} \|_{L^p (\vec{\nu}_{s,N}\otimes \Xi)} \big\| f_{t_j}^{N,K}\big\|_{L^{r_{j}}(\vec{\nu}_{s,N}\otimes \Xi)}^{\frac{1}{p'}}
\end{align*}
Therefore,
\begin{align*}
\| f^{N,K}_{t_{j}}\|_{L^{r_{j}}(\vec{\nu}_{s,N}\otimes \Xi)} \leq \|f_{\tau_{R}}^{N,K} \|_{L^p (\vec{\nu}_{s,N}\otimes \Xi)}^{\sum_{k=0}^{j} \frac{1}{(p')^{k}}},
\end{align*}
for each $j=1,\ldots, J$. As
\begin{align*}
\sum_{k=0}^{\infty} \frac{1}{(p')^{k}} = \frac{1}{1-\frac{1}{p}}=p,
\end{align*}
 H\"{o}lder's inequality then implies
%\begin{align*}
%\| f^{N,K}_{t_{j}}\|_{L^{2q}(\vec{\nu}_{s,N}\otimes \Xi)}  \leq \|f_{\tau_{R}}^{N,K} \|_{L^p (\vec{\nu}_{s,N}\otimes \Xi)}^{\sum_{k=0}^{j} \frac{1}{(p')^{k}}}
%\end{align*}
%for each $j=1,\ldots, J$. Therefore,
\begin{align}
\max_{j=1,\ldots, J} \| f^{N,K}_{t_{j}}\|_{L^{2q}(\vec{\nu}_{s,N}\otimes \Xi)}  \leq \max\big(1, \|f_{\tau_{R}}^{N,K} \|_{L^p (\vec{\nu}_{s,N}\otimes \Xi)}^{J}\big). \label{ftNKbd1}
\end{align} 
Now recalling that we chose $p$ such that $p\tau_{R} =\eps_0^{-1}$ and using Lemma~\ref{LEM:ftNK} (ii)-(iii), Proposition~\ref{PROP:lift}, and \eqref{gbd}, we have
\begin{align}
\begin{split}
\|f_{\tau_{R}}^{N,K_{R}} \|_{L^p (\vec{\nu}_{s,N}\otimes \Xi)}^{J} 
&\leq  \| g_{\tau_{R}}^{N} \ind_{K_{R}}(\u_0) \|_{L^{p}(\vec{\nu}_{s,N})}^{J}  \\
& \leq  \| g_{\tau_{R}}^{N} \ind_{\Pi_{\leq N}K_{R}}(\Pi_{\leq N}\u_0) \|_{L^{p}(\vec{\nu}_{s,N})}^{J}\\
&= \big(  \| g_{\tau_{R}}^{N} \ind_{K_{R,N}}( \Pi_{\leq N}\u_0) \|_{L^{p}(\vec{\nu}_{s,N})}^{p} \big)^{\frac{J}{p}} \\
& \leq  c_{0}^{\eps_0 T} \exp( C(R) T\eps_0^{-1}) =: \wt{C}(R,T,q),
\end{split} \label{fontauR}
\end{align}
uniformly in $N\in 2^{\N_0}$.  Returning this bound to \eqref{ftNKbd1}, establishes that
\begin{align}
\max_{j=1,\ldots, J} \| f^{N,K}_{t_{j}}\|_{L^{2q}(\vec{\nu}_{s,N}\otimes \Xi)} \leq \wt{C}(R,T,q) \label{discretebd}
\end{align}
uniformly in $N\in 2^{\N_0}$.  

Now we obtain $L^q$ bounds for \textit{every} $t\in [0,T]$. 
Let $t\in [0,T]$ and assume that $t$ is not an integer multiple of $\tau_{R}$. If $t\in (0,\tau_{R})$, then  Lemma~\ref{LEM:ftNK} (ii)-(iii), Proposition~\ref{PROP:lift}, \eqref{gbd} imply
\begin{align}
\sup_{t\in [0,\tau_{R}]} \|f_{t}^{N,K}\|_{L^p (\vec{\nu}_{s,N}\otimes \Xi)} \leq \wt{C}(R,T,q) \label{ctsbd}
\end{align}
uniformly in $N\in 2^{\N_0}$.
Now we assume that $t>\tau_{R}$. There is a $k\in \N$ (depending on $N$) such that $k\tau_{R} <t<(k+1)\tau_{R}$. By repeating the arguments leading to \eqref{densitydiff}, we find that 
\begin{align*}
\| f_{t}^{N,K}\|_{L^{r}(\vec{\nu}_{s,N}\otimes \Xi)} 
\leq \| f_{t-k\tau_{R}}^{N,K}\|_{L^p (\vec{\nu}_{s,N}\otimes \Xi)} \| f_{k\tau_{R}}^{N,K}\|_{L^{{\frac{r(p-1)}{p-r}}}(\vec{\nu}_{s,N}\otimes \Xi)}^{\frac{1}{p'}}
\end{align*}
for any $1\leq r<p<\infty$. We choose $r=\frac{2qp}{p+2q-1}$ so that $\frac{r(p-1)}{p-r}=2q$. Moreover, since $p\geq 2q-1$, we have $r\geq q$. It now follows from \eqref{discretebd}, \eqref{ctsbd}, and H\"{o}lder's inequality that
\begin{align}
\sup_{N\in 2^{\N_0}}\| f_{t}^{N,K}\|_{L^{q}(\vec{\nu}_{s,N}\otimes \Xi)}  \leq \wt{C}(R,T,q)^{2} \label{ctsbd2}
\end{align}
 As $t\in [0,T]$ was arbitrary, \eqref{ctsbd} and \eqref{ctsbd2} complete the proof of \eqref{ftNKLp}.
\end{proof}

\subsection{Proof of Theorem~\ref{THM:QI}}

Let $s>0$, $0<\al<s$ be as in \textup{Lemma}~\ref{LEM:gtN}. Let $T>0$. For $L\geq 1$, define the set 
\begin{align*}
D_{T,L} &: = \Big\{ (\u_0, \xi)\in  X^{\al}\times \mathcal{Z} \,:\, \\
& \sup_{M\in 2^{\N_0}}\sup_{t\in [0,T]} \Big( \| Q_{s,M}(\pi_{1} \Phi^{\text{lin}}_{t}(\u_0,\xi))\|_{H^{\al-s}} + \|\Pi_{\leq M}\Phi_{t}^{\text{lin}}(\u_0,\xi)\|_{\CC^{\al}} + \| \Pi_{\leq M}\stick_{t}(\xi)\|_{Z^{\al}}\Big) \leq L\Big\}.
\end{align*}
and let $0<\al'<\al$ be sufficiently close to $\al$ so that $(s,\al')$ also satisfy the conditions in \textup{Lemmas}~\ref{LEM:Calcontrol}, \ref{LEM:Qscontrol}, and \ref{LEM:gtN}.
Then, for any $F\in C_{b}(\H^{\al'}(\T^2))$ such that $F\geq 0$, Lemma~\ref{LEM:ASconvflow}, and Lemma~\ref{LEM:nus} imply
\begin{align}
\begin{split}
\int F(\u_0) d (\Pt{t})^{\ast} \vec{\nu}_s
& = \int (\Pt{t}F)(\u_0)  d\vec{\nu}_{s} \\
& =\int \E\big[ F(\Phi_{t}(\u_0,\xi)) \big]  d\vec{\nu}_{s} \\
&  =\lim_{N\to \infty} \int \E\big[ F(\Phi^{N}_{t}(\u_0,\xi)) \big]  d\vec{\nu}_{s}  \\
%& \leq \limsup_{N\to\infty}  \int \E\big[ F(\Phi^{N}_{t}(\u_0,\xi)) \big] \ind_{K_{R,N}}(\Pi_{\leq N}\u_0)d\vec{\nu}_{s} \\
& =\lim_{N\to\infty}  \int \E\big[ F(\Phi^{N}_{t}(\u_0,\xi)) \big] d\vec{\nu}_{s,N}  \\
& = \lim_{N\to\infty} \int F(\Phi_{t}^{N}(\u_0,\xi))  d(\vec{\nu}_{s,N}\otimes \Xi) \\
& \leq  \limsup_{N\to\infty} \int \ind_{D_{T,L}}(\u_0, \xi) F(\Phi_{t}^{N}(\u_0,\xi)) d(\vec{\nu}_{s,N}\otimes \Xi) \\
& \hphantom{X} +\limsup_{N\to\infty} \int \ind_{D_{T,L}^{c}}(\u_0,\xi) F(\Phi_{t}^{N}(\u_0,\xi)) d(\vec{\nu}_{s,N}\otimes \Xi).
\end{split} \label{Ptbds0}
\end{align}
It follows by Markov's inequality with \eqref{uN}, \eqref{momentsstick}, \eqref{uniflin}, \eqref{Qslin2} that 
\begin{align}
\bigg|\int \ind_{D_{T,L}^{c}}(\u_0,\xi) F(\Phi_{t}^{N}(\u_0,\xi))  d(\vec{\nu}_{s,N}\otimes \Xi)  \bigg| \leq  \frac{C(T,s,\al) \|F\|_{L^{\infty}}}{L}. \label{DTLc}
\end{align}
From Lemma~\ref{LEM:Calcontrol} and Lemma~\ref{LEM:Qscontrol}, we see that if $(\u_0,\xi)\in D_{T,L}$, then $(\u_0,\xi) \in E^{N,\wt{K}_{C(L,T)}}_{t}$, where $\wt{K}_{C(L,T)}$ is a set of the same form as $K_{R}$ in \eqref{KR} but with radius $C(L,T)$ and $\al$ replaced by $\al'$. Consequently, 
\begin{align}
\begin{split}
\int & \ind_{D_{T,L}}(\u_0, \xi) F(\Phi_{t}^{N}(\u_0,\xi)) d(\vec{\nu}_{s,N}\otimes \Xi) \\
& \leq \int \ind_{E^{N,\wt{K}_{C(L,T)}}_{t}}(\u_0,\xi) F(\Phi_{t}^{N}(\u_0,\xi))  d(\vec{\nu}_{s,N}\otimes \Xi)\\
& = \int F(\u_0) f_{t}^{N,\wt{K}_{C(L,T)}}(\u_0,\xi) d(\vec{\nu}_{s,N}\otimes \Xi).
\end{split} \label{Ptbds1}
\end{align}
It follows from \eqref{ftNKLp} that, up to a subsequence, 
\begin{align*}
f_{t}^{N,\wt{K}_{C(R,L,T)}}(\u_0,\xi) Z_{s,N}^{-1}e^{-R_{s,N}(\u_0)}
\end{align*}
 converges weakly in $L^2(d(\vec{\mu}_{s}\otimes \Xi))$ to some element $\mathfrak{f}_{t}^{\wt{K}_{C(L,T)}}\in L^2(d(\vec{\mu}_{s}\otimes \Xi))$. 
It then follows from \eqref{Ptbds0}, \eqref{DTLc}, and \eqref{Ptbds1} that
\begin{align}
\int F(\u_0) d (\Pt{t})_{\ast}\vec{\nu}_s \leq \int F(\u_0) \mathfrak{f}_{t}^{\wt{K}_{C(L,T)}}(\u_0,\xi) d(\vec{\mu}_{s}\otimes \Xi) + \tfrac{C(T,s,\al)\|F\|_{L^{\infty}}}{L}. \label{Ptbds2}
\end{align}
By density of continuous, bounded functions in
 $$L^1\big( (\Pt{t})_{\ast}\vec{\nu}_s+(\pi_{1})_{\ast} \mathfrak{f}_{t}^{\wt{K}_{C(L,T)}}(\u_0,\xi) d(\vec{\mu}_{s}\otimes \Xi)\big),$$
we can extend \eqref{Ptbds2} to hold for all $F\in \mathcal{B}_{b}(X^{\al})$. Let $E\subset X^{\al}$ be a Borel set such that $\vec{\mu}_{s}(E)=0$. Therefore, by \eqref{Ptbds2}, we obtain
\begin{align*}
\int \ind_{E}(\u_0) d (\Pt{t})_{\ast}\vec{\nu}_s  \leq \tfrac{C(T,s,\al)}{L},
\end{align*}
for every $L\ge 1$. By taking $L\to \infty$, we obtain that $(\Pt{t})_{\ast}\vec{\nu}_{s}(E)=0$. Since this holds for every $E$ with $\vec{\mu}_s(E) = 0$, we deduce that $(\Pt{t})_{\ast}\vec{\mu}_{s} \ll (\Pt{t})_{\ast}\vec{\nu}_{s} \ll \vec{\mu}_s$.
This completes the proof of Theorem~\ref{THM:QI}.

\begin{ackno}\rm
J.\,F. acknowledges support from Tadahiro Oh's ERC consolidator grant
	(grant no. 864138 ``SingStochDispDyn'').
\end{ackno}


\begin{thebibliography}{99}





\bibitem{BG}
N.~Barashkov, M.~Gubinelli,
{\it  A variational method for $\Phi^4_3$}, 
Duke Math. J. 169 (2020), no. 17, 3339--3415.

\bibitem{BG2}
N. Barashkov, M. Gubinelli,
{\it The $\Phi^{4}_{3}$ measure via Girsanov's theorem},
Electron. J. Probab. 26(2021), Paper No. 81, 29 pp.

\bibitem{BCD}
H. Bahouri, J.Y. Chemin, R. Danchin, {\it Fourier analysis and nonlinear partial differential equations}, Vol. 343. Springer Science \& Business Media, 2011.

\bibitem{BKRS}
V. I. Bogachev, N. V. Krylov, M. Röckner, S. V. Shaposhnikov, 
{\it Fokker-Planck-Kolmogorov equations},
Math. Surveys Monogr., 207
American Mathematical Society, Providence, RI, 2015. xii+479 pp.


  \bibitem{BD}
M.~Bou\'e, P.~Dupuis,
{\it A variational representation for certain functionals of Brownian motion},
Ann. Probab. 26 (1998), no. 4, 1641--1659.

  \bibitem{Bo96}
J. Bourgain, {\it Invariant measures for the 
2D-defocusing nonlinear Schr\"odinger equation},
Comm. Math. Phys. 176(2): 421-445 (1996).


%\bibitem{bdny}
%B. Bringmann, Y. Deng, A. Nahmod, H. Yue, {\it Invariant Gibbs measures for the three dimensional cubic nonlinear wave equation}, Invent. Math.236(2024), no.3, 1133--1411.


\bibitem{BTh}
N.~Burq, L.~Thomann,
{\it Almost sure scattering for the one dimensional nonlinear Schr\"{o}dinger equation},
Mem. Amer. Math. Soc. 296(2024), no.1480, vii+87 pp.

\bibitem{BuTz2}
N. Burq, N. Tzvetkov, {\it Random data Cauchy theory for supercritical wave equations. II. A global existence result}, Invent. Math. 173 (2008), no. 3, 477--496.

%\bibitem{Cameron}
%R.~Cameron, W.~Martin, 
%{\it Transformations of Wiener integrals under translations},
%Ann. of Math. (2) 45, (1944). 386--396.

\bibitem{cmw}
A.~Chandra, A.~Moinat, H.~Weber, \emph{A priori bounds for the $\Phi^4$ equation in the full sub-critical regime}, 
Arch. Ration. Mech. Anal.247(2023), no.3, Paper No. 48, 76 pp.



\bibitem{CT}
J. Coe, L. Tolomeo,
{\it Sharp quasi-invariance threshold for the cubic Szeg\"{o} equation}, 
	arXiv:2404.14950 [math.AP].
 
%\bibitem{Cru1}
%A.B.~Cruzeiro, {\it \'Equations diff\'erentielles ordinaires: non explosion et mesures quasi-invariantes,}
%(French) 
%%[Ordinary differential equations: nonexplosion and quasi-invariant measures] 
%J. Funct. Anal. 54 (1983), no. 2, 193--205.
%
%
%\bibitem{Cru2}
%A.B.~Cruzeiro, {\it \'Equations diff\'erentielles sur l'espace de Wiener et formules de Cameron-Martin non-lin\'eaires}, 
%(French) 
%%[Differential equations on the Wiener space and nonlinear Cameron-Martin formulas] 
%J. Funct. Anal. 54 (1983), no. 2, 206--227. 

\bibitem{dpd}
 G.~Da Prato, A.~Debussche, 
 {\it Two-dimensional Navier-Stokes equations driven by a space-time white
noise}, J. Funct. Anal. 196 (2002), no. 1, 180--210.
%
%\bibitem{DPZ0}
%G.~Da Prato, J.~Zabczyk, 
%{\it Ergodicity for infinite-dimensional systems,}
%London Mathematical Society Lecture Note Series, 229. Cambridge University Press, Cambridge, 1996. xii+339 pp.


\bibitem{DPZ1}
G.~Da Prato, J.~Zabczyk, 
{\it Stochastic equations in infinite dimensions},
Second edition. Encyclopedia of Mathematics and its Applications, 152. Cambridge University Press, Cambridge, 2014. xviii+493 pp. ISBN: 978-1-107-05584-1.




\bibitem{DT2020} 
A.~Debussche, Y.~Tsutsumi,
{\it  Quasi-invariance of Gaussian measures transported by the
	cubic NLS with third-order dispersion on $\mathbf T$}, 
J. Funct. Anal. 281 (2021), no. 3, 109032, 23 pp.

\bibitem{EGZ}
A. Eberle, A. Guillin, R. Zimmer, \emph{Couplings and quantitative contraction rates for Langevin dynamics}, Ann. Probab. 47 (2019), no. 4, 1982--2010.

\bibitem{Fer97} 
B. Ferrario, \emph{Ergodic results for stochastic Navier-Stokes equation}, Stochastics and Stochastics Reports 60 (1997), 271--288.

\bibitem{FM95}
F.~Flandoli, B.~Maslowski,
{\it Ergodicity of the 2-D Navier-Stokes equation under random perturbations},
Comm. Math. Phys. 172 (1995), no. 1, 119--141.

	
\bibitem{FS}
J.~Forlano, K.~Seong,
{\it Transport of Gaussian measures under the flow of one-dimensional fractional nonlinear Schr\"{o}dinger equations,} Comm. PDE, 47 (2022), no. 6, 1296--1337.

\bibitem{FT1}
J. Forlano, L. Tolomeo,
{\it On the unique ergodicity for a class of 2 dimensional stochastic wave equations}, Trans. Amer. Math. Soc., 377 (2024), 345--394.

\bibitem{FT2}
J. Forlano, L. Tolomeo,
{\it Quasi-invariance of Gaussian measures of negative regularity for fractional nonlinear Schr\"{o}dinger equations}, arXiv:2205.11453 [math.AP].


\bibitem{FT}
 J.~Forlano, W.~J.~Trenberth, 
 {\it On the transport of Gaussian measures under the one-dimensional fractional nonlinear Schr\"odinger equations,} 
	Ann. Inst. H. Poincar\'e Anal. Non Lin\'eaire 36 (2019), no. 7, 1987--2025.


\bibitem{GLT1}
G.~Genovese, R.~Luc\'{a}, N.~Tzvetkov,
	{\it Transport of Gaussian measures with exponential cut-off for Hamiltonian PDEs},
J. Anal. Math.150(2023), no.2, 737--787.

\bibitem{GLT2}
G.~Genovese, R.~Luc\'{a}, N.~Tzvetkov,
{\it Quasi-invariance of Gaussian measures for the periodic Benjamin-Ono-BBM equation}, 
Stoch. Partial Differ. Equ. Anal. Comput.11(2023), no.2, 651--684.


\bibitem{gh1}
M.~Gubinelli, M.~Hofmanov'a, \emph{Global solutions to elliptic and parabolic $\Phi^4$ models in Euclidean space}, Comm. Math. Phys. 368 (2019), no. 3, 1201--1266.


\bibitem{gh2}
M.~Gubinelli, M.~Hofmanov'a, \emph{A PDE construction of the Euclidean $\Phi^4_3$ quantum field theory}, Comm. Math. Phys. 384 (2021), 1--75.



\bibitem{GKO}
 M.~Gubinelli, H.~Koch, T.~Oh, {\it Renormalization of the two-dimensional stochastic nonlinear wave equations,}
 Trans. Amer. Math. Soc. 370 (2018), no 10, 7335--7359.


\bibitem{GKO2}
M. Gubinelli,  H. Koch, T. Oh, 
{\it  Paracontrolled approach to the three-dimensional stochastic nonlinear wave equation with quadratic nonlinearity},
J. Eur. Math. Soc. 26 (2024), no. 3, 817--874.

\bibitem{GKOT}
 M.~Gubinelli, H.~Koch, T.~Oh, L.~Tolomeo,
  {\it Global dynamics for the two-dimensional stochastic nonlinear wave equations,}
Int. Math. Res. Not. (2021), rnab084, https://doi.org/10.1093/imrn/rnab084.



\bibitem{GOTW}
T.~Gunaratnam, T.~Oh, N.~Tzvetkov, H.~Weber,
{\it  Quasi-invariant Gaussian measures for the nonlinear wave equation in three dimensions},
Probab. Math. Phys. 3 (2022), no. 2, 343--379.

 



\bibitem{HKN}
M. Hairer, S. Kusuoka, H. Nagoji,
{\it Singularity of solutions to singular SPDEs},
arXiv:2409.10037 [math.PR]



%\bibitem{HM1}
%M.~Hairer, J.~Mattingly,
%{\it Ergodicity of the 2D Navier-Stokes equations with degenerate stochastic forcing,}
%Ann. of Math. (2) 164 (2006), no. 3, 993--1032.
%
%\bibitem{HM2b}
%M. Hairer, J. Mattingly {\it A Theory of Hypoellipticity and Unique Ergodicity for Semilinear Stochastic PDEs},
%Electron. J. Probab. 16: 658--738 (2011).

%\bibitem{HM2}
%M.~Hairer, J.~Mattingly,
%{\it The strong Feller property for singular stochastic PDEs,}
%Ann. Inst. Henri Poincar\'e Probab. Stat. 54 (2018), no. 3, 1314--1340.
%
%\bibitem{HM3}
%M.~Hairer, J.C.~Mattingly, M.~Scheutzow, \emph{Asymptotic coupling and a general form of Harris theorem with applications to stochastic delay equations}. Probab. Theory Relat. Fields 149, 223--259 (2011).

\bibitem{KMV}
R. Killip, J. Murphy, M. Vi\c{s}an, {\it Invariance of white noise for KdV on the line}, Invent.
Math. 222 (2020), no. 1, 203--282.

\bibitem{Knezevitch}
A. Knezevitch,
{\it Transport of low regularity Gaussian measures for the 1d quintic nonlinear Schr\"{o}dinger equation}
	arXiv:2406.07116 [math.AP].


%
%\bibitem{Matt02}
%J.~C.~Mattingly,
%{\it Exponential convergence for the stochastically forced Navier-Stokes equations and other partially dissipative dynamics},
%Comm. Math. Phys. 230 (2002), no. 3, 421--462.
%
%\bibitem{MWS}
%J.~C.~Mattingly, E.~Weinan, Ya.~Sinai, 
%{\it Gibbsian dynamics and ergodicity for the stochastically forced Navier-Stokes equation,}
%Comm. Math. Phys. 224 (2001), no. 1, 83--106.

\bibitem{MS}
J. C. Mattingly, T. M. Suidan,
{\it The small scales of the stochastic Navier-Stokes equations under rough forcing},
J. Stat. Phys. 118 (2005), no. 1--2, 343--364.

\bibitem{mw2}
J.-C.~Mourrat, H.~Weber, \emph{The dynamic $\Phi^4_3$ model comes down from infinity}, Comm. Math. Phys. 356 (2017), no. 3, 673--753.


 \bibitem{mckean}
 H.\,P.~McKean, 
 {\it Statistical mechanics of nonlinear wave equations. IV. Cubic Schr\"odinger}, Comm. Math.
Phys. 168 (1995), no. 3, 479--491. {\it Erratum: Statistical mechanics of nonlinear wave equations. IV. Cubic
Schr\"odinger,} Comm. Math. Phys. 173 (1995), no. 3, 675.



\bibitem{OOT1}
T. Oh, M. Okamoto, L. Tolomeo,
{\it Focusing $\Phi^{4}{3}$ model with a Hartree nonlinearity},
to appear in Mem. Amer. Math. Soc.

\bibitem{OOT2}
T. Oh, M. Okamoto, L. Tolomeo,
{\it Stochastic quantisation of the $\Phi^{3}{3}$-model},
to appear in Mem. Eur. Math. Soc.


\bibitem{OS}
T.~Oh,  K.~Seong, 
{\it  Quasi-invariant Gaussian measures for the cubic fourth order 
	nonlinear Schr\"odinger equation 
	in negative Sobolev spaces,}
J. Funct. Anal. 281 (2021), no. 9, 109150, 49 pp.

\bibitem{OST}
T.~Oh, P.~Sosoe, N.~Tzvetkov,
{\it  An optimal regularity result on the quasi-invariant Gaussian measures for the cubic fourth order nonlinear 
	Schr\"odinger equation,} 
J. \'Ec. polytech. Math. 5
(2018), 793--841. 


\bibitem{OTT}
T.~Oh, Y.~Tsutsumi, N.~Tzvetkov,
{\it  Quasi-invariant Gaussian measures for the cubic nonlinear Schr\"odinger equation with third order dispersion}, C. R. Math. Acad. Sci. Paris 357 (2019), no. 4, 366--381. 

  \bibitem{OTz}
T.~Oh, N.~Tzvetkov,
{\it Quasi-invariant Gaussian measures for the cubic fourth order nonlinear Schr\"odinger equation}, 
Probab. Theory Related Fields 169 (2017), 1121--1168. 

%\bibitem{OTintro}
% T.~Oh, N.~Tzvetkov, 
% {\it On the transport of Gaussian measures under the flow of Hamiltonian PDEs },
%Séminaire Laurent Schwartz-Équations aux dérivées partielles et applications. Année 2015–2016, Exp. No. VI, 9 pp., Ed. Éc. Polytech., Palaiseau, 2017. 

\bibitem{OTz2}
T.~Oh, N.~Tzvetkov,
{\it Quasi-invariant Gaussian measures for the two-dimensional defocusing cubic nonlinear wave
	equation}, 
J. Eur. Math. Soc. 22 (2020), no. 6, 1785--1826.

\bibitem{OWZ}
T. Oh, Y. Wang, Y. Zine,
{\it Three-dimensional stochastic cubic nonlinear wave equation with almost space-time white noise},
Stoch. Partial Differ. Equ. Anal. Comput.10(2022), no.3, 898--963.

%\bibitem{PaWu}
%G. Parisi, Y. S. Wu, \emph{Perturbation theory without gauge fixing}, Scientia Sinica, Zhongguo Kexue, 24(4):483--496, 1981.

\bibitem{PTV} 
F.~Planchon, N.~Tzvetkov, N.~Visciglia, 
{\it  Transport of Gaussian measures by the 
	flow of the nonlinear Schr\"odinger equation}, 
Math. Ann. 378 (2020), no. 1-2, 389--423.



\bibitem{PTV2} 
F.~Planchon, N.~Tzvetkov, N.~Visciglia, 
{\it Modified energies for the periodic generalized KdV equation and applications},
Ann. Inst. H. Poincaré C Anal. Non Linéaire 40 (2023), no.4, 863--917.


\bibitem{STX}
P.~Sosoe, W.~Trenberth, T.~Xian,
{\it Quasi-invariance of fractional Gaussian fields by the nonlinear wave equation with polynomial nonlinearity}, 
Differential Integral Equations 33 (2020), no. 7-8, 393--430.

\bibitem{Simon}
B. Simon, {\it The $P(\Phi)2$ Euclidean (quantum) field theory}, Princeton Series in Physics. Princeton University
Press, Princeton, N.J., 1974. xx+392 pp


%\bibitem{Ramer}
%R.~Ramer, {\it On nonlinear transformations of Gaussian measures}, J. Funct. Anal. 15 (1974), 166--187.


\bibitem{RevuzYor}
D.~Revuz, M.~Yor, 
{\it Continuous martingales and Brownian motion.
Third edition}, Grundlehren der Mathematischen Wissenschaften [Fundamental Principles of Mathematical Sciences], 293. Springer-Verlag, Berlin, 1999. xiv+602 pp. ISBN: 3-540-64325-7.
	
\bibitem{ST}
C.~Sun, N.~Tzvetkov,
{\it Quasi-invariance of Gaussian measures for the 3d energy critical nonlinear Schr\"{o}dinger equation},
	arXiv:2308.12758 [math.AP].
	
	
	\bibitem{ThTz}
L. Thomann, N. Tzvetkov, {\it Gibbs measure for the periodic derivative nonlinear Schr\"{o}dinger equation},
Nonlinearity 23 (2010), no. 11, 2771--2791.
	

\bibitem{t18wave}
L.~Tolomeo, 
{\it Global well-posedness of the two-dimensional stochastic nonlinear wave equation on an unbounded domain}, 
Ann. Probab. 49(3): 1402-1426 (May 2021).

 
\bibitem{t18erg}
L.~Tolomeo, 
{\it 
Unique ergodicity for a class of stochastic hyperbolic equations with additive space-time white noise}, 
 Comm. Math. Phys. 377 (2020), no. 2, 1311--1347.


\bibitem{Tolomeo3}
L.~Tolomeo, 
{\it Ergodicity for the hyperbolic $P(\Phi)_2$-model}, 
arXiv:2310.02190 [math.PR].



\bibitem{Tz}
N.~Tzvetkov, {\it Quasi-invariant Gaussian measures for one dimensional Hamiltonian PDE's,}
Forum Math. Sigma 3 (2015), e28, 35 pp. 


\bibitem{Ust}
A.~ \"Ust\"unel,
{\it Variational calculation of Laplace transforms via entropy on Wiener space and applications},
J. Funct. Anal. 267 (2014), no. 8, 3058--3083.

\end{thebibliography}
\end{document}